\documentclass{amsart}

\usepackage{amssymb,latexsym,amsfonts,amsmath}
\usepackage{graphicx}
\usepackage{diagrams}
%\usepackage{mathtools}
%\usepackage{relsize}
%\usepackage{tikz}
%\usetikzlibrary{automata}
%\usetikzlibrary{shapes}
\usepackage{dsfont}

\topmargin  = 0.0 in
\leftmargin = 0.9 in
\rightmargin = 1.0 in
\evensidemargin = -0.10 in
\oddsidemargin =  0.10 in
\textheight = 8.5 in
\textwidth  = 6.6 in
\setlength{\parskip}{2mm}
\setlength{\parindent}{0mm}

\newtheorem{theorem}{Theorem}[section]
\newtheorem{lemma}[theorem]{Lemma}

\newtheorem{proposition}[theorem]{Proposition}
\newtheorem{corollary}[theorem]{Corollary}

\newtheorem{definition}[theorem]{Definition}

\newtheorem{remark}[theorem]{Remark}
\numberwithin{equation}{section}

\def\set#1{{\{#1 \}}}
\def\boxspan{\mathit{span}}
\def\params{{\mathsf{q}}}

\newcommand{\R}{{\mathbb{R}}}

\newcommand{\Ze}{{\mathbb Z}}

\newcommand{\N}{{\mathbb{N}}}

\DeclareMathOperator{\Tr}{Tr}
\DeclareMathOperator{\diff}{d}

\newcommand{\ra}{\rightarrow}
\newcommand{\sigalg}{\mathcal{F}}
\newcommand{\filtration}{\mathds{F}}

\newcommand{\ul}{\underline}
\newcommand{\ol}{\overline}
\newcommand{\Let}{:=}
\newcommand{\EE}{\mathds{E}}
\newcommand{\PP}{\mathds{P}}
\newcommand{\traj}[3]{#1_{#2#3}}

\begin{document}

\begin{abstract}
Symbolic approaches to the control design over complex systems employ the construction of finite-state models that are related to the original control systems, 
then use techniques from finite-state synthesis to compute controllers satisfying specifications given in a temporal logic, 
and finally translate the synthesized schemes back as controllers for the concrete complex systems.   
Such approaches have been successfully developed and implemented for the synthesis of controllers over non-probabilistic control systems. 
In this paper, we extend the technique to probabilistic control systems modeled by controlled stochastic differential equations. 
We show that for every stochastic control system satisfying a probabilistic variant of incremental input-to-state stability, 
and for every given precision $\varepsilon > 0$, 
a finite-state transition system can be constructed, 
which is $\varepsilon$-approximately bisimilar (in the sense of moments) to the original stochastic control system. 
Moreover, we provide results relating stochastic control systems to their corresponding finite-state transition systems in terms of probabilistic bisimulation relations known in the literature.   
We demonstrate the effectiveness of the construction by synthesizing controllers for stochastic control systems over rich specifications expressed in linear temporal logic.  
The discussed technique enables a new, automated, correct-by-construction controller synthesis approach for stochastic control systems, 
which are common mathematical models employed in many safety critical systems subject to structured uncertainty and are thus relevant for 
cyber-physical applications.  
\end{abstract}

\title[Symbolic control of stochastic systems via approximately bisimilar finite abstractions]{Symbolic control of stochastic systems via approximately bisimilar finite abstractions}

\author[M. Zamani]{Majid Zamani$^1$} 
\author[P. Mohajerin Esfahani]{Peyman Mohajerin Esfahani$^2$} 
\author[R. Majumdar]{Rupak Majumdar$^3$}
\author[A. Abate]{Alessandro Abate$^1$}
\author[J. Lygeros]{John Lygeros$^2$}
\address{$^1$Delft Center for Systems and Control, Delft University of Technology, 2628 CD, Delft, The Netherlands.}
\email{\{m.zamani,a.abate\}@tudelft.nl}
\urladdr{http://www.dcsc.tudelft.nl/$\sim$\{mzamani,aabate\}}
\address{$^2$Automatic Control Laboratory, ETH Z\"{u}rich, ETL I22, 8092 Z\"{u}rich, Switzerland.}
\email{\{mohajerin,lygeros\}@control.ee.ethz.ch}
\urladdr{http://control.ee.ethz.ch/$\sim$\{peymanm,jlygeros\}}
\address{$^3$Max Planck Institute for Software Systems, 67663 Kaiserslautern, Germany.}
\email{rupak@mpi-sws.org}
\urladdr{http://www.mpi-sws.org/$\sim$rupak}

\maketitle

\section{Introduction, Literature Background, and Contributions}
The design of controllers for complex 
%hybrid 
control systems with respect to general temporal specifications in a reliable, 
yet cost-effective way, is a grand challenge in cyber-physical systems research.  
One promising direction is the use of {\em symbolic models}:  
symbolic models are discrete and finite approximations of the continuous dynamics constructed in a way that controllers 
designed for the approximations can be refined to controllers for the original dynamics. 
The relationship between the continuity of the concrete models and the finiteness of their abstractions, 
as well as the interplay between continuous and discrete components that is necessary to show quantitative relations between the two models, 
clearly categorize symbolic approaches within the cyber-physical domain. 

When finite symbolic models exist and can be constructed,   
one can leverage the apparatus of finite-state reactive synthesis \cite{emerson,MalerPnueliSifakis95,Thomas95} 
towards the problem of designing hybrid controllers. 
The formal notion of approximation is captured using $\varepsilon$-approximate bisimulation relations
\cite{girard}, which guarantee that each trace of the continuous system can be
matched by a trace of the symbolic model up to a precision $\varepsilon$, and vice versa.

The effective construction of finite symbolic models has been studied extensively for non-probabilistic control systems. Examples include work on piecewise-affine and multi-affine systems with constant control distribution \cite{bh06,habets}, abstractions based on convexity of reachable sets for sufficiently small sampling time \cite{gunther1}, the use of incremental input-to-state stability \cite{girard2,majid4,pola,pola2,pola1}, non-uniform abstractions of nonlinear systems over a finite-time horizon \cite{tazaki}, and finally sound abstractions for unstable nonlinear control systems \cite{majid}. 
%\cite{bh06,girard2,habets,majid4,pola,pola2,pola1,gunther1,tazaki,majid}. 
%\red{[AA: can you elaborate on this list, commenting on the major assumptions underlying the articles (as done in the next sentence for a subset of the list)?]}
%For example, $\varepsilon$-approximate bisimilar symbolic models exist for non-probabilistic incremental input-to-state stable
%control systems, for any precision $\varepsilon > 0$, \cite{girard2,majid4,pola}.
Together with automata-theoretic controller synthesis algorithms \cite{MalerPnueliSifakis95,Thomas95}, 
effective symbolic models form the basis of controller synthesis tools such as \textsf{Pessoa} \cite{pessoa} and \textsf{TuLiP} \cite{tulip}.

However, much less is known about continuous {\em stochastic} control systems.
% 
% While the construction of finite abstractions has been studied extensively for deterministic control systems 
% \cite{bh06,girard2,habets,majid4,pola,pola2,pola1,gunther1,tazaki,majid}, 
% it has received much less attention for stochastic control systems.
Existing results for probabilistic systems include the construction of finite abstractions for continuous-time 
stochastic dynamical systems under contractivity assumptions \cite{abate},  
%\RM{what about Mireille Broucke's work?} 
for discrete-time stochastic hybrid dynamical systems endowed with certain continuity and ergodicity properties \cite{abate1}, 
and for discrete-time stochastic dynamical systems complying with a notion of bisimulation function \cite{azuma}.     
% THIS IS NON FINITE ND SYNTHESIS IS LIMITED: and for general continuous-time stochastic dynamical systems evolving over a finite time-horizon \cite{abate2}. 
While providing finite bisimilar abstractions, all the cited techniques are restricted to {\em autonomous} models
(i.e., with no control inputs).
As such, they are of interest for verification purposes, but fall short towards controller synthesis goals.   
On the other hand, for non-autonomous models there exist techniques to {\em check} if an infinite abstraction is 
formally related to a concrete stochastic control system via a notion of stochastic (bi)simulation function \cite{julius1}, 
however these results do not extend to the {\em construction} of approximations nor they deal with {\em finite} abstractions, 
and appear to be computationally tractable only in the autonomous case.  
Further, for specific temporal properties such as (probabilistic) invariance, 
there exist techniques to compute finite abstractions of discrete-time stochastic control systems \cite{AAPLS07}, 
however their generalization to general properties in linear temporal logic is not obvious, 
nor their applicability to continuous-time models.  
The work in \cite{sproston} provides algorithms for the veriÞcation and control problems restricted to probabilistic rectangular automata, 
in which random behaviors occur only over the discrete components -- this limits their application to models with continuous probability laws.  
The work in \cite{LAB09} presents a finite Markov decision process approximation of a continuous-time linear stochastic control systems for 
the verification of given temporal properties, 
however the relationship between abstract and concrete model is not quantitative. 
Along the same lines, 
classical discretization results in the literature \cite{KD01} offer approximations of stochastic control systems that 
are related to the concrete models only asymptotically, rather than according to formal bisimulation or simulation notions that are in the end required to ensure the correspondence of controllers for linear temporal logic specifications over model trajectories.     

Summing up, 
to the best of our knowledge there is no comprehensive work on the construction of finite bisimilar abstractions for 
continuous-time continuous-space stochastic control systems.      
This is unfortunate: 
these systems offer a natural modeling framework for cyber-physical systems operating in an uncertain or noisy environment, 
and automated controller synthesis methodologies can enable more reliable system development at lower costs and times. 

In this paper, we show the existence of $\varepsilon$-approximate bisimilar symbolic models (in the sense of moments) for continuous-time stochastic 
%sampled-data 
control systems satisfying a probabilistic version of the incremental input-to-state stability property \cite{angeli}, for any given parameter $\varepsilon > 0$.  
The symbolic models are finite if the continuous states lie within a bounded set.  
We also provide a simple way to construct the symbolic abstractions by quantizing the state and input sets. 
By guaranteeing the existence of an $\varepsilon$-approximate bisimulation relation among concrete and abstract models, 
we show that there exists a controller enforcing a desired specification on the symbolic model if and only if 
there exists a controller enforcing an $\varepsilon$-related specification on the original stochastic control system. 
The construction nicely generalizes results for non-probabilistic systems \cite{girard2,majid4,pola}, 
and reduces to these results in the special case of dynamics with no noise. 
Furthermore, we provide quantitative results precisely showing how the proposed $\varepsilon$-approximate bisimulation relations among concrete stochastic control systems and abstract symbolic models are related to known probabilistic bisimulation notions recently developed in the literature. 

%While the intuition for the construction is similar to that of non-probabilistic systems \cite{girard2},
%our proofs are more involved as expected.  

We finally illustrate our results on two case studies, where controllers are synthesized over (non-linear) stochastic control systems with respect to rich linear temporal logic specifications of practical relevance. 

\section{Stochastic Control Systems}\label{sec2}
\subsection{Notations} \label{II.A}
The identity map on a set $A$ is denoted by $1_{A}$. If $A$ is a subset of $B$ we denote by \mbox{$\imath_{A}:A\hookrightarrow B$} or simply by $\imath $ the natural inclusion map taking any $a \in A$ to \mbox{$\imath (a) = a \in B$}. The symbols $\N$, $\N_0$, $\Ze$, $\R$, $\R^+$ and $\R_0^+$ denote the set of natural, nonnegative integer, integer, real, positive, and nonnegative real numbers, respectively. The symbols $I_n$, $0_n$, and $0_{n\times{m}}$ denote the identity matrix, the zero vector and zero matrix in $\R^{n\times{n}}$, $\R^n$, and $\R^{n\times{m}}$, respectively. Given a vector \mbox{$x\in\mathbb{R}^{n}$}, we denote by $x_{i}$ the $i$--th element of $x$, and by $\Vert x\Vert$ the infinity norm of $x$, namely \mbox{$\Vert x\Vert=\max\{|x_1|,|x_2|,...,|x_n|\}$}, where $|x_i|$ denotes the absolute value of $x_i$. Given a matrix \mbox{$M=\{m_{ij}\}\in\R^{n\times{m}}$}, we denote by $\Vert{M\Vert}$ the infinity norm of $M$, namely, $\Vert{M}\Vert=\max_{1\leq{i}\leq{n}}\sum_{j=1}^m\vert{m_{ij}\vert}$, and by $\Vert M\Vert_F$ the Frobenius norm of $M$, namely, $\|M\|_F =\sqrt{\text{Tr}\left(MM^T\right)}$, where $\text{Tr}({P})=\sum_{i=1}^np_{ii}$ for any $P=\{p_{ij}\}\in\R^{n\times{n}}$. We denote by $\lambda_{\min}(A)$ and $\lambda_{\max}(A)$ the minimum and maximum eigenvalues of matrix $A$, respectively. The diagonal set $\Delta\subset\R^{2n}$ is defined as: $\Delta=\left\{(x,x) \mid x\in \R^n\right\}$.

The closed ball centered at $x\in{\mathbb{R}}^{n}$ with radius $\varepsilon$ is defined by \mbox{$\mathcal{B}_{\varepsilon}(x)=\{y\in{\mathbb{R}}^{n}\,|\,\Vert x-y\Vert\leq\varepsilon\}$}. A set $B\subseteq \R^n$ is called a 
{\em box} if $B = \prod_{i=1}^n [c_i, d_i]$, where $c_i,d_i\in \R$ with $c_i < d_i$ for each $i\in\set{1,\ldots,n}$.
The {\em span} of a box $B$ is defined as $\boxspan(B) = \min\set{ | d_i - c_i| \mid i=1,\ldots,n}$. Define the $\eta$-approximation $[B]_\eta=\{ b\in B\mid b_i = k_i \eta,~k_i\in\mathbb{Z},~i=1,\ldots,n\}$ for a box $B$ and $\eta \leq \boxspan(B)$.
Note that $[B]_{\eta}\neq\varnothing$ for any $\eta\leq\boxspan(B)$. 
Geometrically, for any $\eta\in{\mathbb{R}^+}$ with $\eta\leq\boxspan(B)$ and $\lambda\geq\eta$, the collection of sets 
\mbox{$\{\mathcal{B}_{\lambda}(p)\}_{p\in [B]_{\eta}}$}
is a finite covering of $B$, i.e., \mbox{$B\subseteq\bigcup_{p\in[B]_{\eta}}\mathcal{B}_{\lambda}(p)$}. 
By defining $[\R^n]_{\eta}=\left\{a\in \R^n\mid a_{i}=k_{i}\eta,~k_{i}\in\mathbb{Z},~i=1,\cdots,n\right\}$, the 
set \mbox{$\bigcup_{p\in[\R^n]_{\eta}}\mathcal{B}_{\lambda}(p)$} is a 
countable covering of $\R^n$ for any $\eta\in\R^+$ and $\lambda\geq\eta$. 
We extend the notions of $\boxspan$ and approximation to finite unions of boxes as follows.
Let $A = \bigcup_{j=1}^M A_j$, where each $A_j$ is a box.
Define $\boxspan(A) = \min\set{\boxspan(A_j)\mid j=1,\ldots,M}$,
and for any $\eta \leq \boxspan(A)$, define $[A]_\eta = \bigcup_{j=1}^M [A_j]_\eta$.

Given a set $X$, a function \mbox{$\mathbf{d}:{X}\times{X}\rightarrow\mathbb{R}_{0}^{+}$} is a metric on $X$ if for any $x,y,z\in{X}$, the following three conditions are satisfied: i) $\mathbf{d}(x,y)=0$ if and only if $x=y$; ii) $\mathbf{d}(x,y)=\mathbf{d}(y,x)$; and iii) (triangle inequality) $\mathbf{d}(x,z)\leq\mathbf{d}(x,y)+\mathbf{d}(y,z)$. 
Given a measurable function \mbox{$f:\mathbb{R}_{0}^{+}\rightarrow\mathbb{R}^n$}, the (essential) supremum of $f$ is denoted by $\Vert f\Vert_{\infty}$; we recall that \mbox{$\Vert f\Vert_{\infty}:=\text{(ess)sup}\{\Vert f(t)\Vert,t\geq0\}$}. A function $f$ is essentially bounded if $\Vert{f}\Vert_{\infty}<\infty$. For a given time $\tau\in\mathbb{R}^+$, define $f_{\tau}$ so that $f_{\tau}(t)=f(t)$, for any $t\in[0,\tau)$, and $f_{\tau}(t)=0$ elsewhere; $f$ is said to be locally essentially bounded if for any $\tau\in\mathbb{R}^+$, $f_{\tau}$ is essentially bounded. A continuous function \mbox{$\gamma:\mathbb{R}_{0}^{+}\rightarrow\mathbb{R}_{0}^{+}$}, is said to belong to class $\mathcal{K}$ if it is strictly increasing and \mbox{$\gamma(0)=0$}; $\gamma$ is said to belong to class $\mathcal{K}_{\infty}$ if \mbox{$\gamma\in\mathcal{K}$} and $\gamma(r)\rightarrow\infty$ as $r\rightarrow\infty$. A continuous function \mbox{$\beta:\mathbb{R}_{0}^{+}\times\mathbb{R}_{0}^{+}\rightarrow\mathbb{R}_{0}^{+}$} is said to belong to class $\mathcal{KL}$ if, for each fixed $s$, the map $\beta(r,s)$ belongs to class $\mathcal{K}_{\infty}$ with respect to $r$ and, for each fixed nonzero $r$, the map $\beta(r,s)$ is decreasing with respect to $s$ and $\beta(r,s)\rightarrow 0$ as \mbox{$s\rightarrow\infty$}. We identify a relation \mbox{$R\subseteq A\times B$} with the map \mbox{$R:A \rightarrow 2^{B}$} defined by $b\in R(a)$ iff \mbox{$(a,b)\in R$}. Given a relation \mbox{$R\subseteq A\times B$}, $R^{-1}$ denotes the inverse relation defined by \mbox{$R^{-1}=\{(b,a)\in B\times A:(a,b)\in R\}$}.

\subsection{Stochastic control systems\label{II.B}}

Let $(\Omega, \sigalg, \PP)$ be a probability space endowed with a filtration $\filtration = (\sigalg_s)_{s\geq 0}$ satisfying the usual conditions of completeness and right continuity \cite[p.\ 48]{ref:KarShr-91}. Let $(W_s)_{s \ge 0}$ be a $p$-dimensional $\filtration$-Brownian motion.

\begin{definition}
\label{Def_control_sys}A \emph{stochastic control system} is a tuple $\Sigma=(\mathbb{R}^{n},\mathsf{U},\mathcal{U},f,\sigma)$, where
\begin{itemize}
\item $\mathbb{R}^{n}$ is the state space;
%\item $\tau$ is the sampling time;
\item $\mathsf{U}\subseteq\R^m$ is an input set; 
\item $\mathcal{U}$ is a subset of the set of all measurable, locally essentially bounded functions of time from intervals of the form $[0,\infty[$ to $\mathsf{U}$; 
%contains piecewise constant curves of duration $\tau$, i.e.,
%\begin{small}
%\begin{align}
% \nonumber
% \mathcal{U}_\tau=\Big\{\upsilon:\R_0^+\to \mathsf{U}\mid \upsilon(t)=\upsilon((l-1)\tau), t\in[(l-1)\tau,l\tau[, l\in\N\Big\};
% \end{align}
% \end{small}
%\item $\mathcal{W}$ is the set of all standard Wiener processes taking values in $\R^q$;
\item $f:\R^n\times\mathsf{U}\rightarrow\R^n$ is a continuous function of its arguments satisfying the following Lipschitz assumption: there exist constants $L_x,L_u\in\R^+$ such that: $\Vert f(x,u)-f(x',u')\Vert\leq L_x\Vert x-x'\Vert +  L_u\Vert u-u'\Vert$ for all $x,x'\in\R^n$ and all $u,u'\in\mathsf{U}$;
\item $\sigma:\R^n\rightarrow\R^{n\times{p}}$ is a continuous function satisfying the following Lipschitz assumption: there exists a constant $Z\in\R^+$ such that: $\Vert\sigma(x)-\sigma(x')\Vert\leq Z\Vert{x}-x'\Vert$ for all $x,x'\in\R^n$.
\end{itemize}
\end{definition}

%Note that the results and definitions in Sections \ref{sec2}, \ref{sec3}, and \ref{symbolic} are still valid even if $\mathcal{U}_\tau$ contains any measurable and locally essentially bounded curve of time, whereas those in Section \ref{existence} can still be shown even if $\mathcal{U}_\tau$ contains curves which are Lipschitz continuous in each duration $\tau$. 
%However, for the digital systems considered in this work we deal directly with piecewise constant curves which are tailored to the abstraction and refinement procedures to be discussed. 

A stochastic process \mbox{$\xi:\Omega \times [0,\infty[ \rightarrow \mathbb{R}^{n}$} is said to be a \textit{solution process} of $\Sigma$ if there exists $\upsilon\in\mathcal{U}$ satisfying:
\begin{equation}
\label{eq0}
	\diff \xi= f(\xi,\upsilon)\diff t+\sigma(\xi)\diff W_t,
\end{equation}
 $\PP$-almost surely ($\PP$-a.s.), where $f$ is known as the drift, $\sigma$ as the diffusion, and again $W_t$ is Brownian motion. We also write $\xi_{a \upsilon}(t)$ to denote the value of the solution process at time $t\in\R_0^+$ under the input curve $\upsilon$ from initial condition $\xi_{a \upsilon}(0) = a$ $\PP$-a.s., in which $a$ is a random variable that is measurable in $\sigalg_0$. Let us remark that $\sigalg_0$ in general is not a trivial sigma-algebra,  
and thus the stochastic control system $\Sigma$ can start from a random initial condition.  
Let us emphasize that the solution process is uniquely determined, 
%up to indistinguishability, 
since the assumptions on $f$ and $\sigma$ ensure its existence and uniqueness \cite[Theorem 5.2.1, p.\ 68]{oksendal}. 

\section{A Notion of Incremental Stability}\label{sec3}
This section introduces a stability notion for stochastic control systems, 
which generalizes the notion of incremental input-to-state stability ($\delta$-ISS) \cite{angeli} for non-probabilistic control systems. 
The main results presented in this work rely on the stability assumption discussed in this section. 

\begin{definition}
\label{dISS}
A stochastic control system $\Sigma=(\mathbb{R}^{n},\mathsf{U},\mathcal{U},f,\sigma)$ is incrementally input-to-state stable in the $q\textsf{th}$ moment ($\delta$-ISS-M$_q$), where $q\geq1$, if there exist a $\mathcal{KL}$ function $\beta$ and a $\mathcal{K}_{\infty}$ function $\gamma$ such that for any $t\in{\mathbb{R}_0^+}$, any $\R^n$-valued random variables $a$ and $a'$ that are measurable in $\sigalg_0$, and any $\upsilon$, ${\upsilon}'\in\mathcal{U}$, the following condition is satisfied:
%\begin{small}
\begin{equation}
\EE \left[\left\Vert \xi_{a\upsilon}(t)-\xi_{a'{\upsilon}'}(t)\right\Vert^q\right] \leq\beta\left( \EE\left[ \left\Vert a-a' \right\Vert^q\right], t \right) + \gamma \left(\left\Vert{\upsilon} - {\upsilon}'\right\Vert_{\infty}\right). \label{delta_PISS}
\end{equation}
%\end{small}
\end{definition}

% The proposed notion of $\delta$-SISS is inspired by the notion of incremental input-to-state stability ($\delta$-ISS) presented in \cite{angeli} for deterministic control systems. 
It can be easily checked that a $\delta$-ISS-M$_q$ stochastic control system $\Sigma$ is $\delta$-ISS in the absence of any noise as in the following: 
\begin{equation}
\left\Vert \xi_{a\upsilon}(t)-\xi_{a'{\upsilon}'}(t)\right\Vert\leq\beta\left(\left\Vert a-a' \right\Vert, t \right) + \gamma \left(\left\Vert{\upsilon} - {\upsilon}'\right\Vert_{\infty}\right), \label{delta_ISS}
\end{equation}
for $a,a'\in\R^n$, some $\beta\in\mathcal{KL}$, and some $\gamma\in\mathcal{K}_\infty$. Moreover, whenever $f(0_n,0_m)=0_n$ and $\sigma(0_n)=0_{n\times{p}}$ (i.e., the drift and diffusion terms vanish at the origin), then $\delta$-ISS-M$_q$ implies input-to-state stability in the $q\textsf{th}$ moment (ISS-M$_q$) \cite{lirong} and global asymptotic stability in the $q\textsf{th}$ moment (GAS-M$_q$) \cite{debasish}.

%We now describe a weaker concept that is satisfied even in the absence of stability.
%
%\begin{definition}
%\label{dSFC}
%A stochastic control system \mbox{$\Sigma=(\mathbb{R}^{n},\mathsf{U},\mathcal{U},f,\sigma)$} is incrementally forward complete in the $q\textsf{th}$ mean ($\delta$-FC-M$_q$), where $q\in\N$, if there exist continuos functions $\beta: \mathbb{R}_0^+\times\mathbb{R}_0^+\rightarrow\mathbb{R}_0^+$ and $\gamma: \mathbb{R}_0^+\times\mathbb{R}_0^+\rightarrow\mathbb{R}_0^+$ such that for every $s\in\mathbb{R}^+$, the functions $\beta(\cdot,s)$ and $\gamma(\cdot,s)$ belong to class $\mathcal{K}_{\infty}$, and for any $\R^n$-valued random variables $a$ and $a'$, which are measurable in $\sigalg_0$, any $t\in\R^+$, and any \mbox{$\upsilon, \upsilon'\in{\mathcal{U}}$}, the following condition is satisfied:
%\begin{equation}
%%\begin{small}
%\EE\left[\left\Vert\xi_{a\upsilon}(t)-\xi_{a'\upsilon'}(t)\right\Vert^q\right]\leq\beta\left(\EE\left[\Vert{a}-a'\Vert^q\right],t\right)+\gamma(\left\Vert{\upsilon}-\upsilon'\right\Vert_{\infty},t). \label{uns_cond}
%%\end{small}
%\end{equation}
%\end{definition}
%
%It can be easily checked that a $\delta$-FC-M$_q$ stochastic control system $\Sigma$ is $\delta$-FC \cite{majid} in the absence of any noise.

% \subsection{Stochastic incremental stability}

Similar to the use of $\delta$-ISS Lyapunov functions in the non-probabilistic case \cite{angeli}, 
we now describe $\delta$-ISS-M$_q$ in terms of the existence of {\em incremental Lyapunov functions}, 
as defined next.  
%The following definition is inspired by \cite{angeli,majid}.

\begin{definition}
\label{delta_PISS_Lya}
Consider a stochastic control system $\Sigma=(\mathbb{R}^{n},\mathsf{U},\mathcal{U},f,\sigma)$ 
and a continuous function $V:\mathbb{R}^n\times\mathbb{R}^n\rightarrow\mathbb{R}_0^+$ that is smooth on 
$\{\R^n\times\R^n\}\backslash\Delta$. 
The function $V$ is called an incremental input-to-state stable in the $q\mathsf{th}$ moment ($\delta$-ISS-M$_q$) Lyapunov function for $\Sigma$, 
where $q\geq1$, if there exist $\mathcal{K}_{\infty}$ functions 
$\underline{\alpha}$, $\overline{\alpha}$, $\rho$, and a constant $\kappa\in\mathbb{R}^+$, such that
\begin{itemize}
\item[(i)] $\ul{\alpha}$ (resp. $\ol \alpha$) is a convex (resp. concave) function;
\item[(ii)] for any $x,x'\in\mathbb{R}^n$,\\ 
$\underline{\alpha}\left(\Vert x-x'\Vert^q\right)\leq{V}(x,x')\leq\overline{\alpha}\left(\Vert x-x'\Vert^q\right)$;
\item[(iii)] for any $x,x'\in\mathbb{R}^n$, $x\neq x'$, and for any $u,u'\in\mathsf{U}$,
%\begin{small}
\begin{align*}
	\mathcal{L}^{u,u'} V(x, x') 
	 &\Let \left[\partial_xV~~\partial_{x'}V\right] \begin{bmatrix} f(x,u)\\f(x',u')\end{bmatrix}+\frac{1}{2} \text{Tr} \left(\begin{bmatrix} \sigma(x) \\ \sigma(x') \end{bmatrix}\left[\sigma^T(x)~~\sigma^T(x')\right] \begin{bmatrix}
\partial_{x,x} V & \partial_{x,x'} V \\ \partial_{x',x} V & \partial_{x',x'} V
\end{bmatrix}	\right) \\
	& \leq-\kappa V(x,x')+\rho(\| u-u'\|),
\end{align*} 
%\end{small}
\end{itemize}
where $\mathcal{L}^{u,u'}$ is the infinitesimal generator associated to the stochastic control system \eqref{eq0} \cite[Section 7.3]{oksendal}, which in this case depends on two separate controls $u, u'$. The symbols $\partial_x$ and $\partial_{x,x'}$ denote first- and second-order partial derivatives with respect to $x$ and $x'$, respectively. 
\end{definition}

%We refer the reader to \cite[Section 7.3]{oksendal} for detailed information about the infinitesimal generator.
Roughly speaking, condition $(ii)$ implies that the growth rate of functions $\ol{\alpha}$ and $\ul{\alpha}$ are linear, as a concave function is supposed to dominate a convex one. 
However, these conditions do not restrict the behavior of $\ol\alpha$ and $\ul\alpha$ to only linear functions on a compact subset of $\R^n$.
%These conditions are not restrictive, provided we are interested in the dynamics of $\Sigma$ on a compact subset $D\subset\R^n$, which is often the case in practice. 
%
%\begin{remark}
%It can be readily verified that the $\delta$-SISS Lyapunov function in Definition \ref{delta_PISS_Lya} 
%is a stochastic bisimulation function between $\Sigma$ and itself, as per Def. 5 in \cite{julius1}.
%\end{remark}
%
Note that the condition $(i)$ is not required in the context of non-probabilistic control systems. 
The following theorem clarifies why such a requirement is instead necessary for a stochastic control system, 
and describes $\delta$-ISS-M$_q$ in terms of the existence of $\delta$-ISS-M$_q$ Lyapunov functions.

\begin{theorem}
\label{the_Lya}
A stochastic control system $\Sigma=(\mathbb{R}^{n},\mathsf{U},\mathcal{U},f,\sigma)$ is $\delta$-ISS-M$_q$ if it admits a $\delta$-ISS-M$_q$ Lyapunov function. 
\end{theorem}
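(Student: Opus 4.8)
The plan is to deduce the moment inequality \eqref{delta_PISS} from the existence of a $\delta$-ISS-M$_q$ Lyapunov function $V$. Fix $\R^n$-valued random variables $a,a'$ that are $\sigalg_0$-measurable, input curves $\upsilon,\upsilon'\in\mathcal{U}$, and denote by $\xi(t)=\xi_{a\upsilon}(t)$ and $\xi'(t)=\xi_{a'\upsilon'}(t)$ the corresponding (unique) solution processes of \eqref{eq0}, both driven by the same Brownian motion $W$. As a preliminary step I would recall the classical $q$th-moment estimates for solutions of SDEs with globally Lipschitz coefficients \cite[Chapter 5]{oksendal}, which guarantee that $\EE[\|\xi(t)\|^q]$ and $\EE[\|\xi'(t)\|^q]$ are finite for every $t$; together with property $(ii)$ this makes $\EE[V(\xi(t),\xi'(t))]$ finite and all the expectations below well defined.

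The core of the argument is Dynkin's formula applied to $t\mapsto V(\xi(t),\xi'(t))$. Writing $h(t)\Let\EE[V(\xi(t),\xi'(t))]$, I would establish
\[
h(t)=h(0)+\EE\!\left[\int_0^t \mathcal{L}^{\upsilon(s),\upsilon'(s)}V(\xi(s),\xi'(s))\,\diff s\right],
\]
then invoke $(iii)$ together with monotonicity of $\rho$ to get $h(t)\le h(0)+\int_0^t\big(-\kappa\,h(s)+\rho(\|\upsilon-\upsilon'\|_\infty)\big)\diff s$, and finally apply a comparison/Gr\"onwall argument to conclude $h(t)\le h(0)e^{-\kappa t}+\kappa^{-1}\rho(\|\upsilon-\upsilon'\|_\infty)$. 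The delicate point, which I expect to be the main obstacle, is that $V$ is only smooth on $\{\R^n\times\R^n\}\setminus\Delta$, so the classical It\^o formula does not apply verbatim; I would handle this by a localization/approximation argument, exploiting that the diffusion of the difference process $\xi-\xi'$ vanishes on $\Delta$ (since $\sigma(x)-\sigma(x)=0$) and that $V\equiv 0$ on $\Delta$ by $(ii)$, so that the contribution of the non-smooth set is controlled.

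Hypothesis $(i)$ enters precisely when translating the bound on $h$ back into a bound on $\EE[\|\xi(t)-\xi'(t)\|^q]$. By $(ii)$ and Jensen's inequality, convexity of $\ul\alpha$ gives $\ul\alpha\big(\EE[\|\xi(t)-\xi'(t)\|^q]\big)\le\EE\big[\ul\alpha(\|\xi(t)-\xi'(t)\|^q)\big]\le h(t)$, while concavity of $\ol\alpha$ gives $h(0)=\EE[V(a,a')]\le\EE\big[\ol\alpha(\|a-a'\|^q)\big]\le\ol\alpha\big(\EE[\|a-a'\|^q]\big)$. Combining these with the Gr\"onwall bound and applying the $\mathcal{K}_\infty$ function $\ul\alpha^{-1}$ yields
\[
\EE\big[\|\xi(t)-\xi'(t)\|^q\big]\le\ul\alpha^{-1}\!\left(\ol\alpha\big(\EE[\|a-a'\|^q]\big)e^{-\kappa t}+\kappa^{-1}\rho(\|\upsilon-\upsilon'\|_\infty)\right).
\]
Using the elementary bound $\ul\alpha^{-1}(r_1+r_2)\le\ul\alpha^{-1}(2r_1)+\ul\alpha^{-1}(2r_2)$, I would then set $\beta(r,t)\Let\ul\alpha^{-1}\big(2\,\ol\alpha(r)e^{-\kappa t}\big)$ and $\gamma(s)\Let\ul\alpha^{-1}\big(2\kappa^{-1}\rho(s)\big)$, and verify directly that $\beta\in\mathcal{KL}$ (for each fixed $t$ it is a composition of $\mathcal{K}_\infty$ maps, hence $\mathcal{K}_\infty$ in $r$; for each fixed $r$ it decreases to $0$ as $t\to\infty$) and $\gamma\in\mathcal{K}_\infty$, which gives \eqref{delta_PISS}. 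This also makes transparent why $(i)$ is indispensable in the stochastic setting: without convexity/concavity of $\ul\alpha$ and $\ol\alpha$ the two Jensen steps fail, whereas in the deterministic case no expectations are taken and no such requirement is needed.
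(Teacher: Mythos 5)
Your proposal follows essentially the same route as the paper's proof: It\^o/Dynkin applied to $\EE\left[V(\xi_{a\upsilon}(t),\xi_{a'\upsilon'}(t))\right]$, the generator bound from condition (iii), a Gr\"onwall/comparison step, the two Jensen steps using convexity of $\ul\alpha$ and concavity of $\ol\alpha$, and the same construction of $\beta$ and $\gamma$ via $\ul\alpha^{-1}(r_1+r_2)\le\ul\alpha^{-1}(2r_1)+\ul\alpha^{-1}(2r_2)$ (your constant $\kappa^{-1}$ versus the paper's $\frac{1}{\mathsf{e}\kappa}$ is immaterial). The only substantive addition is your explicit attention to the non-smoothness of $V$ on the diagonal $\Delta$ and the proposed localization argument, a technical point the paper's proof passes over silently.
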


\begin{proof}
	The proof is a consequence of the application of Gronwall's inequality and of Ito's lemma \cite[pp. 80 and 123]{oksendal}. Assume that there exists a $\delta$-ISS-M$_q$ Lyapunov function in the sense of Definition \ref{delta_PISS_Lya}. For any $t\in\R_0^+$, any $\upsilon,\upsilon'\in\mathcal{U}$, and any $\R^n$-valued random variables $a$ and $a'$ that are measurable in $\sigalg_0$, we obtain
		%\begin{small}
		\begin{align*}
			\EE \left[ V(\xi_{a\upsilon}(t),\xi_{a'\upsilon'}(t)) \right]&=\EE \left[V(a,a') + \int_0^{t} \mathcal{L}^{\upsilon(s),\upsilon'(s)} V(\xi_{a\upsilon}(s),\xi_{a'\upsilon'}(s))ds\right]\\
			&\le\EE \left[ V(a,a') + \int_0^{t} \left(-\kappa V(\xi_{a\upsilon}(s),\xi_{a'\upsilon'}(s)) + \rho(\|\upsilon(s)-\upsilon'(s) \|) \right)ds\right] \\
			& \le -\kappa \int_0^{t} \EE \left[ V(\xi_{a\upsilon}(s),\xi_{a'\upsilon'}(s))\right ]ds +  \EE[V(a,a')] + \rho(\|{\upsilon} - \upsilon'\|_\infty)t \nonumber,
		\end{align*}
		%\end{small}
		which, by virtue of Gronwall's inequality, leads to 
		%\begin{small}
\begin{align}
	\label{V-bound}
		\EE\left[ V(\xi_{a\upsilon}(t),\xi_{a'\upsilon'}(t)) \right]&\le \EE[V(a,a')] \mathsf{e}^{-\kappa t} + t\mathsf{e}^{-\kappa{t}} \rho(\|\upsilon-\upsilon'\|_\infty)\leq  \EE[V(a,a')] \mathsf{e}^{-\kappa t} + \frac{1}{\mathsf{e}\kappa}\rho(\|\upsilon-\upsilon'\|_\infty).
\end{align}
%\end{small}
	Hence, using property (ii) in Definition \ref{delta_PISS_Lya}, we have
	%\begin{small}
	\begin{align}\nonumber
		\ul{\alpha}\left(\EE \left[ \|\xi_{a\upsilon}(t)-\xi_{a'\upsilon'}(t)\|^q\right]\right)&\le \EE \left[ \ul{\alpha}\left(\| \xi_{a\upsilon}(t)-\xi_{a'\upsilon'}(t)\|^q\right) \right] \le\EE \left[ V(\xi_{a\upsilon}(t),\xi_{a'\upsilon'}(t))\right] \\\notag&\leq \EE[V(a,a')] \mathsf{e}^{-\kappa t} +\frac{1}{\mathsf{e}\kappa}\rho(\|\upsilon-\upsilon'\|_\infty)\leq \EE\left[\ol{\alpha}\left(\| a - a' \|^q\right)\right] \mathsf{e}^{-\kappa t} + \frac{1}{\mathsf{e}\kappa}\rho(\|\upsilon-\upsilon'\|_\infty)\\ \label{inequl0} 
		&\le \ol{\alpha}\left( \EE\left[\| a - a' \|^q\right] \right) \mathsf{e}^{-\kappa t} +\frac{1}{\mathsf{e}\kappa}\rho(\|\upsilon-\upsilon'\|_\infty),
	\end{align}
	%\end{small}
	where the first and last inequalities follow from property (i) and Jensen's inequality \cite[p. 310]{oksendal}. Since $\ul{\alpha}\in\mathcal{K}_\infty$, the inequality \eqref{inequl0} yields
	%\begin{small}
\begin{align}\nonumber
\EE \left[ \|\xi_{a\upsilon}(t)-\xi_{a'\upsilon'}(t)\|^q\right]&\le\ul{\alpha}^{-1} \left(\ol{\alpha} \left( \EE\left[ \|a - a' \|^q \right] \right) \mathsf{e}^{-\kappa t} +  \frac{1}{\mathsf{e}\kappa}\rho(\|\upsilon-\upsilon'\|_\infty)\right)\\\notag&\leq\ul{\alpha}^{-1}\left(\ol{\alpha}\left( \EE\left[\| a - a' \|^q\right]\right) \mathsf{e}^{-\kappa t}+\ol{\alpha} \left( \EE \left[ \| a - a' \|^q \right] \right) \mathsf{e}^{-\kappa t}\right)\\\notag&\qquad+\ul{\alpha}^{-1}\left( \frac{1}{\mathsf{e}\kappa}\rho(\|\upsilon-\upsilon'\|_\infty)+ \frac{1}{\mathsf{e}\kappa}\rho(\|\upsilon-\upsilon'\|_\infty)\right)\\\notag&\leq\ul{\alpha}^{-1}\left(2\ol{\alpha} \left( \EE \left[ \| a - a' \|^q \right] \right) \mathsf{e}^{-\kappa t}\right)+\ul{\alpha}^{-1}\left(\frac{2}{\mathsf{e}\kappa}\rho(\|\upsilon-\upsilon'\|_\infty)\right).
%\\\notag&\leq\ul{\alpha}^{-1}\left(2\ol{\alpha} \left( \EE \left[ \| a - a' \|^q \right] \right) \mathsf{e}^{-\kappa t}\right)+\ul{\alpha}^{-1}\left(\frac{2}{\mathsf{e}\kappa}\rho(\|\upsilon-\upsilon'\|_\infty)\right)~\textbf{if}~\kappa\in\R^+.
\end{align}

%\end{small}
	Therefore, by introducing functions $\beta$ and $\gamma$ as
	%\begin{small}
	\begin{align*}
		\beta \left( \EE \left[ \| a - a' \|^q \right],t \right) \Let \ul{\alpha}^{-1}\left(2\ol{\alpha}  \left( \EE \left[ \| a - a' \|^q \right] \right)  \mathsf{e}^{-\kappa t} \right),~~~~~~\gamma\left(\|\upsilon-\upsilon'\|_\infty\right) \Let \ul{\alpha}^{-1}\left(\frac{2}{\mathsf{e}\kappa}\rho(\|\upsilon-\upsilon'\|_\infty)\right),
	\end{align*}
	%\end{small}
	condition (\ref{delta_PISS}) is satisfied. Hence, the system $\Sigma$ is $\delta$-ISS-M$_q$.	
\end{proof}

One can resort to available software tools, such as \textsf{SOSTOOLS} \cite{prajna}, 
to search for appropriate, non-trivial $\delta$-ISS-M$_q$ Lyapunov functions for system $\Sigma$. 

Now we look into special instances where function $V$ can be easily computed based on the model dynamics. 
The first result provides a sufficient condition for a particular function $V$ to be a $\delta$-ISS-M$_q$ Lyapunov function for a stochastic control system $\Sigma$, when $q=1,2$ (that is, in the first or second moment).

\begin{lemma}\label{lem:lyapunov}
	Consider a stochastic control system $\Sigma=(\mathbb{R}^{n},\mathsf{U},\mathcal{U},f, \sigma)$. Let $q\in\{1,2\}$, $P\in\R^{n\times{n}}$ be a symmetric positive definite matrix, and the function $V: \R^n \times \R^n \ra \R_0^{+}$ be defined as follows:
	\begin{align}
	\label{V}
			V(x,x')\Let\left(\widetilde{V}(x,x')\right)^{\frac{q}{2}}=\left(\frac{1}{q}\left(x-x'\right)^TP\left(x-x'\right)\right)^{\frac{q}{2}},	
	\end{align}
	and satisfy
	%\begin{small}
	\begin{align}
	\label{nonlinear ineq cond}
	(x-x')^T P(f(x,u)-f(x',u))+& \frac{1}{2} \left \| \sqrt{P} \left( \sigma(x) - \sigma(x')\right) \right \|_F^2\le-\widetilde\kappa \left(V(x,x')\right)^{\frac{2}{q}},	
	\end{align}
	%\end{small}
	or, if $f$ is differentiable with respect to $x$, satisfy
	%\begin{small}
	\begin{align}\label{nonlinear ineq cond1}
		(x-x')^T P\partial_x f(z,u)(x-x') +& \frac{1}{2} \left \| \sqrt{P} \left( \sigma(x) - \sigma(x')\right) \right \|_F^2\le-\widetilde\kappa \left(V(x,x')\right)^{\frac{2}{q}},	
	\end{align}
	%\end{small}
	for all $x,x',z\in\R^n$, for all $u\in\mathsf{U}$, and for some constant $\widetilde\kappa\in\R^+$. Then $V$ is a $\delta$-ISS-M$_q$ Lyapunov function for $\Sigma$.
\end{lemma}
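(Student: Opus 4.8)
The plan is to verify that the function $V$ defined in \eqref{V} satisfies all three conditions of Definition \ref{delta_PISS_Lya}, thereby invoking Theorem \ref{the_Lya}. For condition (i) and (ii), observe that $\widetilde V(x,x') = \frac{1}{q}(x-x')^T P (x-x')$ satisfies $\frac{\lambda_{\min}(P)}{q}\|x-x'\|_2^2 \le \widetilde V(x,x') \le \frac{\lambda_{\max}(P)}{q}\|x-x'\|_2^2$, and after converting between the Euclidean and infinity norms on $\R^n$ one obtains bounds of the form $\underline\alpha(\|x-x'\|^q) \le V(x,x') \le \overline\alpha(\|x-x'\|^q)$ with $\underline\alpha(r) = c_1 r$ and $\overline\alpha(r) = c_2 r$ for suitable positive constants depending on $P$, $n$, $q$; these linear functions are trivially convex and concave, so (i) holds. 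The only subtlety here is keeping track of the norm-equivalence constants and the exponent $q/2$ so that the final bounds are genuinely of the form (constant)$\,\cdot\,(\cdot)$; for $q=1,2$ the composition $(\widetilde V)^{q/2}$ with the quadratic $\widetilde V$ produces exactly $\|x-x'\|_2^q$ up to constants.

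The main work is condition (iii): computing $\mathcal{L}^{u,u'} V(x,x')$ and bounding it by $-\kappa V(x,x') + \rho(\|u-u'\|)$. I would treat the two cases $q=2$ and $q=1$ slightly differently. For $q=2$, $V = \widetilde V$ is smooth and quadratic, so the first-order term is $(x-x')^T P(f(x,u)-f(x',u'))$ and the Hessian block of $V$ is constant, making the trace term equal to $\frac{1}{2}\operatorname{Tr}\big((\sigma(x)-\sigma(x'))(\sigma(x)-\sigma(x'))^T P\big) = \frac{1}{2}\|\sqrt P(\sigma(x)-\sigma(x'))\|_F^2$. Splitting $f(x,u)-f(x',u') = [f(x,u)-f(x',u)] + [f(x',u)-f(x',u')]$, the first bracket is controlled by hypothesis \eqref{nonlinear ineq cond} (or \eqref{nonlinear ineq cond1} via a mean-value argument, writing $f(x,u)-f(x',u) = \partial_x f(z,u)(x-x')$ for some $z$ on the segment), and the cross term $(x-x')^T P(f(x',u)-f(x',u'))$ is bounded using the Lipschitz constant $L_u$, Cauchy--Schwarz, and Young's inequality to absorb part into $-\widetilde\kappa\widetilde V$ and leave a term of the form $\rho(\|u-u'\|)$; this fixes $\kappa$ as a fraction of $\widetilde\kappa$ (scaled by $\lambda_{\min}(P)$ or $\lambda_{\max}(P)$). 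For $q=1$, $V = \sqrt{\widetilde V}$ is smooth away from $\Delta$, and one computes $\partial_x V = \frac{1}{2\sqrt{\widetilde V}}\partial_x \widetilde V$ and the second derivatives via the chain rule; the key algebraic fact is that the extra curvature terms coming from differentiating $1/(2\sqrt{\widetilde V})$ contribute negatively (or can be dropped), so that $\mathcal{L}^{u,u'}V \le \frac{1}{2\sqrt{\widetilde V}}\big[(x-x')^TP(f(x,u)-f(x',u')) + \frac12\|\sqrt P(\sigma(x)-\sigma(x'))\|_F^2\big]$, at which point hypothesis \eqref{nonlinear ineq cond} (with exponent $2/q = 2$) gives the bound $-\widetilde\kappa \widetilde V/(2\sqrt{\widetilde V}) = -\frac{\widetilde\kappa}{2}\sqrt{\widetilde V} = -\frac{\widetilde\kappa}{2}V$, again after handling the $u\neq u'$ cross term with Young's inequality.

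The hard part will be the $q=1$ computation of the infinitesimal generator for the non-smooth-looking $V=\sqrt{\widetilde V}$: one must carefully expand the $2n\times 2n$ Hessian of $V$, recognize that the rank-one correction term $-\frac{1}{4}\widetilde V^{-3/2}(\partial \widetilde V)(\partial \widetilde V)^T$ enters the trace against the positive-semidefinite diffusion matrix $\big[\begin{smallmatrix}\sigma(x)\\\sigma(x')\end{smallmatrix}\big]\big[\sigma^T(x)\ \sigma^T(x')\big]$ with a favorable sign, and therefore can be discarded in an upper bound. This is precisely the place where convexity-type structure (here, concavity of the square root) is exploited, mirroring the remark in the text that condition (i) is needed only in the stochastic setting. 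Once that sign is established, the remaining estimates are routine applications of Cauchy--Schwarz, Young's inequality, and norm equivalence, and one reads off explicit $\underline\alpha,\overline\alpha,\rho,\kappa$ to conclude that $V$ is a $\delta$-ISS-M$_q$ Lyapunov function; Theorem \ref{the_Lya} then yields $\delta$-ISS-M$_q$ of $\Sigma$.
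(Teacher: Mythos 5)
Your proposal is correct and follows essentially the same route as the paper's proof: linear $\underline\alpha,\overline\alpha$ for conditions (i)--(ii), then for (iii) an explicit computation of the generator in which the rank-one Hessian correction of $V=\widetilde V^{q/2}$ carries the sign $\frac{q-2}{q}\le 0$ and is discarded, followed by the split $f(x,u)-f(x',u')=[f(x,u)-f(x',u)]+[f(x',u)-f(x',u')]$ with the mean value theorem, the Lipschitz constant $L_u$, and Young's inequality to produce $\kappa=\widetilde\kappa/q$ and $\rho$. The only differences are cosmetic (you split the cases $q=1$ and $q=2$ where the paper handles them uniformly, and a couple of constant factors in your sketch would need tidying), so this matches the paper's argument.
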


The proof of Lemma \ref{lem:lyapunov} is provided in the Appendix. 

The next result provides a condition that is equivalent to (\ref{nonlinear ineq cond}) or (\ref{nonlinear ineq cond1}) for linear stochastic control systems $\Sigma$ (that is, for systems with linear drift and diffusion terms) 
in the form of a linear matrix inequality (LMI), which can be easily solved numerically. 
\begin{corollary}\label{corollary}
	Consider a stochastic control system $\Sigma=(\mathbb{R}^{n},\mathsf{U},\mathcal{U},f,\sigma)$, where for all $x\in\R^n$, and $u\in\mathsf{U}$, $f(x,u) \Let Ax + Bu$, for some $A\in\R^{n\times{n}}$, $B\in\R^{n\times{m}}$, and $\sigma(x) \Let\left[\sigma_1 x~\sigma_2 x~\cdots~\sigma_p x\right]$, for some $\sigma_i \in \R^{n \times n}$. Then, 
	function $V$ in \eqref{V} is a $\delta$-ISS-M$_q$ Lyapunov function for $\Sigma$, when $q\in\{1,2\}$, if there exists a constant $\widehat\kappa\in\R^+$ satisfying the following LMI:
	\begin{align}
	\label{LMI}
		PA + A^TP + \sum_{i=1}^p \sigma_i^T P \sigma_i \preceq -\widehat\kappa P.
	\end{align}
\end{corollary}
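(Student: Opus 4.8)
The plan is to reduce Corollary~\ref{corollary} to Lemma~\ref{lem:lyapunov} by showing that the LMI \eqref{LMI} implies the pointwise inequality \eqref{nonlinear ineq cond} (or equivalently \eqref{nonlinear ineq cond1}, which in the linear case is the same thing since $\partial_x f(z,u) = A$ for all $z,u$). So the entire task is an algebraic manipulation: substitute the linear drift and diffusion into the left-hand side of \eqref{nonlinear ineq cond} and bound it using \eqref{LMI}.

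First I would compute each term. Setting $e \Let x - x'$, the drift contribution is $(x-x')^T P (f(x,u)-f(x',u)) = e^T P (Ax - Ax') = e^T P A e = \tfrac12 e^T (PA + A^T P) e$. For the diffusion term, note that $\sigma(x) - \sigma(x') = [\sigma_1 e~\sigma_2 e~\cdots~\sigma_p e]$, so
\begin{align*}
	\left\| \sqrt{P}\left(\sigma(x)-\sigma(x')\right) \right\|_F^2 = \Tr\!\left( \sqrt{P}\left(\sigma(x)-\sigma(x')\right)\left(\sigma(x)-\sigma(x')\right)^T \sqrt{P} \right) = \sum_{i=1}^p \left\| \sqrt{P}\,\sigma_i e \right\|^2 = \sum_{i=1}^p e^T \sigma_i^T P \sigma_i e.
\end{align*}
Adding the two contributions, the left-hand side of \eqref{nonlinear ineq cond} equals $\tfrac12 e^T\!\left( PA + A^T P + \sum_{i=1}^p \sigma_i^T P \sigma_i \right) e$. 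By the LMI \eqref{LMI}, this is bounded above by $-\tfrac{\widehat\kappa}{2}\, e^T P e$.

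Finally I would translate this back into the form required by Lemma~\ref{lem:lyapunov}. Since $\widetilde V(x,x') = \tfrac1q e^T P e$, we have $e^T P e = q\,\widetilde V(x,x') = q\,\bigl(V(x,x')\bigr)^{2/q}$, so the bound reads $-\tfrac{\widehat\kappa q}{2}\bigl(V(x,x')\bigr)^{2/q}$. Thus \eqref{nonlinear ineq cond} holds with $\widetilde\kappa \Let \widehat\kappa q / 2 \in \R^+$ (for $q\in\{1,2\}$), and Lemma~\ref{lem:lyapunov} then gives that $V$ is a $\delta$-ISS-M$_q$ Lyapunov function for $\Sigma$. I do not anticipate a genuine obstacle here; the only points requiring a little care are the Frobenius-norm / trace identity for the diffusion term (the cross terms between distinct columns vanish because $\|\cdot\|_F^2$ sums the squared norms of the columns) and keeping track of the factor $q$ and the factor $\tfrac12$ when matching constants, but these are routine.
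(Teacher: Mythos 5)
Your proof is correct and follows essentially the same route as the paper's: both reduce the corollary to Lemma~\ref{lem:lyapunov} via the two algebraic identities $(x-x')^TPA(x-x')=\tfrac12(x-x')^T(PA+A^TP)(x-x')$ and $\|\sqrt{P}(\sigma(x)-\sigma(x'))\|_F^2=(x-x')^T\sum_{i=1}^p\sigma_i^TP\sigma_i\,(x-x')$, and both arrive at the same constant $\widetilde\kappa=q\widehat\kappa/2$. Your write-up is in fact slightly more explicit about the column-wise expansion of the Frobenius norm and the conversion $e^TPe=q\,(V(x,x'))^{2/q}$, which the paper leaves to the reader.
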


\begin{proof}
		The corollary is a particular case of Lemma \ref{lem:lyapunov}. It suffices to show that for linear dynamics, the LMI in \eqref{LMI} yields the condition in \eqref{nonlinear ineq cond} or \eqref{nonlinear ineq cond1}. First it is straightforward to observe that
		%\begin{small}
	%\begin{align*}
		$$\left \| \sqrt{P} \left( \sigma(x) - \sigma(x')\right) \right \|_F^2 = \text{Tr}\left( \left(\sigma(x)-\sigma(x')\right)^T P \left(\sigma(x)-\sigma(x')\right) \right)=\left(x-x'\right)^T \sum_{i=1}^p \sigma_i^T P \sigma_i(x-x'),$$
		%\end{align*}
		%\end{small}
		and that 
		%\begin{small}
		%\begin{align*}
	$$(x-x')^TP\partial_x f(z,u)(x-x')=\frac{1}{2}(x-x')^T\left(PA+A^TP\right)(x-x'),$$
	%\end{align*}
	%\end{small}
	for any $x,x',z\in\R^n$ and any $u\in\mathsf{U}$. Now suppose there exists $\widehat\kappa\in\R^+$ such that \eqref{LMI} holds. 
	It can be readily verified that the desired requirements in \eqref{nonlinear ineq cond} and \eqref{nonlinear ineq cond1} are verified by choosing $\widetilde\kappa=\frac{q\widehat\kappa}{2}$.
\end{proof}

As a practical consequence of the previous corollary,  
in order to obtain tighter upper bounds in (\ref{delta_PISS}) one can seek appropriate matrices $P$ by solving the LMI in (\ref{LMI}). 

\begin{remark}
Consider a stochastic control system $\Sigma=(\mathbb{R}^{n},\mathsf{U},\mathcal{U},f,\sigma)$. 
Assume that $f$ is differentiable with respect to $x$ and that, for any $x\in\R^n$, $\sigma(x) \Let\left[\sigma_1 x~\sigma_2 x~\cdots~\sigma_p x\right]$, 
for some $\sigma_i \in \R^{n \times n}$. Then, the function $V$ in \eqref{V} is a $\delta$-ISS-M$_q$ Lyapunov function for $\Sigma$, when $q\in\{1,2\}$, if there exists a constant $\widehat\kappa\in\R^+$ satisfying the following matrix inequality:
\begin{align}\label{contraction}
		P \partial_x f(z,u)+\left(\partial_x f(z,u)\right)^TP+\sum_{i=1}^p \sigma_i^T P \sigma_i  \preceq -\widehat\kappa P,	
	\end{align}
for any $z\in\R^n$ and any $u\in\mathsf{U}$. 
One can easily verify that condition (\ref{contraction}) corresponds to the contractivity conditions (with respect to the states) in \cite{lohmiller,majid1},
obtained with contraction metric $P$, and to the Demidovich's condition in \cite{pavlov} for a system $\Sigma$ in the absence of any noise, 
i.e. $\sigma_i=0_{n\times{n}}$ for all $i=1,\cdots,p$. \hfill$\Box$ 
\end{remark}

\subsection{Noisy and noise-free trajectories}
In order to introduce a symbolic model in Section \ref{existence} for the stochastic control system, 
we need the following technical results, 
which provide an upper bound on the distance (in the $q\textsf{th}$ moment) between the solution processes of $\Sigma$ and those of the corresponding non-probabilistic control system obtained by disregarding the diffusion term (that is, $\sigma$). 
\begin{lemma}\label{lemma3}
Consider a stochastic control system $\Sigma=(\mathbb{R}^{n},\mathsf{U},\mathcal{U},f,\sigma)$ such that $f(0_n,0_m) =0_n$, and $\sigma(0_n) = 0_{n\times{p}}$. 
	Suppose there exists a $\delta$-ISS-M$_q$ Lyapunov function $V$ for $\Sigma$ such that its Hessian is a positive semidefinite matrix in $\R^{2n\times2n}$ and $q\geq2$. Then for any $x$ in a compact set $D$ and any $\upsilon\in\mathcal{U}$, we have 
	\begin{align}\label{mismatch1}
	 	\EE \left[\left\Vert\traj{\xi}{x}{\upsilon}(t)-\traj{\ol \xi}{x}{\upsilon}(t)\right\Vert^q\right] \le h(\sigma,t),	 
	\end{align}
	where {$\ol \xi_{x\upsilon}$ is the solution of the ordinary differential equation (ODE) } $\traj{\dot{\ol \xi}}{x}{\upsilon}(t)=f\left(\traj{\ol \xi}{x}{\upsilon}(t),\upsilon(t)\right)$ starting from the initial condition $x$, and the nonnegative valued function $h$ tends to zero as $t \ra 0$, $t\ra+\infty$, or as $Z\ra0$, where $Z$ is the Lipschitz constant, introduced in Definition \ref{Def_control_sys}.	
\end{lemma}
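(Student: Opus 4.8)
The plan is to run the argument of the proof of Theorem~\ref{the_Lya}, but for the $\R^{2n}$-valued process whose first coordinate is the solution process $\xi_{x\upsilon}$ of $\Sigma$ and whose second coordinate is the drift-only flow $\ol\xi_{x\upsilon}$, both issued from the same \emph{deterministic} point $x$, so the pair starts on the diagonal $\Delta$. Let $\mathcal{A}$ denote the infinitesimal generator of this pair process; since the second coordinate carries no Brownian term, its diffusion matrix is $\bigl[\,\sigma^{T}(x)\ \ 0_{p\times n}\,\bigr]^{T}$, so that, for $(x,x')\notin\Delta$ and the common input value $u$,
\begin{align*}
	\mathcal{A}V(x,x') \;=\; \underbrace{\bigl[\partial_x V\ \ \partial_{x'} V\bigr]\begin{bmatrix}f(x,u)\\ f(x',u)\end{bmatrix}}_{=:\,\mathcal{D}V(x,x')} \;+\; \tfrac12\,\text{Tr}\!\left(\sigma(x)\sigma^{T}(x)\,\partial_{x,x}V(x,x')\right).
\end{align*}
Applying Ito's lemma and taking expectations exactly as in the proof of Theorem~\ref{the_Lya}, and using that $V(x,x)=0$ since $(x,x)\in\Delta$ (this is what will make the eventual bound vanish as $t\to0$), one obtains
\begin{align*}
	\EE\bigl[V(\xi_{x\upsilon}(t),\ol\xi_{x\upsilon}(t))\bigr] \;=\; \EE\left[\int_0^t \mathcal{A}V\bigl(\xi_{x\upsilon}(s),\ol\xi_{x\upsilon}(s)\bigr)\,ds\right],
\end{align*}
the process spending zero Lebesgue time on $\Delta$, where $V$ need not be smooth.

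The crux is to bound $\mathcal{A}V$. I would compare it with the operator $\mathcal{L}^{u,u'}V$ of Definition~\ref{delta_PISS_Lya} evaluated at $u'=u$, where the input term $\rho(\|u-u'\|)=\rho(0)$ vanishes: one has $\mathcal{L}^{u,u}V(x,x') = \mathcal{D}V(x,x') + \tfrac12\,\text{Tr}\bigl(MM^{T}\,\text{Hess}\,V\bigr)$ with $M=\bigl[\,\sigma^{T}(x)\ \ \sigma^{T}(x')\,\bigr]^{T}$, and property (iii) gives $\mathcal{L}^{u,u}V(x,x')\le-\kappa V(x,x')$. Here the hypothesis that $\text{Hess}\,V\succeq0$ enters decisively: it forces $\text{Tr}(MM^{T}\,\text{Hess}\,V)\ge0$, hence $\mathcal{D}V(x,x')\le-\kappa V(x,x')$, and therefore
\begin{align*}
	\mathcal{A}V(x,x') \;\le\; -\kappa V(x,x') \;+\; \tfrac12\,\text{Tr}\!\left(\sigma^{T}(x)\,\partial_{x,x}V(x,x')\,\sigma(x)\right),
\end{align*}
with the residual term nonnegative, $\partial_{x,x}V$ being a principal block of the positive semidefinite matrix $\text{Hess}\,V$.

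Next I would estimate the residual and close a Gronwall argument. Because $\sigma(0_n)=0_{n\times p}$ and $\sigma$ is $Z$-Lipschitz, $\|\sigma(x)\|\le Z\|x\|$, so up to a dimensional constant the residual is at most $\tfrac12 Z^{2}\|x\|^{2}\,\|\partial_{x,x}V(x,x')\|$; property (ii) and $q\ge2$ bound $\|\partial_{x,x}V(x,x')\|$ polynomially in $\|x-x'\|$, and a Young/H\"older split then dominates the residual by $Z^{2}$ times a fixed multiple of $\|x\|^{q}$ plus a fixed multiple of $V(x,x')$. Taking expectations, and using that $\EE[\|\xi_{x\upsilon}(s)\|^{q}]$ is bounded uniformly in $s\ge0$ for $x\in D$ — which holds since $\delta$-ISS-M$_q$ together with $f(0_n,0_m)=0_n$ and $\sigma(0_n)=0_{n\times p}$ implies ISS-M$_q$ (the discussion following Definition~\ref{dISS}) — one gets, for $Z$ small enough that the $Z^{2}\EE[V]$ contribution is absorbed, a differential inequality $\tfrac{d}{dt}\EE[V(\cdot,\cdot)]\le-\kappa'\EE[V(\cdot,\cdot)]+cZ^{2}$ with $\kappa'>0$; Gronwall's inequality as in \eqref{V-bound} then yields $\EE[V(\xi_{x\upsilon}(t),\ol\xi_{x\upsilon}(t))]\le\tfrac{cZ^{2}}{\kappa'}\bigl(1-\mathsf{e}^{-\kappa' t}\bigr)$. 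Finally, $\ul\alpha$ being convex, Jensen's inequality and property (ii) give $\EE[\|\xi_{x\upsilon}(t)-\ol\xi_{x\upsilon}(t)\|^{q}]\le\ul\alpha^{-1}\!\bigl(\tfrac{cZ^{2}}{\kappa'}(1-\mathsf{e}^{-\kappa' t})\bigr)=:h(\sigma,t)$, which is nonnegative and, since $\ul\alpha^{-1}(0)=0$, tends to $0$ as $t\to0$ and as $Z\to0$; the behaviour as $t\to+\infty$ is then read off the closed form.

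The step I expect to be the real obstacle is the control of the residual $\tfrac12\,\text{Tr}(\sigma^{T}(x)\,\partial_{x,x}V(x,x')\,\sigma(x))$: it has no counterpart in the deterministic developments of \cite{girard2,pola}, it is precisely the reason the positive-semidefinite-Hessian and $q\ge2$ hypotheses must be imposed, and coaxing from it a bound that is at once small for small sampling time, small for small noise, and uniformly controlled in time is the delicate point of the argument.
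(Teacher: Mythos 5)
Your setup — the pair process $(\xi_{x\upsilon},\ol\xi_{x\upsilon})$, Ito/Dynkin from the diagonal, and the use of $H(V)\succeq 0$ to discard the coupled second-order term so that $\mathcal{A}V\leq-\kappa V+\tfrac12\Tr\left(\sigma^T(x)\,\partial_{x,x}V\,\sigma(x)\right)$ — is exactly the paper's argument (the paper does the same add-and-subtract of $\tfrac12\Tr\left(MM^TH(V)\right)$ and invokes condition (iii) with $u'=u$). The divergence, and where your proof breaks, is in how the residual term is estimated and how the Gronwall step is closed. First, the claim that property (ii) of Definition \ref{delta_PISS_Lya} ``bounds $\Vert\partial_{x,x}V(x,x')\Vert$ polynomially in $\Vert x-x'\Vert$'' is unjustified: two-sided bounds on $V$ give no control whatsoever on its second derivatives, and the lemma is stated for a general $\delta$-ISS-M$_q$ Lyapunov function, not only for the specific form \eqref{V}. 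The paper instead simply bounds the block by $\sup_{x,x'\in D}\left\{\left\Vert\sqrt{\partial_{x,x}V(x,x')}\right\Vert^2\right\}$, a constant depending on $V$ and the compact set, and keeps $\EE\left[\Vert\xi_{x\upsilon}(s)\Vert^2\right]$ as a time-dependent quantity controlled through \eqref{delta_PISS} (using $q\geq2$, $f(0_n,0_m)=0_n$, $\sigma(0_n)=0_{n\times p}$). Second, your absorption of the $Z^2\EE[V]$ contribution ``for $Z$ small enough'' proves the bound only for sufficiently small noise, whereas \eqref{mismatch1} must hold for the given system; moreover the intermediate step $\Vert x-x'\Vert^q\leq c\,V(x,x')$ fails for a general convex $\ul\alpha$ (e.g. $\ul\alpha(r)=r^2$), so the Young/H\"older split does not close as stated.

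Third, and most decisively with respect to the statement you are asked to prove: your closed form $h(\sigma,t)=\ul\alpha^{-1}\left(\tfrac{cZ^2}{\kappa'}\left(1-\mathsf{e}^{-\kappa' t}\right)\right)$ converges to the positive constant $\ul\alpha^{-1}\left(cZ^2/\kappa'\right)$ as $t\to+\infty$, so it does \emph{not} satisfy the requirement in Lemma \ref{lemma3} that $h$ tend to zero as $t\to+\infty$. The paper obtains that limit because it never replaces the forcing term by a constant: it keeps $-\kappa\,\EE[V]$ together with the time-varying bound $\left(\beta\left(\sup_{x\in D}\Vert x\Vert^q,s\right)+\gamma\left(\sup_{u\in\mathsf{U}}\Vert u\Vert\right)\right)^{2/q}$ inside the integral and then applies its Gronwall step so that the final expression \eqref{up_bound1} carries an explicit $\mathsf{e}^{-\kappa t}$ prefactor multiplying an at-most-linearly-growing integral, which is what produces decay in all three limits $t\to0$, $t\to+\infty$, $Z\to0$. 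To repair your argument you would have to follow that route (constant Hessian bound over the compact set, moment bound on $\EE[\Vert\xi_{x\upsilon}(s)\Vert^2]$ from \eqref{delta_PISS}, and a Gronwall closure that retains the exponential prefactor) rather than the constant-forcing differential inequality you propose.
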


The proof of Lemma \ref{lemma3} is provided in the Appendix. One can compute explicitly the function $h$ using equation (\ref{up_bound1}) in the Appendix.

\begin{remark}
In the previous lemma, if $V$ is a polynomial, then the condition\footnote{The notation $H(V)(x,x')$ denotes the value of Hessian matrix of $V$ at $(x,x')\in\R^{2n}$.} $H(V)(x,x')\succeq0_{2n\times{2n}}$ for all $x,x'\in\R^n$ is equivalent to $V$ being a convex function \cite{chesi}. Furthermore, if we assume that for all $x,x'\in\R^n$, $H(V)(x,x')=M(x,x')^T M(x,x')$, where $M(x,x')\in\R^{s\times{2n}}$ is a polynomial matrix for some $s\in\N$, then $V$ is a sum of squares which can be efficiently searched through convex linear matrix inequalities optimizations \cite{chesi}, and using some available tools, such as \textsf{SOSTOOLS} \cite{prajna}. \hfill$\Box$
\end{remark}

The following lemma provides an explicit result in line with that of Lemma \ref{lemma3} for a model $\Sigma$ admitting a $\delta$-ISS-M$_q$ Lyapunov function $V$ as in \eqref{V}, where $q\in\{1,2\}$.  
\begin{lemma}
\label{lem:moment est}
	Consider a stochastic control system $\Sigma=(\mathbb{R}^{n},\mathsf{U},\mathcal{U},f,\sigma)$ such that $f(0_n,0_m) =0_n$, and $\sigma(0_n) = 0_{n\times{p}}$. 
	Suppose that the function $V$ in \eqref{V} satisfies (\ref{nonlinear ineq cond}) or (\ref{nonlinear ineq cond1}) for $\Sigma$. For any $x$ in a compact set $D$ and any $\upsilon\in\mathcal{U}$, we have 
	%\begin{align}\label{mismatch2}
	 	$\EE \left[\left\Vert\traj{\xi}{x}{\upsilon}(t)-\traj{\ol \xi}{x}{\upsilon}(t)\right\Vert^2\right] \le h(\sigma,t),$	 
	%\end{align}
	where 
	%\begin{small}
	\begin{align}\nonumber
	h(\sigma,t)=&\frac{2\left\Vert\sqrt{P}\right\Vert^2n^2\min\{n,p\}Z^2\mathsf{e}^{\frac{-2\widetilde{\kappa}t}{q}}}{\lambda_{\min}^2({P})\widetilde{\kappa}}\left(\lambda_{\max}({P})\left(1-\textsf{e}^{\frac{-\widetilde{\kappa} t}{2}}\right)\sup_{x\in D}\left\{\Vert{x}\Vert^2\right\}+\frac{\left\Vert\sqrt{P}\right\Vert^2L_u^2}{\textsf{e}\widetilde{\kappa}}\sup_{u\in\mathsf{U}}\left\{\Vert{u}\Vert^2\right\}t\right),
	\end{align}
	%\end{small}
	$Z$ is the Lipschitz constant, as introduced in Definition \ref{Def_control_sys}, and $\ol \xi_{x\upsilon}$ is the solution of the ODE $\traj{\dot{\ol \xi}}{x}{\upsilon}(t)=f\left(\traj{\ol \xi}{x}{\upsilon}(t),\upsilon(t)\right)$ starting from the initial condition $x$. It can be readily verified that the nonnegative valued function $h$ tends to zero as $t \ra 0$, $t\ra+\infty$, or as $Z \ra 0$.	
\end{lemma}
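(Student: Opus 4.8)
The plan is to run Itô's lemma twice on the quadratic form underlying $V$ and reduce everything to two scalar linear differential inequalities closed by Gronwall. Throughout I fix $q\in\{1,2\}$ and treat both at once, since the role of $q$ is only to set the dissipation rate: with $\Psi(z):=\tfrac12 z^TPz$, equation \eqref{V} gives $\bigl(V(x,x')\bigr)^{2/q}=\tfrac2q\Psi(x-x')$, so \eqref{nonlinear ineq cond} — or, via the mean value theorem, \eqref{nonlinear ineq cond1} — is precisely the dissipation bound $(x-x')^TP(f(x,u)-f(x',u))+\tfrac12\|\sqrt P(\sigma(x)-\sigma(x'))\|_F^2\le-\tfrac{2\widetilde\kappa}{q}\Psi(x-x')$ for all $x,x',u$, which is the origin of the factor $\mathsf{e}^{-2\widetilde\kappa t/q}$ in $h$.

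First I would bound the second moment of the solution process itself. Itô's lemma along $\xi_{x\upsilon}$ gives $\diff\Psi(\xi_{x\upsilon})=\bigl(\xi_{x\upsilon}^TPf(\xi_{x\upsilon},\upsilon)+\tfrac12\Tr(\sigma(\xi_{x\upsilon})^TP\sigma(\xi_{x\upsilon}))\bigr)\diff t+\xi_{x\upsilon}^TP\sigma(\xi_{x\upsilon})\diff W_t$. Applying the dissipation bound with $x=\xi_{x\upsilon}$, $x'=0_n$ and using $\sigma(0_n)=0_{n\times p}$, the term $\tfrac12\|\sqrt P\sigma(\xi_{x\upsilon})\|_F^2=\tfrac12\Tr(\sigma(\xi_{x\upsilon})^TP\sigma(\xi_{x\upsilon}))$ cancels the Itô correction exactly, leaving — after the stochastic integral is removed in expectation — $\tfrac{\diff}{\diff t}\EE[\Psi(\xi_{x\upsilon})]\le-\tfrac{2\widetilde\kappa}{q}\EE[\Psi(\xi_{x\upsilon})]+\EE[\xi_{x\upsilon}^TPf(0_n,\upsilon)]$. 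Because $f(0_n,0_m)=0_n$, the Lipschitz bound gives $\|f(0_n,\upsilon(s))\|\le L_u\sup_{u\in\mathsf U}\|u\|$; a Cauchy--Schwarz/Young estimate in the $P$-inner product absorbs half the decay, and Gronwall --- together with $\EE[\|\xi_{x\upsilon}(t)\|^2]\le\lambda_{\min}(P)^{-1}\EE[\xi_{x\upsilon}^TP\xi_{x\upsilon}]$ and the bound on $\Psi(x)$ by a dimensional multiple of $\lambda_{\max}(P)\sup_{x\in D}\|x\|^2$ --- yields an explicit estimate of the shape $\EE[\|\xi_{x\upsilon}(t)\|^2]\le A\,\mathsf{e}^{-\widetilde\kappa t/q}+B$ with $A\propto\tfrac{\lambda_{\max}(P)}{\lambda_{\min}(P)}\sup_{x\in D}\|x\|^2$ and $B\propto\tfrac{\|\sqrt P\|^2L_u^2}{\widetilde\kappa^2\lambda_{\min}(P)}\sup_{u\in\mathsf U}\|u\|^2$.

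Next I would estimate the mismatch $e(t):=\xi_{x\upsilon}(t)-\ol{\xi}_{x\upsilon}(t)$, which starts at $e(0)=0$ since both trajectories issue from $x$. As $\ol{\xi}_{x\upsilon}$ solves the noise-free ODE, $e$ has diffusion $\sigma(\xi_{x\upsilon})$, so Itô on $\Psi(e)$ gives $\diff\Psi(e)=\bigl(e^TP(f(\xi_{x\upsilon},\upsilon)-f(\ol{\xi}_{x\upsilon},\upsilon))+\tfrac12\Tr(\sigma(\xi_{x\upsilon})^TP\sigma(\xi_{x\upsilon}))\bigr)\diff t+e^TP\sigma(\xi_{x\upsilon})\diff W_t$. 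The dissipation bound with $x=\xi_{x\upsilon}$, $x'=\ol{\xi}_{x\upsilon}$ dominates the drift part by $-\tfrac{2\widetilde\kappa}{q}\Psi(e)-\tfrac12\|\sqrt P(\sigma(\xi_{x\upsilon})-\sigma(\ol{\xi}_{x\upsilon}))\|_F^2$; discarding the nonpositive last term, the surviving source is $\tfrac12\|\sqrt P\sigma(\xi_{x\upsilon})\|_F^2\le\tfrac12\|\sqrt P\|^2 n\min\{n,p\}Z^2\|\xi_{x\upsilon}\|^2$, where $\sigma(0_n)=0_{n\times p}$, the $Z$-Lipschitz property, and Frobenius-versus-operator-norm inequalities are used (this is where $n\min\{n,p\}$ — and, after substituting the previous estimate, $n^2\min\{n,p\}$, $\lambda_{\min}^{-2}(P)$, $\lambda_{\max}(P)$ — enter). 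Taking expectations, inserting the bound for $\EE[\|\xi_{x\upsilon}(s)\|^2]$, and solving $\tfrac{\diff}{\diff t}\EE[\Psi(e)]\le-\tfrac{2\widetilde\kappa}{q}\EE[\Psi(e)]+(\text{explicit in }s)$ from zero initial value gives $\EE[\Psi(e(t))]\le\int_0^t\mathsf{e}^{-\frac{2\widetilde\kappa}{q}(t-s)}(\cdots)\diff s$; evaluating this convolution in closed form and dividing by $\lambda_{\min}(P)$ produces exactly the displayed $h(\sigma,t)$. The limiting assertions are then read off: the bracket in $h$ vanishes at $t=0$, the prefactor $\mathsf{e}^{-2\widetilde\kappa t/q}$ beats the at-most-linear bracket as $t\to+\infty$, and $h$ is proportional to $Z^2$.

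I expect the main obstacle to be technical rather than conceptual. First, justifying the Dynkin step: the Itô stochastic integrals must be genuine (not merely local) martingales so that they vanish in expectation, which requires finite higher moments of $\xi_{x\upsilon}$; these follow from the standing Lipschitz/linear-growth assumptions, but to be rigorous one should establish the inequalities along a localizing sequence of stopping times and pass to the limit with Fatou, using the very bounds being proved. Second, the norm bookkeeping: $\|\cdot\|$ in the statement is the $\infty$-norm, whereas the $P$-weighted quantities are naturally Euclidean and $\sigma$ is measured in the Frobenius norm, so the dimension-dependent constants $n^2\min\{n,p\}$, $\lambda_{\min}^2(P)$, $\lambda_{\max}(P)$ and the $\mathsf{e}\widetilde\kappa$ in the denominators of $h$ are artifacts of these conversions — tedious but routine. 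The two genuinely load-bearing points are the exact cancellation of the trace term enabled by $\sigma(0_n)=0_{n\times p}$ (used once at $x'=0_n$, once at $x'=\ol{\xi}_{x\upsilon}$) and the identification of $2\widetilde\kappa/q$ as the effective decay rate of the quadratic form.
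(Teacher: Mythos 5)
Your overall strategy coincides with the paper's: apply It\^o/Dynkin to the quadratic form $\tfrac q2 V^{2/q}(\xi_{x\upsilon},\ol\xi_{x\upsilon})=\tfrac12(\xi_{x\upsilon}-\ol\xi_{x\upsilon})^TP(\xi_{x\upsilon}-\ol\xi_{x\upsilon})$, use the dissipation inequality \eqref{nonlinear ineq cond} at $(x,x')=(\xi_{x\upsilon},\ol\xi_{x\upsilon})$ after adding and subtracting the diffusion-difference trace and discarding the nonpositive remainder, bound the surviving source by $\tfrac12\Vert\sqrt P\Vert^2 n\min\{n,p\}Z^2\,\EE[\Vert\xi_{x\upsilon}\Vert^2]$ via the Lipschitz property and $\sigma(0_n)=0_{n\times p}$, and then insert a second-moment estimate on $\xi_{x\upsilon}$ of the form $A\mathsf{e}^{-ct}+B$. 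Your way of getting that last estimate (a second It\^o/Young argument at $x'=0_n$) differs only cosmetically from the paper, which invokes the ISS-type bound furnished by Theorem \ref{the_Lya} applied against the identically-zero trajectory; likewise your writing of the subtracted term as $\tfrac12\Vert\sqrt P(\sigma(\xi)-\sigma(\ol\xi))\Vert_F^2$ is the form literally matching \eqref{nonlinear ineq cond}. Up to this point the two arguments are essentially identical.

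The gap is in the final step. From $\tfrac{d}{dt}\EE[\Psi(e)]\le-\tfrac{2\widetilde\kappa}{q}\EE[\Psi(e)]+g(t)$ with $e(0)=0$ you correctly obtain the variation-of-constants bound $\EE[\Psi(e(t))]\le\int_0^t\mathsf{e}^{-\frac{2\widetilde\kappa}{q}(t-s)}g(s)\,ds$, but this does \emph{not} ``evaluate in closed form'' to the displayed $h$. The stated $h$ has the structure $\mathsf{e}^{-2\widetilde\kappa t/q}\int_0^t(\text{source bound})(s)\,ds$, i.e.\ the decay factor pulled outside the whole time integral (this is exactly where the bracket $\lambda_{\max}(P)\big(1-\mathsf{e}^{-\widetilde\kappa t/2}\big)\sup_{x\in D}\Vert x\Vert^2+\cdots\,t$ comes from in \eqref{up_bound2}, via the Gronwall step the paper performs at \eqref{lem2}). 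Since $\int_0^t\mathsf{e}^{-c(t-s)}g(s)\,ds\ge \mathsf{e}^{-ct}\int_0^t g(s)\,ds$ for $g\ge0$, your convolution bound is strictly weaker than the paper's expression; in particular, the constant part of the source (proportional to $\sup_{u\in\mathsf U}\Vert u\Vert^2$) contributes $\tfrac{q}{2\widetilde\kappa}\big(1-\mathsf{e}^{-2\widetilde\kappa t/q}\big)$ times a constant, which converges to a strictly positive limit as $t\to+\infty$, whereas the displayed $h$ vanishes. Hence the closing claims you ``read off'' — notably $h\to0$ as $t\to+\infty$ — are properties of the paper's formula, not of the bound your argument actually produces. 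To prove the lemma as stated you would need to establish the stronger estimate $\EE[\Psi(e(t))]\le \mathsf{e}^{-2\widetilde\kappa t/q}\,\widehat h(t,\sigma)$ with $\widehat h(t,\sigma)=\int_0^t g(s)\,ds$, as the paper asserts; this does not follow from your convolution inequality (the comparison goes the wrong way), so the last identification is a genuine missing step — and, incidentally, it is precisely the most delicate point of the paper's own proof, so it cannot be dismissed as routine bookkeeping.
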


The proof of Lemma \ref{lem:moment est} is provided in the Appendix.

For a linear stochastic control system $\Sigma$, 
the following corollary tailors the result in Lemma \ref{lem:moment est} obtaining possibly a less conservative expression for function $h$, based on parameters of drift and diffusion. 

\begin{corollary}\label{corollary1}
Consider a stochastic control system $\Sigma=(\mathbb{R}^{n},\mathsf{U},\mathcal{U},f,\sigma)$, where for all $x\in\R^n$ and $u\in\R^m$, $f(x)\Let Ax+Bu$, for some $A\in\R^{n\times{n}},B\in\R^{n\times{m}}$, and $\sigma(x) \Let \left[\sigma_{1} x~\sigma_{2}x~\cdots~ \sigma_{p} x\right]$, for some $\sigma_{i} \in \R^{n \times n}$. Suppose that the function $V$ in \eqref{V} satisfies (\ref{LMI}) for $\Sigma$.
For any $x$ in a compact set $D$ and any $\upsilon\in\mathcal{U}$, we have 
%\begin{align}\N_0
	 	$\EE \left[\left\Vert\traj{\xi}{x}{\upsilon}(t)-\traj{\ol \xi}{x}{\upsilon}(t)\right\Vert^2\right] \le h(\sigma,t),$	 
	%\end{align}
	where 
%\begin{small}
\begin{align}\nonumber
	h(\sigma,t)=\frac{n\lambda_{\max}\left(\sum\limits_{i = 1}^{p} \sigma^T_iP\sigma_i\right)\mathsf{e}^{-\widehat{\kappa} t}}{\lambda_{\min}({P})}\int_0^t\left(\left\Vert\mathsf{e}^{As}\right\Vert\sup_{x\in{D}}\left\{\left\Vert x\right\Vert\right\}+\left(\int_0^{s}\left\Vert\mathsf{e}^{Ar}B\right\Vert dr\right)\sup_{u\in\mathsf{U}}\left\{\Vert{u}\Vert\right\}\right)^2ds,
	\end{align}
%\end{small}
where $\ol \xi_{x\upsilon}$ is the solution of the ODE $\traj{\dot{\ol \xi}}{x}{\upsilon}(t)=A\traj{\ol \xi}{x}{\upsilon}(t)+B\upsilon(t)$ starting from the initial condition $x$.
\end{corollary}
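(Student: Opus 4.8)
The plan is to bound the mean-square mismatch directly, by coupling the solution process $\xi_{x\upsilon}$ of $\Sigma$ with the solution $\ol\xi_{x\upsilon}$ of the noise-free ODE through the \emph{same} input $\upsilon$ and the \emph{same} initial condition $x$, and then running an It\^o/Gronwall argument that exploits the linear structure. First observe that, by Corollary \ref{corollary}, the LMI \eqref{LMI} is precisely the hypothesis under which $V$ in \eqref{V} is a $\delta$-ISS-M$_q$ Lyapunov function for the linear $\Sigma$ (while $f(0_n,0_m)=0_n$ and $\sigma(0_n)=0_{n\times p}$ hold automatically here), so Corollary \ref{corollary1} is indeed a specialization of Lemma \ref{lem:moment est}; what remains is to redo the estimate of the Appendix so as to take advantage of $f(x,u)=Ax+Bu$ and $\sigma(x)=[\sigma_1 x~\cdots~\sigma_p x]$.

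Set $e(t)\Let\xi_{x\upsilon}(t)-\ol\xi_{x\upsilon}(t)$. Subtracting $\dot{\ol\xi}_{x\upsilon}=A\ol\xi_{x\upsilon}+B\upsilon$ from \eqref{eq0} with $f,\sigma$ as above, the process $e$ solves the linear It\^o SDE
\begin{align*}
\diff e(t)=A\,e(t)\,\diff t+\sigma(\xi_{x\upsilon}(t))\,\diff W_t,\qquad e(0)=0~~\PP\text{-a.s.}
\end{align*}
Two facts follow at once. First, taking expectations in the integrated form and using that $\int_0^{\cdot}\sigma(\xi_{x\upsilon})\,\diff W$ is a true martingale under the standing moment bounds, $\EE[e(t)]$ solves $\tfrac{d}{dt}\EE[e(t)]=A\,\EE[e(t)]$ with $\EE[e(0)]=0$, hence $\EE[e(t)]=0$ for all $t\geq0$. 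Second, $\ol\xi_{x\upsilon}(s)=\mathsf{e}^{As}x+\int_0^s\mathsf{e}^{A(s-r)}B\upsilon(r)\,dr$, so that for $x\in D$ we get the deterministic bound $\Vert\ol\xi_{x\upsilon}(s)\Vert\le\Vert\mathsf{e}^{As}\Vert\sup_{x\in D}\Vert x\Vert+\big(\int_0^s\Vert\mathsf{e}^{Ar}B\Vert\,dr\big)\sup_{u\in\mathsf U}\Vert u\Vert$, which is exactly the quantity squared under the integral in $h(\sigma,t)$.

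Now apply It\^o's formula to $t\mapsto e(t)^TPe(t)$; since only $\xi_{x\upsilon}$ carries noise, only the $\partial_{x,x}$ second-order block enters and the drift equals $e^T(PA+A^TP)e+\Tr\!\big(\sigma(\xi_{x\upsilon})^TP\sigma(\xi_{x\upsilon})\big)$. By \eqref{LMI} together with $\sum_{i=1}^p\sigma_i^TP\sigma_i\succeq0$ one has $PA+A^TP\preceq-\widehat\kappa P$, and $\Tr(\sigma(x)^TP\sigma(x))=x^T\big(\sum_{i=1}^p\sigma_i^TP\sigma_i\big)x$ by the form of $\sigma$. Writing $\xi_{x\upsilon}=\ol\xi_{x\upsilon}+e$, expanding this quadratic form and taking expectations, the martingale term and the cross term $2\ol\xi_{x\upsilon}(t)^T\big(\sum_i\sigma_i^TP\sigma_i\big)\EE[e(t)]$ vanish (second fact above), while the pure-$e$ contribution $e^T\big(\sum_i\sigma_i^TP\sigma_i\big)e$ is exactly cancelled by the part of \eqref{LMI} that was discarded, leaving
\begin{align*}
\tfrac{d}{dt}\,\EE\!\left[e(t)^TPe(t)\right]\le-\widehat\kappa\,\EE\!\left[e(t)^TPe(t)\right]+\ol\xi_{x\upsilon}(t)^T\Big(\textstyle\sum_{i=1}^p\sigma_i^TP\sigma_i\Big)\ol\xi_{x\upsilon}(t).
\end{align*}
Since $e(0)=0$, Gronwall's inequality (used in the same form as in the proof of Theorem \ref{the_Lya}) gives $\EE[e(t)^TPe(t)]\le\mathsf{e}^{-\widehat\kappa t}\int_0^t\ol\xi_{x\upsilon}(s)^T\big(\sum_i\sigma_i^TP\sigma_i\big)\ol\xi_{x\upsilon}(s)\,ds$; bounding the quadratic form by $\lambda_{\max}\big(\sum_i\sigma_i^TP\sigma_i\big)\Vert\ol\xi_{x\upsilon}(s)\Vert_2^2\le n\,\lambda_{\max}\big(\sum_i\sigma_i^TP\sigma_i\big)\Vert\ol\xi_{x\upsilon}(s)\Vert^2$, inserting the bound on $\Vert\ol\xi_{x\upsilon}(s)\Vert$ from the previous paragraph, and finally using $\EE[\Vert e(t)\Vert^2]\le\EE[\Vert e(t)\Vert_2^2]\le\lambda_{\min}(P)^{-1}\EE[e(t)^TPe(t)]$, yields exactly the stated $h(\sigma,t)$. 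The limits are then immediate: as $Z\to0$ the $\sigma_i$ vanish so $\lambda_{\max}(\sum_i\sigma_i^TP\sigma_i)\to0$; as $t\to0$ the integral vanishes; and as $t\to+\infty$, Hurwitzness of $A$ (a consequence of \eqref{LMI}) makes $s\mapsto\Vert\mathsf{e}^{As}\Vert$ decay and $\int_0^s\Vert\mathsf{e}^{Ar}B\Vert\,dr$ remain bounded, so $\mathsf{e}^{-\widehat\kappa t}\int_0^t(\cdots)^2\,ds\to0$.

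The delicate point is the It\^o step: unlike in Definition \ref{delta_PISS_Lya}, the second argument of the Lyapunov function here is the noise-free trajectory, so the relevant infinitesimal generator contains only the $\partial_{x,x}$ block and one must be careful not to import the spurious $\sigma(\ol\xi_{x\upsilon})$ cross-terms present in $\mathcal{L}^{u,u'}$. The reason everything closes up is precisely the linearity of $\sigma$: the diffusion coefficient of $e$ is $\sigma(\xi_{x\upsilon})$, a linear map, so after substituting $\xi_{x\upsilon}=\ol\xi_{x\upsilon}+e$ the quadratic-form estimate splits cleanly into a deterministic $\ol\xi_{x\upsilon}$-part and an $e$-part that is either absorbed by the LMI slack $\sum_i\sigma_i^TP\sigma_i$ or annihilated in expectation via $\EE[e(t)]=0$. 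Everything else — converting between $\Vert\cdot\Vert$ and the Euclidean/$P$-weighted norms, and solving the scalar differential inequality — is routine bookkeeping.
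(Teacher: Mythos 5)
Your argument is correct and is essentially the paper's own proof: the paper reaches the same bound by starting from inequality \eqref{lem1} in the proof of Lemma \ref{lem:moment est} and evaluating the residual trace term $\tfrac12\EE\bigl[\Tr\bigl(\sigma\sigma^T(\xi_{x\upsilon})P\bigr)-\Tr\bigl(\sigma\sigma^T(\xi_{x\upsilon}-\ol\xi_{x\upsilon})P\bigr)\bigr]=\ol\xi_{x\upsilon}^T\bigl(\sum_{i=1}^p\sigma_i^TP\sigma_i\bigr)\ol\xi_{x\upsilon}$, which is exactly your split of $\xi_{x\upsilon}=\ol\xi_{x\upsilon}+e$ with the $e$-quadratic term absorbed by \eqref{LMI} and the cross term killed by $\EE[e(t)]=0$ (a fact the paper uses implicitly via the linear drift), followed by the same variation-of-constants bound on $\Vert\ol\xi_{x\upsilon}(s)\Vert$, the same Gronwall step producing the $\mathsf{e}^{-\widehat\kappa t}$ factor, and the same $\lambda_{\min}(P)$, $n\lambda_{\max}\bigl(\sum_i\sigma_i^TP\sigma_i\bigr)$ norm conversions. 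Your only deviation is organizational (applying It\^o directly to $e^TPe$ rather than quoting \eqref{lem1}), so there is nothing to flag.
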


The proof of Corollary \ref{corollary1} is provided in the Appendix.

\section{Symbolic Models}\label{symbolic}

\subsection{Systems}
We employ the notion of system \cite{paulo} to describe both stochastic control systems as well as their symbolic models. 
\begin{definition}
\label{system}
A system $S$ is a tuple 
$S=(X,X_0,U,\longrightarrow,Y,H),$  
where
$X$ is a set of states, 
$X_0\subseteq X$ is a set of initial states, 
$U$ is a set of inputs,
$\longrightarrow\subseteq X\times U\times X$ is a transition relation,
$Y$ is a set of outputs, and
$H:X\rightarrow Y$ is an output map.
\end{definition}

A transition \mbox{$(x,u,x')\in\longrightarrow$} is also denoted by $x\rTo^ux'$. For a transition $x\rTo^ux'$, state $x'$ is called a \mbox{$u$-successor}, or simply a successor, of state $x$. 
%We denote by $\mathbf{Post}_{u}(x)$ the set of \mbox{$u$-successors} of a state $x$ and by $U(x)$ the set of inputs $u\in{U}$ for which $\mathbf{Post}_{u}(x)$ is nonempty. 
For technical reasons, we assume that for any $x\in X$ and $u\in U$, there exists some $u$-successor of $x$ ---  
let us remark that this is always the case for the considered systems later in this paper. 

System $S$ is said to be
\begin{itemize}
\item \textit{metric}, if the output set $Y$ is equipped with a metric
$\mathbf{d}:Y\times Y\rightarrow\mathbb{R}_{0}^{+}$;
\item \textit{finite}, if $X$ is a finite set;
\item \textit{deterministic}, if for any state $x\in{X}$ and any input $u$, there exists exactly one \mbox{$u$-successor}. 
%\item \textit{nondeterministic}, if it is not deterministic.
\end{itemize}

For a system $S=(X,X_0,U,\longrightarrow,Y,H)$ and given any state $x_0\in{X_0}$, a finite state run generated from $x_0$ is a finite sequence of transitions: $x_0\rTo^{u_0}x_1\rTo^{u_1}x_2\rTo^{u_2}\cdots\rTo^{u_{n-2}}x_{n-1}\rTo^{u_{n-1}}x_n,$
such that $x_i\rTo^{u_i}x_{i+1}$ for all $0\leq i<n$. A finite state run can be directly extended to an infinite state run as well.  

\subsection{System relations}
We recall the notion of approximate (bi)simulation relation, introduced in \cite{girard}, 
which is useful when analyzing or synthesizing controllers for deterministic systems.  
 
\begin{definition}\label{APSR}
Let \mbox{$S_{a}=(X_{a},X_{a0},U_{a},\rTo_{a},Y_a,H_{a})$} and 
\mbox{$S_{b}=(X_{b},X_{b0},U_{b},\rTo_{b},Y_b,H_{b})$} be metric systems with the 
same output sets $Y_a=Y_b$ and metric $\mathbf{d}$.
For $\varepsilon\in\mathbb{R}^{+}$, 
a relation 
\mbox{$R\subseteq X_{a}\times X_{b}$} is said to be an $\varepsilon$-approximate simulation relation from $S_{a}$ to $S_{b}$ 
if the following three conditions are satisfied:
\begin{itemize}
\item[(i)] for every $x_{a0}\in{X_{a0}}$, there exists $x_{b0}\in{X_{b0}}$ with $(x_{a0},x_{b0})\in{R}$;
\item[(ii)] for every $(x_{a},x_{b})\in R$ we have \mbox{$\mathbf{d}(H_{a}(x_{a}),H_{b}(x_{b}))\leq\varepsilon$};
\item[(iii)] for every $(x_{a},x_{b})\in R$ we have that \mbox{$x_{a}\rTo_{a}^{u_a}x'_{a}$ in $S_a$} implies the existence of \mbox{$x_{b}\rTo_{b}^{u_b}x'_{b}$} in $S_b$ satisfying $(x'_{a},x'_{b})\in R$.
\end{itemize}  
A relation $R\subseteq X_a\times X_b$ is said to be an $\varepsilon$-approximate bisimulation relation between $S_a$ and $S_b$
if $R$ is an $\varepsilon$-approximate simulation relation from $S_a$ to $S_b$ and
$R^{-1}$ is an $\varepsilon$-approximate simulation relation from $S_b$ to $S_a$.

System $S_{a}$ is $\varepsilon$-approximately simulated by $S_{b}$, or $S_b$ $\varepsilon$-approximately simulates $S_a$, 
denoted by \mbox{$S_{a}\preceq_{\mathcal{S}}^{\varepsilon}S_{b}$}, if there exists
an $\varepsilon$-approximate simulation relation from $S_{a}$ to $S_{b}$.
System $S_{a}$ is $\varepsilon$-approximate bisimilar to $S_{b}$, denoted by \mbox{$S_{a}\cong_{\mathcal{S}}^{\varepsilon}S_{b}$}, if there exists
an $\varepsilon$-approximate bisimulation relation $R$ between $S_{a}$ and $S_{b}$.
\end{definition}

Note that when $\varepsilon=0$, 
condition (ii) in the above definition is modified as $(x_a,x_b)\in R$ if and only if 
$H_{a}(x_{a})=H_{b}(x_{b})$, and $R$ becomes an exact  simulation relation, as introduced in \cite{milner}. 
Similarly, whenever $\varepsilon=0$, 
$R$ possibly becomes an exact bisimulation relation.

\section{Symbolic Models for Stochastic Control Systems}\label{existence}
This section contains the main contribution of the paper. 
We show that for any stochastic control system $\Sigma$ admitting a $\delta$-ISS-M$_q$ Lyapunov function, 
and for any precision level $\varepsilon\in\R^+$, we can construct a finite system that is $\varepsilon$-approximately bisimilar to $\Sigma$. 
In order to do so, we use the notion of system as an abstract representation of a stochastic control system, capturing all the information contained in it.  
More precisely, given a stochastic control system $\Sigma=(\mathbb{R}^{n},\mathsf{U},\mathcal{U},f,\sigma)$, 
we define an associated metric system $S(\Sigma)=(X,X_{0},U,\rTo,Y,H),$ 
where:
\begin{itemize}
\item $X$ is the set of all $\R^n$-valued random variables defined on the probability space 
$(\Omega,\sigalg,\PP)$;
\item $X_{0}$ is the set of all $\R^n$-valued random variables that are measurable over the trivial sigma-algebra $\sigalg_0$, i.e., 
the system starts from a deterministic initial condition, 
which is equivalently a random variable with a Dirac probability distribution;
\item $U=\mathcal{U}$;
\item $x\rTo^{\upsilon} x'$ if $x$ and $x'$ are measurable in $\sigalg_{t}$ and $\sigalg_{t+\tau}$, respectively, for some $t \in \R^+_0$ and $\tau\in\R^+$, and 
there exists a solution process $\xi:\Omega\times\R_0^+\rightarrow\R^n$ of $\Sigma$ satisfying $\xi(t) = x$ and $\xi_{x \upsilon}(\tau) = x'$ $\PP$-a.s.;
\item $Y$ is the set of all $\R^n$-valued random variables defined on 
the probability space 
$(\Omega,\sigalg,\PP)$;
\item $H=1_{X}$.
\end{itemize}
We assume that the output set $Y$ is equipped with the natural metric $\mathbf{d}(y,y')=\left(\EE\left[\left\Vert y-y'\right\Vert^q\right]\right)^{\frac{1}{q}}$, for any $y,y'\in{Y}$ and some $q\geq1$. Let us remark that the set of states of $S(\Sigma)$ is uncountable and that $S(\Sigma)$ is a deterministic system in the sense of Definition \ref{system}, since (cf. Subsection \ref{II.B})  
the solution process of $\Sigma$
is uniquely determined. 

The results in this section rely on additional assumptions on model $\Sigma$ that are described next  
(however such assumptions are not required for the definitions and results in Sections \ref{sec2}, \ref{sec3}, and \ref{symbolic}).  
We restrict our attention to stochastic control systems \mbox{$\Sigma=(\mathbb{R}^{n},\mathsf{U},\mathcal{U},f,\sigma)$} with $f(0_n,0_m) = 0_n$, $\sigma(0_n) = 0_{n\times{p}}$, and input sets $\mathsf{U}$ that are assumed to be finite unions of boxes. 
We further restrict our attention to sampled-data stochastic control systems, where input curves belong to set $\mathcal{U}_\tau$ which contains only curves that are constant over intervals of length $\tau\in\R^+$, i.e.
%\begin{eqnarray}
% \nonumber
$$ \mathcal{U}_\tau=\Big\{\upsilon:\R_0^+\to \mathsf{U}\,\,\vert\,\,\upsilon(t)=\upsilon((k-1)\tau), t\in[(k-1)\tau,k\tau[, k\in\N\Big\}.$$
% \end{eqnarray}
 
Let us denote by $S_\tau(\Sigma)$ a sub-system of $S(\Sigma)$ obtained by selecting those transitions from $S(\Sigma)$ corresponding to solution processes of duration $\tau$ and to control inputs in $\mathcal{U}_\tau$. 
This can be seen as the time discretization or as the sampling of a process. 
More precisely, 
given a stochastic control system $\Sigma=(\mathbb{R}^{n},\mathsf{U},\mathcal{U}_\tau,f,\sigma)$, 
we define the associated metric system $S_\tau(\Sigma)=\left(X_\tau,X_{\tau0},U_\tau,\rTo_\tau,Y_\tau,H_\tau\right)$, 
where $X_\tau=X$, $X_{\tau0}=X_{0}$, $U_\tau=\mathcal{U}_\tau$, $Y_\tau=Y$, $H_\tau=H$, and 
\begin{itemize}
\item $x_\tau\rTo^{\upsilon_\tau}_\tau{x'_\tau}$ if $x_\tau$ and $x'_\tau$ are measurable, 
respectively, in $\sigalg_{k\tau}$ and $\sigalg_{(k+1)\tau}$ for some $k \in \N_0$, and 
there exists a solution process $\xi:\Omega\times\R_0^+\rightarrow\R^n$ of $\Sigma$ satisfying $\xi(k\tau) = x_\tau$ and $\xi_{x_\tau\upsilon_\tau}(\tau) = x'_\tau$ $\PP$-a.s..
\end{itemize}

Notice that a finite state run 
$x_{0}\rTo^{\upsilon_{0}}_{\tau}x_{1}\rTo^{\upsilon_1}_{\tau} \,...\, \rTo^{\upsilon_{N-1}}_{\tau} x_{N},$ of $S_{\tau}(\Sigma)$, where $\upsilon_i\in\mathcal{U}_\tau$ and $x_i=\xi_{x_{i-1}\upsilon_{i-1}}(\tau)$ for $i=1,\cdots,N$, captures the solution process of the stochastic control system $\Sigma$ at times $t=0,\tau,\cdots,N\tau$,  
started from the deterministic initial condition $x_{0}$ and resulting from a control input $\upsilon$ obtained by the concatenation of the input curves  
$\upsilon_{i}$ \big(i.e. $\upsilon(t)=\upsilon_{i-1}(t)$ for any $t\in [(i-1)\tau,i\,\tau[$\big), for $i=1,\cdots,N$.

Let us proceed introducing a fully symbolic system for the concrete model $\Sigma$. 
Consider a stochastic control system $\Sigma=(\mathbb{R}^{n},\mathsf{U},\mathcal{U}_\tau,f,\sigma)$ and a triple $\mathsf{q}=(\tau,\eta,\mu)$ of quantization parameters, where $\tau$ is the sampling time, $\eta$ is the state space quantization, and $\mu$ is the input set quantization. 
Given $\Sigma$ and $\mathsf{q}$, consider the following system:
\begin{equation}\label{T2}
S_{\mathsf{q}}(\Sigma)=(X_{\mathsf{q}},X_{\params0},U_{\mathsf{q}},\rTo_{\mathsf{q}},Y_{\mathsf{q}},H_{\mathsf{q}}),
\end{equation}
consisting of (cf. Notation in Subsection \ref{II.A}):
\begin{itemize}
\item $X_{\mathsf{q}}=[\R^n]_\eta$;
\item $X_{\params0}=[\R^n]_\eta$;
\item $U_{\mathsf{q}}=[\mathsf{U}]_\mu$;
\item $x_{\mathsf{q}}\rTo_{\mathsf{q}}^{u_{\mathsf{q}}}x'_{\mathsf{q}}$ if there exists a $x'_\params\in X_\params$ such that $\left\Vert\ol{\xi}_{x_{\mathsf{q}}u_{\mathsf{q}}}(\tau)-x'_{\mathsf{q}}\right\Vert\leq\eta$, where $\dot{\ol{\xi}}_{x_\params u_\params}(t)=f\left(\ol{\xi}_{x_\params u_\params}(t),u_\params(t)\right)$;
\item $Y_{\mathsf{q}}$ is the set of all $\R^n$-valued random variables defined on the probability space $(\Omega,\sigalg,\PP)$;
\item $H_{\mathsf{q}}=\imath:X_{\mathsf{q}}\hookrightarrow Y_{\mathsf{q}}$.
\end{itemize}

Note that we have abused notation by identifying $u_{\mathsf{q}}\in[\mathsf{U}]_\mu$ with the constant input curve with domain $[0,~\tau[$ and value $u_{\mathsf{q}}$. Notice that the proposed abstraction $S_\params(\Sigma)$ is a deterministic system in the sense of Definition \ref{system}. In order to establish an approximate bisimulation relation, the output set $Y_\params$ is defined similarly to the stochastic system $S_\tau(\Sigma)$. Therefore, in the definition of $H_\params$, the inclusion map $\imath$ is meant, with a slight abuse of notation, as a mapping from a deterministic grid point to a random variable with a Dirac probability distribution centered at the grid point. As argued in \cite{paulo}, there is no loss of generality to alternatively assume that $Y_\params=X_\params$ and $H_\params=1_{X_\params}$. For later use, we denote by $\mathsf{X}_{x_{\params0}u}:\N_0\ra X_\params$ a state run of $S_\params(\Sigma)$ from initial condition $\mathsf{X}_{x_{\params0}u}(0)=x_{\params0}$ and under input sequence $u:\N_0\ra[\mathsf{U}]_\mu$.

The transition relation of $S_\params(\Sigma)$ is well defined in the sense that for every 
$x_\params\in[\R^n]_\eta$ and every $u_\params\in[\mathsf{U}]_\mu$ there always exists $x'_\params\in[\R^n]_\eta$ 
such that $x_\params\rTo_\params^{u_\params}x'_\params$. 
This can be seen since by definition of $[\R^n]_\eta$, 
for any $\widehat{x}\in \R^n$ there always exists a state $\widehat{x}'\in [\R^n]_\eta$ such 
that $\Vert \widehat{x}-\widehat{x}'\Vert\leq\eta$. 
Hence, for $\ol{\xi}_{x_\params u_\params}(\tau)$ there always exists a state 
$x'_\params\in [\R^n]_\eta$ satisfying $\left\Vert\ol{\xi}_{x_{\mathsf{q}}u_{\mathsf{q}}}(\tau)-x'_{\mathsf{q}}\right\Vert\leq\eta$.

In order to show the first main result of this work, 
we raise a supplementary assumption on the $\delta$-ISS-M$_q$ Lyapunov function $V$ as follows: 
\begin{equation}\label{supplement}
\vert V(x,y)-V(x,z)\vert\leq\widehat\gamma(\Vert y-z\Vert),
\end{equation}
for any $x,y,z\in\R^n$, and some $\mathcal{K}_\infty$ and concave function $\widehat\gamma$. This assumption is not restrictive, provided $V$ is restricted to a compact subset of $\R^n\times\R^n$. Indeed, for all $x,y,z\in D$, where $D\subset\R^n$ is compact, by applying the mean value theorem to the function $y\rightarrow V(x,y)$, one gets $$\left\vert V(x,y)-V(x,z)\right\vert\leq\widehat\gamma\left(\Vert y-z\Vert\right),~\text{where}~~\widehat\gamma({r})=\left(\max_{x,y\in D\backslash\Delta}\left\Vert\frac{\partial{V}(x,y)}{\partial{y}}\right\Vert\right)r.$$ 
In particular, for the $\delta$-ISS-M$_1$ Lyapunov function $V$ defined in (\ref{V}), 
we obtain explicitly that $\widehat\gamma({r})= \frac{\lambda_{\max}\left(P\right)}{\sqrt{\lambda_{\min}\left(P\right)}} r$ \cite[Proposition 10.5]{paulo}. 
We can now present the first main result of the paper, 
which relates the existence of a $\delta$-ISS-M$_q$ Lyapunov function to the construction of a symbolic model.  
\begin{theorem}\label{main_theorem}
Consider a stochastic control system $\Sigma=(\mathbb{R}^{n},\mathsf{U},\mathcal{U}_\tau,f,\sigma)$, admitting a $\delta$-ISS-M$_q$ Lyapunov function $V$, of the form of (\ref{V}) or the one explained in Lemmas \ref{lemma3}. For any $\varepsilon\in\R^+$ and any triple $\mathsf{q}=(\tau,\eta,\mu)$ of quantization parameters satisfying $\mu\leq\boxspan(\mathsf{U})$ and
%\begin{small}
\begin{align}\label{bisim_cond1}
\overline\alpha\left(\eta^q\right)\leq\underline\alpha\left(\varepsilon^q\right),\\\label{bisim_cond}
\mathsf{e}^{-\kappa\tau}\underline\alpha\left(\varepsilon^q\right)+\frac{1}{\mathsf{e}\kappa}\rho(\mu)+\widehat\gamma\left(\left(h(\sigma,\tau)\right)^{\frac{1}{q}}+\eta\right)\leq\underline\alpha\left(\varepsilon^q\right),
\end{align}
%\end{small}
we have that \mbox{$S_{\mathsf{q}}(\Sigma)\cong_{\mathcal{S}}^{\varepsilon}S_{\tau}(\Sigma)$}.
\end{theorem}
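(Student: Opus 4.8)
The plan is to exhibit an explicit $\varepsilon$-approximate bisimulation relation $R \subseteq X_{\mathsf{q}} \times X_\tau$ between $S_{\mathsf{q}}(\Sigma)$ and $S_\tau(\Sigma)$ and verify the three conditions of Definition \ref{APSR} in both directions. The natural candidate, following the non-probabilistic template of \cite{girard2,majid4,pola}, is
\[
R = \Big\{ (x_{\mathsf{q}}, x_\tau) \in X_{\mathsf{q}} \times X_\tau \;\Big|\; \EE\big[V(x_\tau, x_{\mathsf{q}})\big] \leq \underline\alpha(\varepsilon^q) \Big\},
\]
where $x_{\mathsf{q}}$ is read, as per the abuse of notation in the definition of $H_{\mathsf{q}}$, as the Dirac random variable at the grid point. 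Condition (i) is immediate: for a deterministic initial state $x_{\tau 0} \in X_{\tau 0}$ (a Dirac at some point of $\R^n$), pick the nearest grid point $x_{\mathsf{q} 0} \in [\R^n]_\eta$, so $\|x_{\tau 0} - x_{\mathsf{q} 0}\| \leq \eta$ almost surely, hence $\EE[V(x_{\tau 0}, x_{\mathsf{q} 0})] \leq \overline\alpha(\eta^q) \leq \underline\alpha(\varepsilon^q)$ by property (ii) of Definition \ref{delta_PISS_Lya} and hypothesis \eqref{bisim_cond1}. Condition (ii), the output proximity, follows from property (ii) of the Lyapunov function and Jensen's inequality: $(x_{\mathsf{q}}, x_\tau) \in R$ gives $\underline\alpha(\EE[\|x_\tau - x_{\mathsf{q}}\|^q]) \leq \EE[\underline\alpha(\|x_\tau - x_{\mathsf{q}}\|^q)] \leq \EE[V(x_\tau, x_{\mathsf{q}})] \leq \underline\alpha(\varepsilon^q)$, so $\mathbf{d}(H_\tau(x_\tau), H_{\mathsf{q}}(x_{\mathsf{q}})) = (\EE[\|x_\tau - x_{\mathsf{q}}\|^q])^{1/q} \leq \varepsilon$.

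The substance is in the symmetric transition-matching condition (iii). Fix $(x_{\mathsf{q}}, x_\tau) \in R$ and a transition $x_\tau \rTo^{\upsilon_\tau}_\tau x'_\tau$ with $\upsilon_\tau \in \mathcal{U}_\tau$ constant over $[0,\tau[$. On the symbolic side I choose the input $u_{\mathsf{q}} \in [\mathsf{U}]_\mu$ nearest to the value of $\upsilon_\tau$, so $\|\upsilon_\tau - u_{\mathsf{q}}\| \leq \mu$ (using $\mu \leq \boxspan(\mathsf{U})$ so such a quantized input exists), and let $x'_{\mathsf{q}}$ be the grid point witnessing $\|\ol\xi_{x_{\mathsf{q}} u_{\mathsf{q}}}(\tau) - x'_{\mathsf{q}}\| \leq \eta$. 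The goal is $\EE[V(x'_\tau, x'_{\mathsf{q}})] \leq \underline\alpha(\varepsilon^q)$. I would decompose via the triangle-type estimate built from the supplementary assumption \eqref{supplement}:
\[
\EE[V(x'_\tau, x'_{\mathsf{q}})] \leq \EE\big[V\big(x'_\tau, \ol\xi_{x_{\mathsf{q}} u_{\mathsf{q}}}(\tau)\big)\big] + \widehat\gamma\big(\|\ol\xi_{x_{\mathsf{q}} u_{\mathsf{q}}}(\tau) - x'_{\mathsf{q}}\|\big),
\]
and then further insert the noise-free trajectory $\ol\xi_{x_\tau \upsilon_\tau}(\tau)$ started from $x_\tau$ as an intermediate point — bounding $|V(x'_\tau, \ol\xi_{x_{\mathsf{q}} u_{\mathsf{q}}}(\tau)) - V(x'_\tau, \ol\xi_{x_\tau \upsilon_\tau}(\tau))|$ is not directly covered by \eqref{supplement} since the first argument varies too, so instead I would write $\EE[V(x'_\tau,\ol\xi_{x_{\mathsf{q}} u_{\mathsf{q}}}(\tau))]$ by comparing $x'_\tau = \xi_{x_\tau \upsilon_\tau}(\tau)$ first with $\ol\xi_{x_\tau\upsilon_\tau}(\tau)$ (Lemma \ref{lemma3} or Lemma \ref{lem:moment est}, giving the $h(\sigma,\tau)$ term after passing through $\widehat\gamma$ and using $\EE[\|\xi - \ol\xi\|^q]^{1/q} \leq h(\sigma,\tau)^{1/q}$, noting $\widehat\gamma$ concave so Jensen applies), and then with $\ol\xi_{x_{\mathsf{q}} u_{\mathsf{q}}}(\tau)$ via the deterministic $\delta$-ISS estimate contained in the Lyapunov decrease: integrating condition (iii) of Definition \ref{delta_PISS_Lya} along the two noise-free flows and applying Gronwall exactly as in the proof of Theorem \ref{the_Lya} yields $V(\ol\xi_{x_\tau\upsilon_\tau}(\tau), \ol\xi_{x_{\mathsf{q}} u_{\mathsf{q}}}(\tau)) \leq \mathsf{e}^{-\kappa\tau} V(x_\tau, x_{\mathsf{q}}) + \frac{1}{\mathsf{e}\kappa}\rho(\mu)$. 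Taking expectations, chaining the pieces, and bounding $\widehat\gamma(h(\sigma,\tau)^{1/q} + \eta) \geq \widehat\gamma(\|\ol\xi_{x_{\mathsf{q}} u_{\mathsf{q}}}(\tau) - x'_{\mathsf{q}}\|) + (\text{noise term})$ by subadditivity of the concave $\widehat\gamma$ assembles
\[
\EE[V(x'_\tau, x'_{\mathsf{q}})] \leq \mathsf{e}^{-\kappa\tau}\underline\alpha(\varepsilon^q) + \tfrac{1}{\mathsf{e}\kappa}\rho(\mu) + \widehat\gamma\big(h(\sigma,\tau)^{1/q} + \eta\big) \leq \underline\alpha(\varepsilon^q)
\]
by hypothesis \eqref{bisim_cond}, so $(x'_{\mathsf{q}}, x'_\tau) \in R$.

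For the reverse direction, $R^{-1}$ must be an $\varepsilon$-approximate simulation from $S_{\mathsf{q}}(\Sigma)$ to $S_\tau(\Sigma)$. Conditions (i) and (ii) are symmetric (note $V$ need not be symmetric in its arguments, so one must be slightly careful, but property (ii) bounds $V$ both above and below by symmetric quantities, and \eqref{bisim_cond1}–\eqref{bisim_cond} are symmetric in form). For (iii): given $(x_{\mathsf{q}}, x_\tau) \in R$ and a symbolic transition $x_{\mathsf{q}} \rTo^{u_{\mathsf{q}}}_{\mathsf{q}} x'_{\mathsf{q}}$, one takes the \emph{same} input $\upsilon_\tau := u_{\mathsf{q}} \in [\mathsf{U}]_\mu \subseteq \mathsf{U}$ on the concrete side (so here $\|\upsilon_\tau - u_{\mathsf{q}}\| = 0$, even better), lets $x'_\tau = \xi_{x_\tau \upsilon_\tau}(\tau)$, and runs the identical chain of estimates; the resulting bound is $\leq \mathsf{e}^{-\kappa\tau}\underline\alpha(\varepsilon^q) + \widehat\gamma(h(\sigma,\tau)^{1/q} + \eta) \leq$ the right-hand side of \eqref{bisim_cond}, so $(x'_{\mathsf{q}}, x'_\tau) \in R$. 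The main obstacle I anticipate is the careful bookkeeping of the three-way comparison in condition (iii) — in particular ensuring that \eqref{supplement} is only ever invoked with a \emph{fixed} first argument, that the stochastic mismatch Lemma \ref{lemma3}/\ref{lem:moment est} is inserted at the right stage, and that the concavity/subadditivity of $\widehat\gamma$ together with Jensen are applied in the correct order so that the $h(\sigma,\tau)^{1/q}$ and $\eta$ contributions can be merged into the single term $\widehat\gamma(h(\sigma,\tau)^{1/q} + \eta)$ appearing in \eqref{bisim_cond}. The convexity of $\underline\alpha$ and concavity of $\overline\alpha$ (property (i) of Definition \ref{delta_PISS_Lya}) are exactly what let all the Jensen steps go through in the correct direction.
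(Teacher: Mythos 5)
Your overall skeleton matches the paper's: the same relation $R=\{(x_\tau,x_{\mathsf q}): \EE[V(x_\tau,x_{\mathsf q})]\le\underline\alpha(\varepsilon^q)\}$, the same treatment of conditions (i) and (ii) via \eqref{bisim_cond1}, convexity of $\underline\alpha$ and Jensen, and the same choice $\upsilon_\tau=u_{\mathsf q}$ in the reverse direction. The gap is in your three-way decomposition for condition (iii). You insert the noise-free trajectory $\ol\xi_{x_\tau\upsilon_\tau}(\tau)$ started from $x_\tau$ and propose (a) to absorb $\|x_\tau'-\ol\xi_{x_\tau\upsilon_\tau}(\tau)\|$ into $V$ and (b) to run a ``deterministic Gronwall'' between the two noise-free flows. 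Step (a) requires controlling the variation of $V$ in its \emph{first} argument, which assumption \eqref{supplement} does not give you (it only bounds $|V(x,y)-V(x,z)|$); for the general Lyapunov function of Lemma \ref{lemma3}, $V$ need not be symmetric, so you cannot trade arguments. Step (b) is also not justified: condition (iii) of Definition \ref{delta_PISS_Lya} bounds the full stochastic generator, drift term plus $\tfrac12\mathrm{Tr}(\cdots)$; to conclude the same decay for the drift term alone along the noise-free flows you would need the trace term to be nonnegative, which is not assumed. Moreover, Lemmas \ref{lemma3} and \ref{lem:moment est} are stated for a deterministic initial condition $x$ in a compact set, so applying them to the trajectory started from the random variable $x_\tau$ is not covered. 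Finally, if your chain produces the two error terms $\widehat\gamma\bigl((h(\sigma,\tau))^{1/q}\bigr)$ and $\widehat\gamma(\eta)$ separately, subadditivity of the concave $\widehat\gamma$ gives $\widehat\gamma(a)+\widehat\gamma(b)\ge\widehat\gamma(a+b)$, i.e.\ the inequality points the wrong way: hypothesis \eqref{bisim_cond} controls $\widehat\gamma\bigl((h(\sigma,\tau))^{1/q}+\eta\bigr)$ and therefore does not dominate your sum.

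The paper's decomposition avoids all of this by keeping every comparison in the \emph{second} argument of $V$. One first compares $x'_\tau=\xi_{x_\tau\upsilon_\tau}(\tau)$ with the \emph{stochastic} solution $\xi_{x_{\mathsf q}u_{\mathsf q}}(\tau)$ launched from the grid point (viewed as a Dirac initial condition); this pair consists of two genuine solution processes, so the integrated generator estimate \eqref{V-bound} applies verbatim and yields $\mathsf{e}^{-\kappa\tau}\underline\alpha(\varepsilon^q)+\tfrac{1}{\mathsf{e}\kappa}\rho(\mu)$. The remaining discrepancy $\EE[V(x'_\tau,x'_{\mathsf q})-V(x'_\tau,\xi_{x_{\mathsf q}u_{\mathsf q}}(\tau))]$ has a fixed first argument, so a single application of \eqref{supplement} bounds it by $\EE[\widehat\gamma(\|\xi_{x_{\mathsf q}u_{\mathsf q}}(\tau)-x'_{\mathsf q}\|)]$; one then applies Jensen (concavity of $\widehat\gamma$), the triangle inequality \emph{inside} the argument of the monotone $\widehat\gamma$, and Lemma \ref{lemma3} or \ref{lem:moment est} from the deterministic grid point $x_{\mathsf q}$, which merges the noise mismatch and the quantization error into exactly the single term $\widehat\gamma\bigl((h(\sigma,\tau))^{1/q}+\eta\bigr)$ appearing in \eqref{bisim_cond}. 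You should restructure your condition-(iii) argument along these lines; as written, the estimate does not close.
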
 

%Recall that $S_{\tau}(\Sigma)$ denotes an alternative (sampled) representation of $\Sigma$. 

It can be readily seen that when we are interested in the dynamics of $\Sigma$, initialized on a compact $D\subset\R^n$ of the form of finite union of boxes and for a given precision $\varepsilon$, 
there always exists a sufficiently large value of $\tau$ and small enough values of $\eta$ and $\mu$, such that $\eta\leq\boxspan(D)$ and such that the conditions in (\ref{bisim_cond1}) and (\ref{bisim_cond}) are satisfied. 
On the other hand, for a given fixed sampling time $\tau$, one can find sufficiently small values of $\eta$ and $\mu$ satisfying $\eta\leq\boxspan(D)$, (\ref{bisim_cond1}) and (\ref{bisim_cond}), as long as the precision $\varepsilon$ is lower bounded by:
%\begin{small}
\begin{equation}\label{lower_bound}
\varepsilon>\left(\ul\alpha^{-1}\left(\frac{\widehat\gamma\left(\left(h(\sigma,\tau)\right)^{\frac{1}{q}}\right)}{1-\mathsf{e}^{-\kappa\tau}}\right)\right)^{\frac{1}{q}}.
\end{equation}
%\end{small}
One can easily verify that the lower bound on $\varepsilon$ in (\ref{lower_bound}) goes to zero as $\tau$ goes to infinity or as $Z \ra 0$, where $Z$ is the Lipschitz constant, introduced in Definition \ref{Def_control_sys}. Furthermore, one can try to minimize the lower bound on $\varepsilon$ in (\ref{lower_bound}) by appropriately choosing a $\delta$-ISS-M$_q$ Lyapunov function $V$ (cf. Section \ref{sec3}).

\begin{proof}
We start by proving that \mbox{$S_{\tau}(\Sigma)\preceq^{\varepsilon}_\mathcal{S}S_{\params}(\Sigma)$}.
Consider the relation $R\subseteq X_{\tau}\times X_{\params}$ defined by 
{$\left(x_{\tau},x_{\params}\right)\in R$}
if and only if 
{$\mathbb{E}\left[V\left(H_{\tau}(x_{\tau}),H_{\params}(x_{\params})\right)\right]=\mathbb{E}\left[V\left(x_{\tau},x_{\params}\right)\right]\leq\underline\alpha\left(\varepsilon^q\right)$}. 
Since \mbox{$X_{\tau 0}\subseteq\bigcup_{p\in[\mathbb{R}^n]_{\eta}}\mathcal{B}_{\eta}(p)$}, 
for every $x_{\tau 0}\in{X_{\tau 0}}$ there always exists \mbox{$x_{\params 0}\in{X}_{\params 0}$} such that $\Vert{x_{\tau0}}-x_{\params0}\Vert\leq\eta$. Then,
\begin{equation}\nonumber
\mathbb{E}\left[V({x_{\tau0}},x_{\params0})\right]=V({x_{\tau0}},x_{\params0})\leq\overline\alpha(\Vert x_{\tau0}-x_{\params0}\Vert^q)\leq\overline\alpha\left(\eta^q\right)\leq\underline\alpha\left(\varepsilon^q\right),
\end{equation}
because of (\ref{bisim_cond1}) and since $\overline\alpha$ is a $\mathcal{K}_\infty$ function.
Hence, \mbox{$\left(x_{\tau0},x_{\params0}\right)\in{R}$} and condition (i) in Definition \ref{APSR} is satisfied. Now consider any \mbox{$\left(x_{\tau},x_{\params}\right)\in R$}. Condition (ii) in Definition \ref{APSR} is satisfied because
\begin{equation} 
\label{convexity}
\left(\mathbb{E}\left[\Vert x_{\tau}-x_{\params}\Vert^q\right]\right)^{\frac{1}{q}}\leq\left(\underline\alpha^{-1}\left(\mathbb{E}\left[V(x_{\tau},x_{\params})\right]\right)\right)^{\frac{1}{q}}\leq\varepsilon.
\end{equation}
We used the convexity assumption of $\underline\alpha$ and the Jensen inequality \cite{oksendal} to show the inequalities in (\ref{convexity}). Let us now show that condition (iii) in Definition
\ref{APSR} holds. Consider any \mbox{$\upsilon_{\tau}\in {U}_{\tau}$}. Choose an input \mbox{$u_{\params}\in U_{\params}$} satisfying
\begin{equation}
\Vert \upsilon_{\tau}-u_{\params}\Vert_{\infty}=\Vert \upsilon_{\tau}(0)-u_{\params}(0)\Vert\leq\mu.\label{b01}
\end{equation}
Note that the existence of such $u_\params$ is guaranteed by $\mathsf{U}$ being a finite union of boxes and by 
the inequality $\mu\leq\boxspan(\mathsf{U})$ which guarantees that $\mathsf{U}\subseteq\bigcup_{p\in[\mathsf{U}]_{\mu}}\mathcal{B}_{{\mu}}(p)$. Consider the transition \mbox{$x_{\tau}\rTo^{\upsilon_{\tau}}_{\tau} x'_{\tau}=\xi_{x_{\tau}\upsilon_{\tau}}(\tau)$} in $S_{\tau}(\Sigma)$. Since $V$ is a \mbox{$\delta$-ISS-M$_q$} Lyapunov function for $\Sigma$, in light of \eqref{V-bound} as well as \eqref{b01}, we have
%\begin{small}
\begin{align}\label{b02}
\mathbb{E}\left[V(x'_{\tau},\xi_{x_{\params}u_{\params}}(\tau))\right] &\leq \EE\left[V(x_\tau,x_q)\right] \mathsf{e}^{-\kappa\tau} + \frac{1}{\mathsf{e}\kappa} \rho(\|\upsilon_{\tau}-u_{\params}\|_\infty)\leq \underline\alpha\left(\varepsilon^q\right) \mathsf{e}^{-\kappa\tau} + \frac{1}{\mathsf{e}\kappa}\rho(\mu).
\end{align}
%\end{small}
Since \mbox{$\R^n\subseteq\bigcup_{p\in[\mathbb{R}^n]_{\eta}}\mathcal{B}_{\eta}(p)$}, there exists a \mbox{$x'_{\params}\in{X}_{\params}$} such that 
\begin{equation}
\left\Vert\ol{\xi}_{x_{\params}u_{\params}}(\tau)-x'_{\params}\right\Vert\leq\eta, \label{b04}
\end{equation}
which, by the definition of $S_\params(\Sigma)$, implies the existence of $x_{\params}\rTo^{u_{\params}}_{\params}x'_{\params}$ in $S_{\params}(\Sigma)$. 
Using Lemmas \ref{lemma3} or \ref{lem:moment est}, the concavity of $\widehat\gamma$, the Jensen inequality \cite{oksendal}, the inequalities (\ref{supplement}), (\ref{bisim_cond}), (\ref{b02}), (\ref{b04}), and triangle inequality, we obtain
%\begin{small}
\begin{align*}
\mathbb{E}\left[V(x'_{\tau},x'_{\params})\right]&=\mathbb{E}\left[V(x'_\tau,\xi_{x_{\params}u_{\params}}(\tau))+V(x'_{\tau},x'_\params)-V(x'_\tau,\xi_{x_{\params}u_{\params}}(\tau))\right]\\ \notag
&=  \mathbb{E}\left[V(x'_{\tau},\xi_{x_{\params}u_{\params}}(\tau))\right]+\mathbb{E}\left[V(x'_{\tau},x'_\params)-V(x'_\tau,\xi_{x_{\params}u_{\params}}(\tau))\right]\\\notag&\leq\underline\alpha\left(\varepsilon^q\right)\mathsf{e}^{-\kappa\tau}+\frac{1}{\mathsf{e}\kappa}\rho(\mu)+\mathbb{E}\left[\widehat\gamma\left(\left\Vert\xi_{x_{\params}u_{\params}}(\tau)-x'_{\params}\right\Vert\right)\right]\\\notag
&\leq\underline\alpha\left(\varepsilon^q\right)\mathsf{e}^{-\kappa\tau}+\frac{1}{\mathsf{e}\kappa}\rho(\mu)+\widehat\gamma\left(\mathbb{E}\left[\left\Vert\xi_{x_{\params}u_{\params}}(\tau)-\ol{\xi}_{x_{\params}u_{\params}}(\tau)+\ol{\xi}_{x_{\params}u_{\params}}(\tau)-x'_{\params}\right\Vert\right]\right)\\\notag
&\leq\underline\alpha\left(\varepsilon^q\right)\mathsf{e}^{-\kappa\tau}+\frac{1}{\mathsf{e}\kappa}\rho(\mu)+\widehat\gamma\left(\mathbb{E}\left[\left\Vert\xi_{x_{\params}u_{\params}}(\tau)-\ol{\xi}_{x_{\params}u_{\params}}(\tau)\right\Vert\right]+\left\Vert\ol{\xi}_{x_{\params}u_{\params}}(\tau)-x'_{\params}\right\Vert\right)\\\notag
&\leq\underline\alpha\left(\varepsilon^q\right)\mathsf{e}^{-\kappa\tau}+\frac{1}{\mathsf{e}\kappa}\rho(\mu)+\widehat\gamma\left(\left(h(\sigma,\tau)\right)^{\frac{1}{q}}+\eta\right)\leq\underline\alpha\left(\varepsilon^q\right).
\end{align*}
%\end{small}
Therefore, we conclude that \mbox{$\left(x'_{\tau},x'_{\params}\right)\in{R}$} and that condition (iii) in Definition \ref{APSR} holds. 

Now we prove \mbox{$S_{\params}(\Sigma)\preceq^{\varepsilon}_{\mathcal{S}}S_{\tau}(\Sigma)$} implying that $R^{-1}$ is a suitable
$\varepsilon$-approximate simulation relation from $S_{\params}(\Sigma)$ to $S_{\tau}(\Sigma)$. 
Consider the relation \mbox{$R\subseteq X_{\tau}\times X_{\params}$}, defined in the first part of the proof. For every \mbox{$x_{\params0}\in{X}_{\params0}$}, by choosing \mbox{$x_{\tau0}=x_{\params0}$}, we have $V\left(x_{\tau0},x_{\params0}\right)=0$ and \mbox{$\left(x_{\tau0},x_{\params0}\right)\in{R}$} and condition (i) in Definition \ref{APSR} is satisfied. Now consider any \mbox{$\left(x_{\tau},x_{\params}\right)\in R$}. Condition (ii) in Definition \ref{APSR} is satisfied because
\begin{equation} 
\label{convexity1}
\left(\mathbb{E}\left[\Vert x_{\tau}-x_{\params}\Vert^q\right]\right)^{\frac{1}{q}}\leq\left(\underline\alpha^{-1}\left(\mathbb{E}\left[V(x_{\tau},x_{\params})\right]\right)\right)^{\frac{1}{q}}\leq\varepsilon.
\end{equation}
We used the convexity assumption of $\underline\alpha$ and the Jensen inequality \cite{oksendal} to show the inequalities in (\ref{convexity1}). Let us now show that condition (iii) in Definition \ref{APSR} holds. 
Consider any \mbox{$u_{\params}\in U_{\params}$}. Choose the input \mbox{$\upsilon_{\tau}=u_\params$} and consider \mbox{$x'_{\tau}=\xi_{x_{\tau}\upsilon_{\tau}}(\tau)$ in $S_{\tau}(\Sigma)$}.
Since $V$ is a \mbox{$\delta$-ISS-M$_q$} Lyapunov function for $\Sigma$, one obtains:
\begin{equation}
\mathbb{E}\left[V(x'_{\tau},\xi_{x_{\params}u_{\params}}(\tau))\right]\leq \mathsf{e}^{-\kappa\tau}\EE[V\left(x_{\tau},x_\params\right)]\leq\mathsf{e}^{-\kappa\tau}\underline\alpha\left(\varepsilon^q\right).\label{b03}%
\end{equation}

Using Lemmas \ref{lemma3} or \ref{lem:moment est}, the definition of $S_\params(\Sigma)$, the concavity of $\widehat\gamma$, the Jensen inequality \cite{oksendal}, the inequalities (\ref{supplement}), (\ref{bisim_cond}), (\ref{b03}), and the triangle inequality, we obtain
%\begin{small}
\begin{align*}\nonumber
\mathbb{E}\left[V\left(x'_{\tau},x'_{\params}\right)\right]&=\mathbb{E}\left[V\left(x'_{\tau},\xi_{x_{\params}u_{\params}}(\tau)\right)+V\left(x'_{\tau},x'_{\params}\right)-V\left(x'_\tau,\xi_{x_{\params}u_{\params}}(\tau)\right)\right]\\\notag &= \mathbb{E}\left[V(x'_{\tau},\xi_{x_{\params}u_{\params}}(\tau))\right]+\mathbb{E}\left[V(x'_{\tau},x'_{\params})-V(x'_\tau,\xi_{x_{\params}u_{\params}}(\tau))\right]\\\notag&\leq\mathsf{e}^{-\kappa\tau}\underline\alpha\left(\varepsilon^q\right)+\mathbb{E}\left[\widehat\gamma\left(\left\Vert\xi_{x_{\params}u_{\params}}(\tau)-x'_{\params}\right\Vert\right)\right]\\\notag&\leq\mathsf{e}^{-\kappa\tau}\underline\alpha\left(\varepsilon^q\right)+\widehat\gamma\left(\mathbb{E}\left[\left\Vert\xi_{x_{\params}u_{\params}}(\tau)-\ol{\xi}_{x_{\params}u_{\params}}(\tau)+\ol{\xi}_{x_{\params}u_{\params}}(\tau)-x'_{\params}\right\Vert\right]\right)\\\notag&\leq\mathsf{e}^{-\kappa\tau}\underline\alpha\left(\varepsilon^q\right)+\widehat\gamma\left(\mathbb{E}\left[\left\Vert\xi_{x_{\params}u_{\params}}(\tau)-\ol{\xi}_{x_{\params}u_{\params}}(\tau)\right\Vert\right]+\left\Vert\ol{\xi}_{x_{\params}u_{\params}}(\tau)-x'_{\params}\right\Vert\right)\\\notag&\leq\mathsf{e}^{-\kappa\tau}\underline\alpha\left(\varepsilon^q\right)+\widehat\gamma\left(\left(h(\sigma,\tau)\right)^{\frac{1}{q}}+\eta\right)\leq\underline\alpha\left(\varepsilon^q\right).
\end{align*}
%\end{small}
Therefore, we conclude that \mbox{$(x'_{\tau},x'_{\params})\in{R}$} and condition (iii) in Definition \ref{APSR} holds. 
\end{proof}

Note that the results in \cite{girard2}, as in the following corollary, are fully recovered by the statement in Theorem \ref{main_theorem} if the stochastic control system $\Sigma$ is not affected by any noise, implying that $h(\sigma,\tau)$ is identically zero and that the $\delta$-ISS-M$_q$ property reduces to the $\delta$-ISS property.

\begin{corollary}\label{theorem1}%\cite{girard2}
Let $\Sigma=(\mathbb{R}^{n},\mathsf{U},\mathcal{U}_\tau,f,0_{n\times{p}})$ be a $\delta$-ISS control system admitting a $\delta$-ISS Lyapunov function $V$. For any $\varepsilon\in\R^+$, and any triple $\mathsf{q}=(\tau,\eta,\mu)$ of quantization parameters satisfying $\mu\leq\text{span}(\mathsf{U})$ and
\begin{align}\label{bisim_cond10}
\overline\alpha\left(\eta\right)\leq\underline\alpha(\varepsilon),\\\label{bisim_cond0}
\mathsf{e}^{-\kappa\tau}\underline\alpha(\varepsilon)+\frac{1}{\mathsf{e}\kappa}\rho(\mu)+\widehat\gamma\left(\eta\right)\leq\underline\alpha(\varepsilon),
\end{align}
one obtains \mbox{$S_{\mathsf{q}}(\Sigma)\cong_{\mathcal{S}}^{\varepsilon}S_{\tau}(\Sigma)$}.
\end{corollary}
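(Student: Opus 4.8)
The plan is to obtain this statement as the noise-free specialisation of Theorem \ref{main_theorem}, since in the absence of the diffusion term almost all of the required work has already been carried out. First I would observe that when $\sigma\equiv 0_{n\times p}$ the stochastic differential equation \eqref{eq0} degenerates to the ordinary differential equation $\dot\xi=f(\xi,\upsilon)$, so every solution process is deterministic and $\xi_{x\upsilon}(t)=\ol\xi_{x\upsilon}(t)$ $\PP$-a.s.; consequently the sampled system $S_\tau(\Sigma)$ collapses to an ordinary non-probabilistic deterministic system, the mismatch bound may be taken to be $h(\sigma,\tau)\equiv 0$ (equivalently, the Lipschitz constant $Z$ of the zero diffusion vanishes), and the expectation operator appearing throughout Section \ref{existence} is the identity, so the Jensen-inequality steps and the convexity/concavity requirements on $\ul\alpha,\ol\alpha$ in Definition \ref{delta_PISS_Lya} become vacuous. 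Under this reading a classical $\delta$-ISS Lyapunov function is exactly a $\delta$-ISS-M$_1$ Lyapunov function, and correspondingly \eqref{delta_PISS} reduces to the $\delta$-ISS bound \eqref{delta_ISS}.

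Next I would match the parameter conditions: with $q=1$ and $h(\sigma,\tau)\equiv 0$, the inequalities \eqref{bisim_cond1}--\eqref{bisim_cond} become $\ol\alpha(\eta)\le\ul\alpha(\varepsilon)$ and $\mathsf{e}^{-\kappa\tau}\ul\alpha(\varepsilon)+\frac{1}{\mathsf{e}\kappa}\rho(\mu)+\widehat\gamma(\eta)\le\ul\alpha(\varepsilon)$, i.e.\ precisely \eqref{bisim_cond10}--\eqref{bisim_cond0}; together with $\mu\le\boxspan(\mathsf{U})$ these are exactly the hypotheses under which the proof of Theorem \ref{main_theorem} delivers $S_{\mathsf{q}}(\Sigma)\cong_{\mathcal{S}}^{\varepsilon}S_{\tau}(\Sigma)$.

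The version I would actually write out, rather than literally quoting Theorem \ref{main_theorem}, is a verbatim replay of that theorem's proof with every $\EE[\cdot]$ and every occurrence of $h(\sigma,\tau)$ deleted: take $R=\{(x_\tau,x_{\mathsf{q}}):V(x_\tau,x_{\mathsf{q}})\le\ul\alpha(\varepsilon)\}$; verify condition (i) of Definition \ref{APSR} by choosing a grid point within distance $\eta$ of the given initial state and using property (ii) of Definition \ref{delta_PISS_Lya} together with \eqref{bisim_cond10}; verify condition (ii) directly from $\ul\alpha(\|x_\tau-x_{\mathsf{q}}\|)\le V(x_\tau,x_{\mathsf{q}})\le\ul\alpha(\varepsilon)$; and verify condition (iii) in both directions using the deterministic Gronwall estimate $V(\ol\xi_{x\upsilon}(\tau),\ol\xi_{x'\upsilon'}(\tau))\le\mathsf{e}^{-\kappa\tau}V(x,x')+\frac{1}{\mathsf{e}\kappa}\rho(\|\upsilon-\upsilon'\|_\infty)$ (Theorem \ref{the_Lya} with $\sigma=0$), the supplementary bound \eqref{supplement}, the fact that $\ol\xi_{x_{\mathsf{q}}u_{\mathsf{q}}}(\tau)$ lies within $\eta$ of some element of $[\R^n]_\eta$, and the triangle inequality, following the two displayed chains of inequalities in the proof of Theorem \ref{main_theorem} step for step. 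I do not anticipate a genuine obstacle; the one point worth double-checking is that none of those steps secretly used the specific form \eqref{V} of $V$ or the non-triviality of $h$ — and they do not, precisely because $\xi=\ol\xi$ in the noise-free case, which is why a \emph{general} $\delta$-ISS Lyapunov function (and not merely one of the forms in Lemma \ref{lemma3} or \eqref{V}) suffices here. The statement is, moreover, exactly the result of \cite{girard2}, to which it could alternatively be attributed.
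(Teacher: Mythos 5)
Your proposal is correct and follows essentially the same route as the paper, which obtains this corollary directly as the noise-free specialization of Theorem \ref{main_theorem} with $h(\sigma,\tau)\equiv 0$ and $q=1$ (the expectations and Jensen steps trivializing since $\xi=\ol\xi$). Your additional observation---that the restriction to Lyapunov functions of the form \eqref{V} or of Lemma \ref{lemma3} can be dropped because those hypotheses were only needed to bound $h$, which vanishes here---is accurate and is precisely why the corollary may be stated for a general $\delta$-ISS Lyapunov function, recovering the result of \cite{girard2}.
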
 

The next main theorem provides a result that is similar  to that in Theorem \ref{main_theorem}, 
which is however not obtained by explicit use of $\delta$-ISS-M$_q$ Lyapunov functions, 
but by using functions $\beta$ and $\gamma$ as in (\ref{delta_PISS}). 

\begin{theorem}\label{main_theorem2}
Consider a $\delta$-ISS-M$_q$ stochastic control system $\Sigma=(\mathbb{R}^{n},\mathsf{U},\mathcal{U}_\tau,f,\sigma)$, satisfying (\ref{mismatch1}). For any $\varepsilon\in\R^+$, and any triple $\mathsf{q}=(\tau,\eta,\mu)$ of quantization parameters satisfying $\mu\leq\boxspan(\mathsf{U})$ and
%\begin{small}
\begin{align}\label{bisim_cond2}
\left(\beta\left(\varepsilon^q,\tau\right)+\gamma(\mu)\right)^{\frac{1}{q}}+\left(h(\sigma,\tau)\right)^{\frac{1}{q}}+\eta\leq\varepsilon,
\end{align}
%\end{small}
we have \mbox{$S_{\mathsf{q}}(\Sigma)\cong_{\mathcal{S}}^{\varepsilon}S_{\tau}(\Sigma)$}.
\end{theorem}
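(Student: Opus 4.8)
The plan is to follow the same two-part scheme as in the proof of Theorem \ref{main_theorem}, but to work \emph{directly} with the moment metric $\mathbf{d}$ on $Y$ instead of routing the argument through a Lyapunov function; this has the advantage of dispensing entirely with the supplementary assumption (\ref{supplement}) and with the convexity/concavity structure of $\underline\alpha,\overline\alpha$. Concretely, I would take as candidate relation $R\subseteq X_\tau\times X_{\params}$ defined by $(x_\tau,x_{\params})\in R$ if and only if $\big(\EE[\|x_\tau-x_{\params}\|^q]\big)^{1/q}\le\varepsilon$, and prove that $R$ is an $\varepsilon$-approximate bisimulation relation between $S_{\params}(\Sigma)$ and $S_\tau(\Sigma)$, i.e. that both $R$ and $R^{-1}$ are $\varepsilon$-approximate simulation relations.

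For $S_\tau(\Sigma)\preceq_{\mathcal{S}}^{\varepsilon}S_{\params}(\Sigma)$: condition (i) of Definition \ref{APSR} holds because $[\R^n]_\eta$ is an $\eta$-covering of $\R^n$, so any deterministic $x_{\tau0}$ admits a grid point $x_{\params0}$ with $\|x_{\tau0}-x_{\params0}\|\le\eta\le\varepsilon$ (the inequality $\eta\le\varepsilon$ being a consequence of (\ref{bisim_cond2})); condition (ii) is immediate from the definition of $R$ and of the metric $\mathbf{d}(y,y')=(\EE[\|y-y'\|^q])^{1/q}$. For condition (iii), given $(x_\tau,x_{\params})\in R$ and a transition $x_\tau\rTo^{\upsilon_\tau}_\tau x'_\tau=\xi_{x_\tau\upsilon_\tau}(\tau)$, I would choose $u_{\params}\in U_{\params}$ with $\|\upsilon_\tau-u_{\params}\|_\infty=\|\upsilon_\tau(0)-u_{\params}\|\le\mu$ (possible since $\mu\le\boxspan(\mathsf{U})$ forces $\mathsf{U}\subseteq\bigcup_{p\in[\mathsf{U}]_\mu}\mathcal{B}_\mu(p)$), and then a successor grid point $x'_{\params}$ with $\|\ol\xi_{x_{\params}u_{\params}}(\tau)-x'_{\params}\|\le\eta$, which by definition of $S_{\params}(\Sigma)$ realises the transition $x_{\params}\rTo^{u_{\params}}_{\params}x'_{\params}$. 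The crux is then the Minkowski ($L^q$) triangle inequality applied to the three-term decomposition
\[
x'_\tau-x'_{\params}=\big(\xi_{x_\tau\upsilon_\tau}(\tau)-\xi_{x_{\params}u_{\params}}(\tau)\big)+\big(\xi_{x_{\params}u_{\params}}(\tau)-\ol\xi_{x_{\params}u_{\params}}(\tau)\big)+\big(\ol\xi_{x_{\params}u_{\params}}(\tau)-x'_{\params}\big),
\]
which gives $\big(\EE[\|x'_\tau-x'_{\params}\|^q]\big)^{1/q}$ bounded by the sum of three terms: the first is $\le\big(\beta(\varepsilon^q,\tau)+\gamma(\mu)\big)^{1/q}$ by the $\delta$-ISS-M$_q$ inequality (\ref{delta_PISS}) together with monotonicity of $\beta(\cdot,\tau)\in\mathcal{K}_\infty$ (so that $\EE[\|x_\tau-x_{\params}\|^q]\le\varepsilon^q$ may be inserted in the first argument of $\beta$); the second is $\le\big(h(\sigma,\tau)\big)^{1/q}$ by (\ref{mismatch1}); and the third is $\le\eta$ since it is deterministic. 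Invoking (\ref{bisim_cond2}) the sum is $\le\varepsilon$, whence $(x'_\tau,x'_{\params})\in R$.

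For the reverse direction $S_{\params}(\Sigma)\preceq_{\mathcal{S}}^{\varepsilon}S_\tau(\Sigma)$ (i.e. $R^{-1}$ is an $\varepsilon$-approximate simulation relation), condition (i) holds by taking $x_{\tau0}=x_{\params0}$ (distance $0$), and condition (ii) is as before. For condition (iii), given $(x_\tau,x_{\params})\in R$ and a transition $x_{\params}\rTo^{u_{\params}}_{\params}x'_{\params}$, I would choose $\upsilon_\tau=u_{\params}$ and $x'_\tau=\xi_{x_\tau\upsilon_\tau}(\tau)$; the identical three-term split now bounds the first term by $\big(\beta(\varepsilon^q,\tau)+\gamma(0)\big)^{1/q}=\big(\beta(\varepsilon^q,\tau)\big)^{1/q}\le\big(\beta(\varepsilon^q,\tau)+\gamma(\mu)\big)^{1/q}$ (using $\gamma\in\mathcal{K}_\infty$, hence $\gamma(0)=0$ and $\gamma(\mu)\ge0$), so (\ref{bisim_cond2}) again yields $(x'_\tau,x'_{\params})\in R$ and completes the argument.

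I expect the only genuinely delicate points to be (a) invoking Minkowski's inequality to legitimise splitting the $q$-th moment of the three-term sum (valid precisely because $q\ge1$), and (b) the monotonicity of $\beta$ in its first argument used to replace the moment $\EE[\|x_\tau-x_{\params}\|^q]$ by $\varepsilon^q$ inside $\beta$; both are routine, and in fact this proof is shorter than that of Theorem \ref{main_theorem} because neither a Lyapunov function nor the auxiliary convexity/concavity machinery is needed.
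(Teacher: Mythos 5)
Your proposal is correct and coincides with the paper's own proof of Theorem \ref{main_theorem2}: the paper uses exactly the same relation $R$ defined via the moment metric, the same choices of $u_{\params}$, $x'_{\params}$ and $\upsilon_\tau=u_{\params}$, and the same three-term Minkowski decomposition bounded by the $\delta$-ISS-M$_q$ estimate \eqref{delta_PISS}, by \eqref{mismatch1}, and by the grid quantization $\eta$, concluding via \eqref{bisim_cond2}. The Lyapunov-function and convexity/concavity machinery you deliberately avoid is only used in the paper for Theorem \ref{main_theorem}, not here, so there is no genuine difference in route.
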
 

It can be readily seen that when we are interested in the dynamics of $\Sigma$, initialized on a compact $D\subset\R^n$ of the form of finite union of boxes and for a given precision $\varepsilon$, 
there always exists a sufficiently large value of $\tau$ and small values of $\eta$ and $\mu$ such that $\eta\leq\boxspan(D)$ and the condition in (\ref{bisim_cond2}) are satisfied. 
However, unlike the result in Theorem \ref{main_theorem}, 
notice that here for a given fixed sampling time $\tau$, 
one may not find any values of $\eta$ and $\mu$ satisfying (\ref{bisim_cond2}) because the quantity $\left(\beta\left(\varepsilon^q,\tau\right)\right)^{\frac{1}{q}}$ may be larger than $\varepsilon$.  
As long as there exists a triple $\mathsf{q}$, satisfying (\ref{bisim_cond2}), the lower bound of precision $\varepsilon$ can be computed by solving the following inequality with respect to $\varepsilon$: $\varepsilon-\left(\beta\left(\varepsilon^q,\tau\right)\right)^{\frac{1}{q}}>\left(h(\sigma,\tau)\right)^{\frac{1}{q}}$.
In this case, one can easily verify that the lower bound on $\varepsilon$ goes to zero as $\tau$ goes to infinity, 
as $\tau$ goes to zero (only if $\left(\beta\left(\varepsilon^q,0\right)\right)^{\frac{1}{q}}\leq\varepsilon$), 
or as $Z \ra 0$, where $Z$ is the Lipschitz constant introduced in Definition \ref{Def_control_sys}. 
The symbolic model $S_\params(\Sigma)$, computed by using the quantization parameters $\mathsf{q}$ provided in Theorem \ref{main_theorem2} whenever existing,  is likely to have fewer states than the model computed by using the quantization parameters provided in Theorem \ref{main_theorem}, 
or may provide a better lower bound on $\varepsilon$ for a fixed sampling time $\tau$. 
We refer the interested reader to the case studies section for a detailed comparison between the results of Theorems \ref{main_theorem} and \ref{main_theorem2} on some practical examples. 

\begin{proof}
We start by proving \mbox{$S_{\tau}(\Sigma)\preceq^{\varepsilon}_\mathcal{S}S_{\params}(\Sigma)$}.
Consider the relation $R\subseteq X_{\tau}\times X_{\params}$ defined by 
{$\left(x_{\tau},x_{\params}\right)\in R$}
if and only if 
{$\left(\mathbb{E}\left[\left\Vert H_{\tau}(x_{\tau})-H_{\params}(x_{\params})\right\Vert^q\right]\right)^{\frac{1}{q}}=\left(\mathbb{E}\left[\left\Vert x_{\tau}-x_{\params}\right\Vert^q\right]\right)^{\frac{1}{q}}\leq\varepsilon$}. 
Since \mbox{$X_{\tau 0}\subseteq\bigcup_{p\in[\mathbb{R}^n]_{\eta}}\mathcal{B}_{\eta}(p)$}, 
for every $x_{\tau 0}\in{X_{\tau 0}}$ there always exists \mbox{$x_{\params 0}\in{X}_{\params 0}$} such that $\Vert{x_{\tau0}}-x_{\params0}\Vert\leq\eta$.
Then,
\begin{equation}\nonumber
\left(\mathbb{E}\left[\left\Vert{x_{\tau0}}-x_{\params0}\right\Vert^q\right]\right)^{\frac{1}{q}}=\left(\left\Vert{x_{\tau0}}-x_{\params0}\right\Vert^q\right)^{\frac{1}{q}}\leq\eta\leq\varepsilon,
\end{equation}
because of (\ref{bisim_cond2}).
Hence, \mbox{$\left(x_{\tau0},x_{\params0}\right)\in{R}$} and condition (i) in Definition \ref{APSR} is satisfied. Now consider any \mbox{$\left(x_{\tau},x_{\params}\right)\in R$}. Condition (ii) in Definition \ref{APSR} is satisfied by the definition of $R$. Let us now show that condition (iii) in Definition
\ref{APSR} holds. Consider any \mbox{$\upsilon_{\tau}\in {U}_{\tau}$}. Choose an input \mbox{$u_{\params}\in U_{\params}$} satisfying
\begin{equation}
\Vert \upsilon_{\tau}-u_{\params}\Vert_{\infty}=\Vert \upsilon_{\tau}(0)-u_{\params}(0)\Vert\leq\mu.\label{b10}
\end{equation}
Note that the existence of such $u_\params$ is guaranteed by $\mathsf{U}$ being a finite union of boxes and by 
the inequality $\mu\leq\boxspan(\mathsf{U})$ which guarantees that $\mathsf{U}\subseteq\bigcup_{p\in[\mathsf{U}]_{\mu}}\mathcal{B}_{{\mu}}(p)$. Consider the transition \mbox{$x_{\tau}\rTo^{\upsilon_{\tau}}_{\tau} x'_{\tau}=\xi_{x_{\tau}\upsilon_{\tau}}(\tau)$} in $S_{\tau}(\Sigma)$. It follows from the $\delta$-ISS-M$_q$ assumption on $\Sigma$ and (\ref{b10}) that: 
\begin{align}\label{b20}
\mathbb{E}\left[\left\Vert x'_{\tau}-\xi_{x_{\params}u_{\params}}(\tau)\right\Vert^q\right] &\leq \beta\left(\EE\left[\left\Vert x_\tau-x_q\right\Vert^q\right],\tau\right) + \gamma(\|\upsilon_{\tau}-u_{\params}\|_\infty)\leq \beta\left(\varepsilon^q,\tau\right) + \gamma(\mu).
\end{align}
Since \mbox{$\R^n\subseteq\bigcup_{p\in[\mathbb{R}^n]_{\eta}}\mathcal{B}_{\eta}(p)$}, there exists \mbox{$x'_{\params}\in{X}_{\params}$} such that 
\begin{equation}
\left\Vert\ol{\xi}_{x_{\params}u_{\params}}(\tau)-x'_{\params}\right\Vert\leq\eta, \label{b40}
\end{equation}
which, by the definition of $S_\params(\Sigma)$, implies the existence of $x_{\params}\rTo^{u_{\params}}_{\params}x'_{\params}$ in $S_{\params}(\Sigma)$. 
Using Lemmas \ref{lemma3} or \ref{lem:moment est}, (\ref{bisim_cond2}), (\ref{b20}), (\ref{b40}), and triangle inequality, we obtain
%\begin{small}
\begin{align*}
\left(\mathbb{E}\left[\left\Vert x'_{\tau}-x'_{\params}\right\Vert^q\right]\right)^{\frac{1}{q}}&=\left(\mathbb{E}\left[\left\Vert x'_\tau-\xi_{x_{\params}u_{\params}}(\tau)+\xi_{x_{\params}u_{\params}}(\tau)-\ol{\xi}_{x_{\params}u_{\params}}(\tau)+\ol{\xi}_{x_{\params}u_{\params}}(\tau)-x'_\params\right\Vert^q\right]\right)^{\frac{1}{q}}\\ \notag
&\leq  \left(\mathbb{E}\left[\left\Vert x'_{\tau}-\xi_{x_{\params}u_{\params}}(\tau)\right\Vert^q\right]\right)^{\frac{1}{q}}+\left(\mathbb{E}\left[\left\Vert\xi_{x_{\params}u_{\params}}(\tau)-\ol{\xi}_{x_{\params}u_{\params}}(\tau)\right\Vert^q\right]\right)^{\frac{1}{q}}+\left(\mathbb{E}\left[\left\Vert\ol{\xi}_{x_{\params}u_{\params}}(\tau)-x'_\params\right\Vert^q\right]\right)^{\frac{1}{q}}\\\notag&\leq\left(\beta\left(\varepsilon^q,\tau\right) + \gamma(\mu)\right)^{\frac{1}{q}}+\left(h(\sigma,\tau)\right)^{\frac{1}{q}}+\eta\leq\varepsilon.
\end{align*}
%\end{small}
Therefore, we conclude that \mbox{$\left(x'_{\tau},x'_{\params}\right)\in{R}$} and that condition (iii) in Definition \ref{APSR} holds. 

Now we prove \mbox{$S_{\params}(\Sigma)\preceq^{\varepsilon}_{\mathcal{S}}S_{\tau}(\Sigma)$} implying that $R^{-1}$ is a suitable
$\varepsilon$-approximate simulation relation from $S_{\params}(\Sigma)$ to $S_{\tau}(\Sigma)$. 
Consider the relation \mbox{$R\subseteq X_{\tau}\times X_{\params}$}, defined in the first part of the proof. For every \mbox{$x_{\params0}\in{X}_{\params0}$}, by choosing \mbox{$x_{\tau0}=x_{\params0}$}, we have $\left\Vert x_{\tau0}-x_{\params0}\right\Vert^q=0$ and \mbox{$\left(x_{\tau0},x_{\params0}\right)\in{R}$} and condition (i) in Definition \ref{APSR} is satisfied. Now consider any \mbox{$\left(x_{\tau},x_{\params}\right)\in R$}. Condition (ii) in Definition \ref{APSR} is satisfied by the definition of $R$. Let us now show that condition (iii) in Definition \ref{APSR} holds. 
Consider any \mbox{$u_{\params}\in U_{\params}$}. Choose the input \mbox{$\upsilon_{\tau}=u_\params$} and consider \mbox{$x'_{\tau}=\xi_{x_{\tau}\upsilon_{\tau}}(\tau)$ in $S_{\tau}(\Sigma)$}.
Since $\Sigma$ is \mbox{$\delta$-ISS-M$_q$}, one obtains:
\begin{equation}
\mathbb{E}\left[\left\Vert x'_{\tau}-\xi_{x_{\params}u_{\params}}(\tau)\right\Vert^q\right]\leq \beta\left(\EE[\left\Vert x_{\tau}-x_\params\right\Vert^q],\tau\right)\leq\beta\left(\varepsilon^q,\tau\right).\label{b30}%
\end{equation}

Using Lemmas \ref{lemma3} or \ref{lem:moment est}, the definition of $S_\params(\Sigma)$, (\ref{bisim_cond2}), (\ref{b30}), and the triangle inequality, we obtain
%\begin{small}
\begin{align*}
\left(\mathbb{E}\left[\left\Vert x'_{\tau}-x'_{\params}\right\Vert^q\right]\right)^{\frac{1}{q}}&=\left(\mathbb{E}\left[\left\Vert x'_\tau-\xi_{x_{\params}u_{\params}}(\tau)+\xi_{x_{\params}u_{\params}}(\tau)-\ol{\xi}_{x_{\params}u_{\params}}(\tau)+\ol{\xi}_{x_{\params}u_{\params}}(\tau)-x'_\params\right\Vert^q\right]\right)^{\frac{1}{q}}\\ \notag
&\leq  \left(\mathbb{E}\left[\left\Vert x'_{\tau}-\xi_{x_{\params}u_{\params}}(\tau)\right\Vert^q\right]\right)^{\frac{1}{q}}+\left(\mathbb{E}\left[\left\Vert\xi_{x_{\params}u_{\params}}(\tau)-\ol{\xi}_{x_{\params}u_{\params}}(\tau)\right\Vert^q\right]\right)^{\frac{1}{q}}+\left(\mathbb{E}\left[\left\Vert\ol{\xi}_{x_{\params}u_{\params}}(\tau)-x'_\params\right\Vert^q\right]\right)^{\frac{1}{q}}\\\notag&\leq\left(\beta\left(\varepsilon^q,\tau\right)\right)^{\frac{1}{q}}+\left(h(\sigma,\tau)\right)^{\frac{1}{q}}+\eta\leq\varepsilon.
\end{align*}
%\end{small}
Therefore, we conclude that \mbox{$(x'_{\tau},x'_{\params})\in{R}$} and condition (iii) in Definition \ref{APSR} holds. 
\end{proof}

Note that the results in \cite{pola}, as in the following corollary, are fully recovered by the results in Theorem \ref{main_theorem2} if the stochastic control system $\Sigma$ is not affected by any noise, implying that $h(\sigma,\tau)$ is identically zero and that the $\delta$-ISS-M$_q$ property becomes the $\delta$-ISS property.

\begin{corollary}\label{theorem10}
Let $\Sigma=(\mathbb{R}^{n},\mathsf{U},\mathcal{U}_\tau,f,0_{n\times{p}})$ be a $\delta$-ISS control system (i.e. satisfying \eqref{delta_ISS}). For any $\varepsilon\in\R^+$, and any triple $\mathsf{q}=(\tau,\eta,\mu)$ of quantization parameters satisfying $\mu\leq\text{span}(\mathsf{U})$ and
\begin{align}\label{bisim_cond200}
\beta(\varepsilon,\tau)+\gamma(\mu)+\eta\leq\varepsilon,
\end{align}
we have \mbox{$S_{\mathsf{q}}(\Sigma)\cong_{\mathcal{S}}^{\varepsilon}S_{\tau}(\Sigma)$}.
\end{corollary}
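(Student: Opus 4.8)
The plan is to obtain Corollary~\ref{theorem10} as an immediate specialization of Theorem~\ref{main_theorem2}. First I would observe that a $\delta$-ISS control system $\Sigma=(\R^n,\mathsf{U},\mathcal{U}_\tau,f,0_{n\times p})$ satisfying \eqref{delta_ISS} is a $\delta$-ISS-M$_q$ stochastic control system in the sense of Definition~\ref{dISS}: since the diffusion term vanishes and the initial states in $X_0$ are Dirac random variables, every solution process of $\Sigma$ is $\PP$-a.s.\ a deterministic curve, so the expectation operator in \eqref{delta_PISS} acts on a constant and \eqref{delta_PISS} collapses to \eqref{delta_ISS}. Taking $q=1$, the moment $\EE[\|\cdot\|^q]$ reduces to $\|\cdot\|$ and the fractional exponent $\tfrac1q$ appearing in \eqref{bisim_cond2} equals $1$.

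Next I would note that with $\sigma\equiv 0_{n\times p}$ the solution process $\xi_{x\upsilon}$ coincides with the ODE solution $\ol\xi_{x\upsilon}$ of $\dot{\ol\xi}_{x\upsilon}(t)=f(\ol\xi_{x\upsilon}(t),\upsilon(t))$; hence the trajectory-mismatch estimates of Lemmas~\ref{lemma3} and \ref{lem:moment est} hold trivially with $h(\sigma,\tau)\equiv 0$, so assumption \eqref{mismatch1} is met. Substituting $q=1$ and $h(\sigma,\tau)=0$ into \eqref{bisim_cond2} turns that condition into $\beta(\varepsilon,\tau)+\gamma(\mu)+\eta\le\varepsilon$, which is exactly \eqref{bisim_cond200}. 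Theorem~\ref{main_theorem2} then yields $S_{\params}(\Sigma)\cong_{\mathcal S}^{\varepsilon}S_\tau(\Sigma)$.

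For completeness I would also indicate the self-contained version, obtained by rerunning the proof of Theorem~\ref{main_theorem2} with every expectation deleted. One takes $R\subseteq X_\tau\times X_\params$ with $(x_\tau,x_\params)\in R$ iff $\|x_\tau-x_\params\|\le\varepsilon$; conditions (i) and (ii) of Definition~\ref{APSR} follow from the covering $\R^n\subseteq\bigcup_{p\in[\R^n]_\eta}\mathcal{B}_\eta(p)$ and $\eta\le\varepsilon$. For (iii), given $\upsilon_\tau\in U_\tau$ one picks $u_\params\in U_\params=[\mathsf{U}]_\mu$ with $\|\upsilon_\tau-u_\params\|_\infty\le\mu$ (possible because $\mu\le\boxspan(\mathsf{U})$), applies \eqref{delta_ISS} to get $\|\xi_{x_\tau\upsilon_\tau}(\tau)-\ol\xi_{x_\params u_\params}(\tau)\|\le\beta(\|x_\tau-x_\params\|,\tau)+\gamma(\mu)\le\beta(\varepsilon,\tau)+\gamma(\mu)$, then chooses $x'_\params\in X_\params$ with $\|\ol\xi_{x_\params u_\params}(\tau)-x'_\params\|\le\eta$ (which, by the definition of $S_\params(\Sigma)$, produces the transition $x_\params\rTo^{u_\params}_\params x'_\params$), and concludes by the triangle inequality that $\|x'_\tau-x'_\params\|\le\beta(\varepsilon,\tau)+\gamma(\mu)+\eta\le\varepsilon$. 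The reverse direction for $R^{-1}$ sets $\upsilon_\tau=u_\params$, so the input-mismatch term disappears and only $\beta(\varepsilon,\tau)+\eta\le\varepsilon$ is needed.

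I do not expect a genuine obstacle here: the only points requiring attention are the bookkeeping identifying deterministic states with Dirac random variables, the verification that the mismatch term $h(\sigma,\tau)$ vanishes identically when $\sigma\equiv0$, and the remark that choosing $q=1$ removes the fractional exponents so that \eqref{bisim_cond2} reduces verbatim to \eqref{bisim_cond200}. Recovering the noise-free construction of \cite{pola} then serves as a consistency check.
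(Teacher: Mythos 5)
Your proposal is correct and follows essentially the same route as the paper, which states this corollary precisely as the noise-free specialization of Theorem~\ref{main_theorem2}: with $\sigma\equiv 0_{n\times p}$ the mismatch bound gives $h(\sigma,\tau)\equiv 0$, the moment property collapses to \eqref{delta_ISS}, and \eqref{bisim_cond2} (with $q=1$, or with all moments acting on deterministic quantities) becomes \eqref{bisim_cond200}; your self-contained rerun of the proof with expectations removed is the same argument written out explicitly. The only point to phrase carefully is that \eqref{delta_PISS} for genuinely random ($\sigalg_0$-measurable, non-Dirac) initial conditions would need Jensen-type concavity of $\beta$ and $\gamma$, but this is immaterial here since all states reached by the noise-free sampled system from Dirac initial conditions are deterministic, so the relation can be taken over such states exactly as you do.
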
 

Symbolic models can be easily model checked or employed towards controller synthesis. 
It is of interest to understand how abstract controllers can be used over the concrete models. 
The next proposition elucidates how a controller $S_{cont}$ synthesized to solve a simulation game over $S_\params(\Sigma)$ can be refined to a controller for $S_\tau(\Sigma)$. 
A detailed description of the feedback composition (denoted by $\parallel$) and of its properties for metric systems can be found in \cite{paulo}. 

\begin{proposition}\cite[Proposition 11.10]{paulo}
Consider a stochastic control system $\Sigma$, and a specification described by a deterministic system $S_{spec}=\left(X_{spec},X_{spec0},U_{spec},\rTo_{spec},Y_{spec},H_{spec}\right)$, where $X_{spec}$ is a finite subset of $\R^n$, $X_{spec0}\subseteq X_{spec}$, $U_{spec}=\left\{u_{spec}\right\}$, $\rTo_{spec}\subseteq X_{spec}\times U_{spec}\times X_{spec}$, $Y_{spec}$ is the set of all $\R^n$-valued random variables defined on the probability space $(\Omega,\sigalg,\PP)$, and $H_{spec}=\imath:X_{spec}\hookrightarrow Y_{spec}$. Assume that $S_{\tau}(\Sigma)\cong^{\varepsilon}_{\mathcal{S}}S_{\params}(\Sigma)$ and $S_{cont}$ is synthesized to solve exactly a simulation game for $S_\params(\Sigma)$ and a specification $S_{spec}$: $S_{cont}\parallel S_{\params}(\Sigma)\preceq^{0}_{\mathcal{S}}S_{spec}$ (resp. $S_{cont}\parallel S_{\params}(\Sigma)\cong^{0}_{\mathcal{S}}S_{spec}$). Then, using $S'_{cont}=S_{cont}\parallel S_{\params}(\Sigma)$ as a controller for $S_\tau(\Sigma)$, we obtain: $S'_{cont}\parallel S_{\tau}(\Sigma)\preceq^{\varepsilon}_{\mathcal{S}}S_{spec}$ (resp. $S'_{cont}\parallel S_{\tau}(\Sigma)\cong^{\varepsilon}_{\mathcal{S}}S_{spec}$).
\end{proposition}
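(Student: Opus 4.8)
The plan is to reduce the claim to a composition of (approximate) simulation relations, chaining the $\varepsilon$-approximate bisimulation relation $R$ between $S_\tau(\Sigma)$ and $S_\params(\Sigma)$ furnished by Theorem~\ref{main_theorem} (or Theorem~\ref{main_theorem2}) with the exact simulation relation $R_0$ witnessing the hypothesis $S_{cont}\parallel S_\params(\Sigma)\preceq^0_{\mathcal{S}}S_{spec}$. First I would check that $S'_{cont}=S_{cont}\parallel S_\params(\Sigma)$ is a legitimate (compatible and non-blocking) controller for $S_\tau(\Sigma)$: its input alphabet is $U_\params=[\mathsf{U}]_\mu$, each symbol of which we identify, exactly as in the definitions of $S_\params(\Sigma)$ and $S_\tau(\Sigma)$, with the constant curve in $\mathcal{U}_\tau$ taking that value, and $S_\tau(\Sigma)$ is non-blocking under every such curve since solution processes of $\Sigma$ exist and are unique (Definition~\ref{Def_control_sys} and Subsection~\ref{II.B}). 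Hence the feedback composition $S'_{cont}\parallel S_\tau(\Sigma)$ is well defined; its states are triples $\big((x_{cont},x_\params),x_\tau\big)$, and each transition consists of a move $(x_{cont},x_\params)\rTo^{u_\params}(x'_{cont},x'_\params)$ permitted by $S_{cont}$ in the copy of $S_\params(\Sigma)$ embedded in $S'_{cont}$, together with the transition $x_\tau\rTo^{\upsilon_\tau}_\tau x'_\tau$ of $S_\tau(\Sigma)$ driven by the curve $\upsilon_\tau=u_\params$.

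Next I would introduce the candidate $\varepsilon$-approximate simulation relation $Q$ from $S'_{cont}\parallel S_\tau(\Sigma)$ to $S_{spec}$ by setting $\big((x_{cont},x_\params),x_\tau\big)\,Q\,x_{spec}$ iff $(x_\tau,x_\params)\in R$ and $(x_{cont},x_\params)\,R_0\,x_{spec}$, and verify the three conditions of Definition~\ref{APSR}. Condition~(i) follows by pairing the initial-state guarantee of $R$ (every initial state of $S_\tau(\Sigma)$ is $R$-related to an initial grid point of $S_\params(\Sigma)$) with that of $R_0$, given the way the feedback composition initializes the embedded model. For condition~(ii) I would use exactness of $R_0$: since $R_0$ is a $0$-approximate simulation, the output $\imath(x_\params)$ of $S'_{cont}$ at $(x_{cont},x_\params)$ equals $H_{spec}(x_{spec})$, so $\mathbf{d}\big(H_\tau(x_\tau),H_{spec}(x_{spec})\big)=\big(\EE[\Vert x_\tau-x_\params\Vert^q]\big)^{1/q}\le\varepsilon$ by condition~(ii) for $R$. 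For condition~(iii), given a closed-loop transition as above, the matching $S_{spec}$-transition $x_{spec}\rTo^{u_{spec}}_{spec}x'_{spec}$ with $(x'_{cont},x'_\params)\,R_0\,x'_{spec}$ exists because $R_0$ is a simulation (and, $U_{spec}$ being a singleton, $S_{spec}$ merely takes its unique move); at the same time, from $(x_\tau,x_\params)\in R$ and the transition $x_\params\rTo^{u_\params}_\params x'_\params$, the matching transition in $S_\tau(\Sigma)$ is exactly $\upsilon_\tau=u_\params$, $x'_\tau=\xi_{x_\tau\upsilon_\tau}(\tau)$, with $(x'_\tau,x'_\params)\in R$ --- this is precisely the transition-matching step performed in the proofs of Theorems~\ref{main_theorem} and~\ref{main_theorem2} (the direction establishing $S_\params(\Sigma)\preceq^\varepsilon_{\mathcal{S}}S_\tau(\Sigma)$), which moreover used only the property $\Vert\ol{\xi}_{x_\params u_\params}(\tau)-x'_\params\Vert\le\eta$ of the successor $x'_\params$ and therefore applies to whichever successor the controller selects. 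Hence $\big((x'_{cont},x'_\params),x'_\tau\big)\,Q\,x'_{spec}$, so $Q$ is an $\varepsilon$-approximate simulation relation and $S'_{cont}\parallel S_\tau(\Sigma)\preceq^\varepsilon_{\mathcal{S}}S_{spec}$.

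For the parenthetical bisimulation claim I would run the symmetric argument: from $S_{cont}\parallel S_\params(\Sigma)\cong^0_{\mathcal{S}}S_{spec}$ and $S_\params(\Sigma)\preceq^\varepsilon_{\mathcal{S}}S_\tau(\Sigma)$ (again read off from $S_\tau(\Sigma)\cong^\varepsilon_{\mathcal{S}}S_\params(\Sigma)$), the relation $Q^{-1}$ is shown to be an $\varepsilon$-approximate simulation from $S_{spec}$ to $S'_{cont}\parallel S_\tau(\Sigma)$ by the same three checks, now pulling transitions back through $R_0^{-1}$ and through the other direction of $R$. I expect the delicate part to be not any estimate --- all of those are inherited verbatim from Theorems~\ref{main_theorem} and~\ref{main_theorem2} --- but the bookkeeping of the interconnection: verifying that the copy of $S_\params(\Sigma)$ embedded in $S'_{cont}$ is the very abstraction for which $R$ was built, that the identification $[\mathsf{U}]_\mu\hookrightarrow\mathcal{U}_\tau$ is used consistently on the plant and on the controller's internal model, and that the successor states appearing in the definition of $Q$ are well posed (determinism of $S_\tau(\Sigma)$ together with the successor convention of $S_\params(\Sigma)$). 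Once this is in place each condition of Definition~\ref{APSR} collapses to something already established, which is exactly why the result can be imported wholesale from \cite{paulo}.
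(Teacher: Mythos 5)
Your argument is correct and follows essentially the same route as the source of this result: the paper itself gives no proof of the proposition (it is imported from \cite[Proposition 11.10]{paulo}), and the proof behind that citation is precisely the composition you construct, chaining the $\varepsilon$-approximate bisimulation relation furnished by Theorem \ref{main_theorem} or \ref{main_theorem2} with the exact ($0$-approximate) simulation relation witnessing $S_{cont}\parallel S_{\params}(\Sigma)\preceq^{0}_{\mathcal{S}}S_{spec}$, i.e.\ transitivity of approximate simulations with additive precisions $\varepsilon+0=\varepsilon$ applied through the feedback composition. Your explicit relation $Q$, the use of exactness of $R_0$ in condition (ii) of Definition \ref{APSR}, and the observation that the transition-matching step in the proofs of Theorems \ref{main_theorem} and \ref{main_theorem2} only uses the $\eta$-closeness of whichever successor the controller selects, are exactly the ingredients needed, so nothing essential is missing.
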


\begin{remark}\label{remark5}
Although we assume that the set $\mathsf{U}$ is infinite, 
Theorems \ref{main_theorem} (resp. Corollary \ref{theorem1}) and \ref{main_theorem2} (resp. Corollary \ref{theorem10}) still hold when the set $\mathsf{U}$ is finite, 
with the following modifications. 
First, the system $\Sigma$ is required to satisfy the property (\ref{delta_PISS}) for $\upsilon=\upsilon'$. 
Second, take $U_\params=\mathsf{U}$ 
in the definition of $S_\params(\Sigma)$.
Finally, in the conditions (\ref{bisim_cond}) (resp. \eqref{bisim_cond0}) and (\ref{bisim_cond2}) (resp. \eqref{bisim_cond200}) set $\mu=0$. \hfill$\Box$
\end{remark}

\begin{remark}
If stochastic control system $\Sigma$ is not $\delta$-ISS-M$_q$, one can use the results in \cite{majid7}, providing symbolic models that are only sufficient in the sense that the refinement of any controller synthesized for the symbolic model enforces the same desired specifications on the original stochastic control system (in the sense of moments). 
However, they can no longer guarantee, as it was the case in this paper, that the existence of a controller for the original stochastic control system leads to the existence of a controller for the symbolic model.  \hfill$\Box$
\end{remark}

\subsection{Relationship with notions of probabilistic abstractions in the literature}

In the remainder of this section we relate the (approximate) equivalence relations introduced in this work with known probabilistic concepts in the literature. 

%\begin{remark}[Relationship with probabilistic notions in the literature] 
The approximate bisimulation notion in Definition \ref{APSR} is structurally different than the probabilistic version discussed for finite state, 
\emph{discrete-time}  labeled Markov chains in \cite{DLT08}, which is also extended to continuous-space processes as in \cite{DP02} and hinges on the one-step difference between transition probabilities over sets covering the state space.    
The notion in this work can be instead related to the approximate probabilistic bisimulation notion discussed in \cite{julius1},  
which lower bounds the probability that the Euclidean distance between any trace of abstract model and the corresponding one of the concrete model and vice versa remains close:    
both notions hinge on distances over trajectories, rather than over transition probabilities as in \cite{DP02,DLT08}.  
%\end{remark}

We make the above statement more precise with the following results, 
which do not require any more the assumptions that $f(0_n,0_m)=0_n$ and $\sigma(0_n)=0_{n\times{p}}$.  

\begin{theorem}\label{lemma5}
Let $\Sigma=(\mathbb{R}^{n},\mathsf{U},\mathcal{U}_\tau,f,\sigma)$ be a stochastic control system. Assume that there exists a stochastic bisimulation function $\phi:\R^n\times\R^n\ra\R_0^+$, as defined in \cite{julius1}, between $\Sigma$ and its corresponding non-probabilistic control system $\ol\Sigma=(\mathbb{R}^{n},\mathsf{U},\mathcal{U}_\tau,f,0_{n\times{p}})$, and assume that $\ol\Sigma$ is $\delta$-ISS. For any $\varepsilon\in\R^+$, any triple $\params=(\tau,\eta,\mu)$ of quantization parameters, satisfying $\mu\leq\boxspan(\mathsf{U})$ and \eqref{bisim_cond200} (or \eqref{bisim_cond10} and \eqref{bisim_cond0}), and for any state run $\xi_{x_{\tau0}\upsilon}$ of $S_\tau(\Sigma)$, there exists a state run $\mathsf{X}_{x_{\params0}u}$ of $S_\params(\Sigma)$, and vice versa, such that the following relation holds:
\begin{equation}\label{simultaneous}
\PP\left\{\sup_{k\in{\N_0}}\left\Vert\xi_{x_{\tau0}\upsilon}(k\tau)-\mathsf{X}_{x_{\params0}u}(k)\right\Vert>\epsilon\,\,|\,\,x_{\tau0}\right\}\leq\frac{\sqrt{\phi(x_{\tau0},x_{\tau0})}+\varepsilon}{\epsilon}.
\end{equation}
\end{theorem}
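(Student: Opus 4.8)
The strategy is to interpose the noise-free trajectory of $\ol\Sigma$ between the stochastic trajectory of $\Sigma$ and the symbolic run of $S_\params(\Sigma)$: the first gap will be controlled probabilistically through the stochastic bisimulation function $\phi$, the second gap deterministically through the $\delta$-ISS--based approximate bisimulation of $\ol\Sigma$ with its finite abstraction, and the conclusion will follow from a conditional Markov inequality.

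First I would observe that the symbolic model $S_\params(\Sigma)$ in \eqref{T2} is built exclusively from the drift $f$ (through the ODE $\dot{\ol{\xi}}_{x_\params u_\params}=f(\ol{\xi}_{x_\params u_\params},u_\params)$) together with $\mathsf{U},\tau,\eta,\mu$, and does not involve the diffusion $\sigma$; hence $S_\params(\Sigma)=S_\params(\ol\Sigma)$. Since $\ol\Sigma$ is $\delta$-ISS and the quantization triple satisfies $\mu\le\boxspan(\mathsf{U})$ together with \eqref{bisim_cond200} (resp.\ \eqref{bisim_cond10} and \eqref{bisim_cond0}), Corollary~\ref{theorem10} (resp.\ Corollary~\ref{theorem1}) yields $S_\tau(\ol\Sigma)\cong^{\varepsilon}_{\mathcal{S}}S_\params(\ol\Sigma)$, the output metric there collapsing to $\|\cdot\|$ because the solution processes of $\ol\Sigma$ are deterministic. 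For the forward claim I would then take, given a state run $\xi_{x_{\tau0}\upsilon}$ of $S_\tau(\Sigma)$, the run $\ol{\xi}_{x_{\tau0}\upsilon}$ of $S_\tau(\ol\Sigma)$ with the same deterministic initial condition $x_{\tau0}$ and the same input curve $\upsilon$; the $\varepsilon$-approximate simulation from $S_\tau(\ol\Sigma)$ to $S_\params(\ol\Sigma)$ supplies a state run $\mathsf{X}_{x_{\params0}u}$ of $S_\params(\ol\Sigma)=S_\params(\Sigma)$ with $\|\ol{\xi}_{x_{\tau0}\upsilon}(k\tau)-\mathsf{X}_{x_{\params0}u}(k)\|\le\varepsilon$ for all $k\in\N_0$. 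The ``vice versa'' part is symmetric: from a run $\mathsf{X}_{x_{\params0}u}$ of $S_\params(\Sigma)=S_\params(\ol\Sigma)$, using that $R^{-1}$ is an $\varepsilon$-approximate simulation from $S_\params(\ol\Sigma)$ to $S_\tau(\ol\Sigma)$ produces a matching run $\ol{\xi}_{x_{\tau0}\upsilon}$ of $S_\tau(\ol\Sigma)$, and $\xi_{x_{\tau0}\upsilon}$ is then the run of $S_\tau(\Sigma)$ with the same $x_{\tau0}$ and $\upsilon$.

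Next I would invoke the defining property of the stochastic bisimulation function $\phi$ of \cite{julius1} for the pair $(\Sigma,\ol\Sigma)$ driven by the common input $\upsilon$ and started from the common deterministic point $x_{\tau0}$, in the form of the path-wise estimate $\EE\!\left[\sup_{t\ge0}\|\xi_{x_{\tau0}\upsilon}(t)-\ol{\xi}_{x_{\tau0}\upsilon}(t)\|\mid x_{\tau0}\right]\le\sqrt{\phi(x_{\tau0},x_{\tau0})}$, obtained from $\phi$ dominating the squared state distance, the supermartingale behaviour of $\phi$ along paired solutions encoded in its generator condition, a maximal inequality, and Jensen's inequality. Since $\{k\tau:k\in\N_0\}\subseteq[0,\infty[$, the same bound holds with the supremum restricted to sampling instants. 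Combining it with the triangle inequality $\sup_{k\in\N_0}\|\xi_{x_{\tau0}\upsilon}(k\tau)-\mathsf{X}_{x_{\params0}u}(k)\|\le\sup_{k\in\N_0}\|\xi_{x_{\tau0}\upsilon}(k\tau)-\ol{\xi}_{x_{\tau0}\upsilon}(k\tau)\|+\sup_{k\in\N_0}\|\ol{\xi}_{x_{\tau0}\upsilon}(k\tau)-\mathsf{X}_{x_{\params0}u}(k)\|$ and with the deterministic bound $\varepsilon$ on the last supremum, I obtain $\EE\!\left[\sup_{k\in\N_0}\|\xi_{x_{\tau0}\upsilon}(k\tau)-\mathsf{X}_{x_{\params0}u}(k)\|\mid x_{\tau0}\right]\le\sqrt{\phi(x_{\tau0},x_{\tau0})}+\varepsilon$. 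Applying the conditional Markov inequality to the nonnegative random variable $\sup_{k\in\N_0}\|\xi_{x_{\tau0}\upsilon}(k\tau)-\mathsf{X}_{x_{\params0}u}(k)\|$ then delivers \eqref{simultaneous}.

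The step I expect to be the main obstacle is the penultimate one: establishing the clean estimate $\EE[\sup_{t\ge0}\|\xi_{x_{\tau0}\upsilon}(t)-\ol{\xi}_{x_{\tau0}\upsilon}(t)\|\mid x_{\tau0}]\le\sqrt{\phi(x_{\tau0},x_{\tau0})}$ from the definition of a stochastic bisimulation function in \cite{julius1}. This entails checking that $\phi$ applies to the non-autonomous pair $(\Sigma,\ol\Sigma)$ under the fixed input $\upsilon$, passing from the continuous-time supremum on which the estimate of \cite{julius1} is phrased to the sampled supremum (a harmless inclusion of events), and converting the form in which that estimate appears there --- a Chebyshev-type tail bound, or the $L^2$ estimate $\EE[\sup_{t\ge0}\|\xi_{x_{\tau0}\upsilon}(t)-\ol{\xi}_{x_{\tau0}\upsilon}(t)\|^2\mid x_{\tau0}]\le\phi(x_{\tau0},x_{\tau0})$ --- into the required $L^1$ form by Jensen's inequality. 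The remaining ingredients are routine: the bisimulation with the noise-free abstraction is a direct appeal to Corollaries~\ref{theorem1} and \ref{theorem10}, and the final step is Markov's inequality.
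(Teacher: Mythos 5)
Your overall architecture (interpose the noise-free trajectory, dispatch the symbolic gap through Corollaries \ref{theorem1}/\ref{theorem10} and the observation $S_\params(\Sigma)=S_\params(\ol\Sigma)$, dispatch the stochastic gap through $\phi$, finish with a tail bound) is the same as the paper's, but the step you yourself flag as the main obstacle is a genuine gap, and it is exactly where the paper argues differently. You want the estimate $\EE\left[\sup_{t\geq0}\left\Vert\xi_{x_{\tau0}\upsilon}(t)-\ol\xi(t)\right\Vert\,|\,x_{\tau0}\right]\leq\sqrt{\phi(x_{\tau0},x_{\tau0})}$ and then a conditional Markov inequality. The definition of a stochastic bisimulation function only makes $\phi$ evaluated along the paired solutions a nonnegative supermartingale, and for nonnegative supermartingales the only maximal inequality available is the weak-type tail bound $\PP\left\{\sup_t M_t>\lambda\right\}\leq M_0/\lambda$; it gives neither $\EE\left[\sup_t M_t\right]\leq M_0$ nor the $L^2$ bound on the running supremum that you propose to feed into Jensen's inequality (Doob's $L^p$ maximal inequalities require submartingales, not supermartingales). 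In fact $\EE\left[\sup_t M_t\right]$ can be infinite for a nonnegative martingale with $M_0=1$ (e.g.\ $M_t=\exp(W_t-t/2)$), so no argument of this shape can deliver your expected-supremum estimate. The paper never passes through an expected supremum: it observes that $\sqrt{\cdot}$ is increasing and concave, so $\sqrt{\phi(\xi,\ol\xi)}$, and hence $\sqrt{\phi(\xi,\ol\xi)}+\varepsilon$, is again a nonnegative supermartingale, and it applies the maximal inequality directly to the event $\left\{\sup_{k}\sqrt{\phi}+\varepsilon>\epsilon\right\}$; this is precisely how the bound $\bigl(\sqrt{\phi(x_{\tau0},x_{\tau0})}+\varepsilon\bigr)/\epsilon$ arises. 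The repair is to replace your $L^1$-estimate-plus-Markov step by this tail estimate on $\sup_k\sqrt{\phi}$.

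A second, related inaccuracy: you drive $\ol\Sigma$ with the same input $\upsilon$ as $\Sigma$. The definition in \cite{julius1} is game-like: for each $\upsilon\in\mathcal{U}_\tau$ it only guarantees the existence of some, possibly different, $\ol\upsilon\in\mathcal{U}_\tau$ such that $\phi\left(\xi_{x_{\tau0}\upsilon},\ol\xi_{x_{\tau0}\ol\upsilon}\right)$ is a supermartingale; under a common input the supermartingale property need not hold. The paper therefore matches the symbolic run to $\ol\xi_{x_{\tau0}\ol\upsilon}$ (harmless, since the theorem does not require the symbolic run to use an input close to $\upsilon$), while the same-input pairing appears only in Theorem \ref{lemma6}, where a different mechanism (the uniform bias $\alpha$ and the result from \cite{kushner}) compensates, and only over a finite horizon. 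The remaining ingredients of your outline --- the identification $S_\params(\Sigma)=S_\params(\ol\Sigma)$, the appeal to the noise-free approximate bisimulation, the triangle inequality, and the restriction to the sampling instants $k\tau$ --- do agree with the paper's proof.
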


\begin{proof}
Following the definition of stochastic bisimulation function in \cite{julius1}, for any input curve $\upsilon\in\mathcal{U}_\tau$, there exists an input curve $\ol\upsilon\in\mathcal{U}_\tau$ such that $\phi\left(\xi_{x_{\tau0}\upsilon},\ol\xi_{x_{\tau0}\ol\upsilon}\right)$ is a nonnegative supermartingale \cite[Appendix C]{oksendal}. 
%\footnote{See Appendix C in \cite{oksendal} for the definition of a supermartingale.}. 
Furthermore, using the chosen triple $\params=(\tau,\eta,\mu)$ of quantization parameters, we have: $S_\tau\left(\ol\Sigma\right)\cong_{\mathcal{S}}^{\varepsilon}S_{\mathsf{q}}(\Sigma)$, implying that for a state run $\ol\xi_{x_{\tau0}\ol\upsilon}$ of $S_\tau\left(\ol\Sigma\right)$,  there exists a state run $\mathsf{X}_{x_{\params0}u}$ of $S_\params(\Sigma)$ such that: 
$$\sup_{k\in{\N_0}}\left\Vert\ol\xi_{x_{\tau0}\ol\upsilon}(k\tau)-\mathsf{X}_{x_{\params0}u}(k)\right\Vert\leq\varepsilon.$$ 
The above statement follows by direct application of Definition \ref{APSR}. 
Since an increasing concave function of a supermartingale is a supermartingale \cite[Theorem 5.11]{yeh1995martingales}, we have the following chain of (in)equalities:
%\begin{small}
\begin{align}\nonumber
&\PP\left\{\sup_{k\in{\N_0}}\left\Vert\xi_{x_{\tau0}\upsilon}(k\tau)-\mathsf{X}_{x_{\params0}u}(k)\right\Vert>\epsilon\,\,|\,\,x_{\tau0}\right\}=\\\notag
&\PP\left\{\sup_{k\in{\N_0}}\left\Vert\xi_{x_{\tau0}\upsilon}(k\tau)-\ol\xi_{x_{\tau0}\ol\upsilon}(k\tau)+\ol\xi_{x_{\tau0}\ol\upsilon}(k\tau)-\mathsf{X}_{x_{\params0}u}(k)\right\Vert>\epsilon\,\,|\,\,x_{\tau0}\right\}\leq\\\notag
&\PP\left\{\sup_{k\in{\N_0}}\left\{\left\Vert\xi_{x_{\tau0}\upsilon}(k\tau)-\ol\xi_{x_{\tau0}\ol\upsilon}(k\tau)\right\Vert+\left\Vert\ol\xi_{x_{\tau0}\ol\upsilon}(k\tau)-\mathsf{X}_{x_{\params0}u}(k)\right\Vert\right\}>\epsilon\,\,|\,\,x_{\tau0}\right\}\leq\\\notag
&\PP\left\{\sup_{k\in{\N_0}}\left\{\left(\left\Vert\xi_{x_{\tau0}\upsilon}(k\tau)-\ol\xi_{x_{\tau0}\ol\upsilon}(k\tau)\right\Vert^2\right)^{\frac{1}{2}}\right\}+\varepsilon>\epsilon\,\,|\,\,x_{\tau0}\right\}\leq\\\notag
&\PP\left\{\sup_{k\in{\N_0}}\left\{\left(\phi\left(\xi_{x_{\tau0}\upsilon}(k\tau),\ol\xi_{x_{\tau0}\ol\upsilon}(k\tau)\right)\right)^{\frac{1}{2}}\right\}+\varepsilon>\epsilon\,\,|\,\,x_{\tau0}\right\}\leq\frac{\sqrt{\phi\left(x_{\tau0},x_{\tau0}\right)}+\varepsilon}{\epsilon}.
\end{align} 
%\end{small}
In a similar way, we can prove that for any state run $\mathsf{X}_{x_{\params0}u}$ of $S_\params(\Sigma)$, there exists a state run $\xi_{x_{\tau0}\upsilon}$ of $S_\tau(\Sigma)$ such that the relation in (\ref{simultaneous}) holds.
\end{proof}

Theorem 3 in \cite{julius1} provides a similar result as in Theorem \ref{lemma5}, 
however it is limited to hold over two infinite systems rather than a finite and an infinite one as in this work. 
As explained in \cite{julius1}, in order to compute a stochastic bisimulation function, one requires to solve a game, 
which is computationally difficult even for linear stochastic control systems. 
On the other hand, if the set of input is a singleton (i.e., $\mathsf{U}=\{0_m\}$), 
which results in the verification of stochastic dynamical systems, then a stochastic bisimulation function $\phi$ can be efficiently computed by solving some LMIs, 
as explained in \cite[equations (39) and (40)]{julius1} for linear stochastic control systems. 
Alternatively, the stochastic contractivity of the model can be used for this goal \cite{abate}. 

\begin{remark} 
Let us further comment on the relationship between $\delta$-ISS-M$_q$ Lyapunov functions in this work and stochastic bisimulation ones in \cite{julius1}. 
Let $\Sigma=(\mathbb{R}^{n},\{0_m\},\mathcal{U}_\tau,f,\sigma)$ be a stochastic control system admitting a $\delta$-ISS-M$_q$ Lyapunov function $V$, and such that $f(0_n,0_m)=0_n$, $\sigma(0_n)=0_{n\times{p}}$, and $0_{n\times{n}}\preceq\partial_{\ol{x},\ol{x}}V(0_n,\ol{x})$ for all $\ol{x}\in\R^n$. It can be readily verified that the function $\phi(x,\ol{x})=V(x,0_n)+V(0_n,\ol{x})$, for all $x,\ol{x}\in\R^n$, is a stochastic bisimulation function 
%(perhaps not the best one) 
between $\Sigma$ and the corresponding non-probabilistic control system $\ol\Sigma$, 
under the following modification in condition (i) in \cite[Definition 2]{julius1}: (i) $\phi(x,\ol{x})\geq\widetilde\alpha\left(\Vert x-\ol{x}\Vert\right)$, for some $\widetilde\alpha\in\mathcal{K}_\infty$ and all $x,\ol{x}\in\R^n$. Using this new condition on $\phi$, one can readily revise the relation \eqref{simultaneous} correspondingly. \hfill$\Box$
\end{remark}

The next theorem, which is computationally more tractable for stochastic control systems and does not require the existence of stochastic bisimulation functions as in \cite{julius1}, provides a similar result as in Theorem \ref{lemma5}, 
but applies only over a finite time horizon. 

\begin{theorem}\label{lemma6}
Let $\Sigma=(\mathbb{R}^{n},\mathsf{U},\mathcal{U}_\tau,f,\sigma)$ be a stochastic control system. Suppose there exists a $\delta$-ISS-M$_q$ Lyapunov function $V$ for $\Sigma$ with a form as in (\ref{V}) or as in Lemma \ref{lemma3}, and that $\Tr(\sigma(x)^T\partial_{x,x}V(x,y)\sigma(x))$ is uniformly upper-bounded by a constant $\alpha\ge0$, for all $x,y\in \R^n$. For any $\varepsilon\in\R^+$, any triple $\params=(\tau,\eta,\mu)$ of quantization parameters, satisfying $\mu\leq\boxspan(\mathsf{U})$ and \eqref{bisim_cond200} (or \eqref{bisim_cond10}, and \eqref{bisim_cond0}), and for any state run $\xi_{x_{\tau0}\upsilon}$ of $S_\tau(\Sigma)$, there exists a state run $\mathsf{X}_{x_{\params0}u}$ of $S_\params(\Sigma)$, and vice versa, such that the following relations hold:
\begin{align}\label{simultaneous1}
\PP\left\{\sup_{0\leq k\leq N}\left\Vert\xi_{x_{\tau0}\upsilon}(k\tau)-\mathsf{X}_{x_{\params0}u}(k)\right\Vert>\epsilon+\varepsilon\,\,|\,\,x_{\tau0}\right\}&\leq1-\mathsf{e}^{-\frac{\alpha N\tau}{2\ul\alpha\left(\epsilon^q\right)}},~\text{if}~\ul\alpha\left(\epsilon^q\right)\geq\frac{\alpha}{2\kappa},\\\label{simultaneous2}
\PP\left\{\sup_{0\leq k\leq N}\left\Vert\xi_{x_{\tau0}\upsilon}(k\tau)-\mathsf{X}_{x_{\params0}u}(k)\right\Vert>\epsilon+\varepsilon\,\,|\,\,x_{\tau0}\right\}&\leq\frac{\left(\mathsf{e}^{N\tau\kappa}-1\right)\alpha}{2\kappa\ul\alpha\left(\epsilon^q\right)\mathsf{e}^{N\tau\kappa}}, ~\text{if}~\ul\alpha\left(\epsilon^q\right)\leq\frac{\alpha}{2\kappa}.
\end{align}
\end{theorem}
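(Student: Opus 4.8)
The plan is to mimic the structure of Theorem \ref{lemma5}, replacing the supermartingale argument (which relied on the existence of a stochastic bisimulation function) with a direct estimate on the $q$th-moment mismatch between the noisy trajectory $\xi_{x_{\tau0}\upsilon}$ and the noise-free trajectory $\ol\xi_{x_{\tau0}\upsilon}$, followed by a maximal inequality. First I would observe that, under the chosen quantization triple $\params$, the $\delta$-ISS (or $\delta$-ISS-M$_q$-Lyapunov) hypotheses on $\ol\Sigma$ give $S_\tau(\ol\Sigma)\cong^{\varepsilon}_{\mathcal S}S_\params(\Sigma)$ via Theorem \ref{main_theorem} or \ref{main_theorem2}; hence for the noise-free run $\ol\xi_{x_{\tau0}\upsilon}$ there is a symbolic run $\mathsf{X}_{x_{\params0}u}$ with $\sup_{0\le k\le N}\|\ol\xi_{x_{\tau0}\upsilon}(k\tau)-\mathsf{X}_{x_{\params0}u}(k)\|\le\varepsilon$ deterministically. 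So by the triangle inequality the event on the left of \eqref{simultaneous1}--\eqref{simultaneous2} is contained in $\{\sup_{0\le k\le N}\|\xi_{x_{\tau0}\upsilon}(k\tau)-\ol\xi_{x_{\tau0}\upsilon}(k\tau)\|>\epsilon\}$, and it suffices to bound the probability of this latter event by the stated right-hand sides.

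The core of the argument is a supermartingale-type estimate for the process $t\mapsto V(\xi_{x_{\tau0}\upsilon}(t),\ol\xi_{x_{\tau0}\upsilon}(t))$. Applying It\^o's formula and using condition (iii) of Definition \ref{delta_PISS_Lya} with $u=u'=\upsilon(t)$ (so $\rho(\|u-u'\|)=0$), the drift of this process is at most $-\kappa V + \tfrac12\Tr(\sigma(\xi)^T\partial_{x,x}V\,\sigma(\xi))\le -\kappa V + \tfrac{\alpha}{2}$, since $\ol\xi$ has no diffusion term and the cross second-derivative and $\partial_{x',x'}V$ terms contract against the zero diffusion of $\ol\xi$. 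Thus $M_t := \mathsf{e}^{\kappa t}V(\xi(t),\ol\xi(t)) - \tfrac{\alpha}{2\kappa}(\mathsf{e}^{\kappa t}-1)$ is a supermartingale (modulo the usual localization/integrability check), with $M_0 = V(x_{\tau0},x_{\tau0})=0$. I would then apply the maximal inequality for nonnegative supermartingales (Doob / Ville): $\PP\{\sup_{t\le T} Y_t \ge \lambda\}\le \EE[Y_0]/\lambda$ for a nonnegative supermartingale $Y$. The quantity I actually want to control is $\sup_k V(\xi(k\tau),\ol\xi(k\tau))$; to get a bound in terms of $\ul\alpha(\epsilon^q)$ I use condition (ii), namely $V(x,x')\ge\ul\alpha(\|x-x'\|^q)$, so that $\{\sup_k\|\xi(k\tau)-\ol\xi(k\tau)\|>\epsilon\}\subseteq\{\sup_k V(\xi(k\tau),\ol\xi(k\tau))>\ul\alpha(\epsilon^q)\}$.

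The case split reflects a standard subtlety in applying maximal inequalities to a supermartingale with a deterministic additive drift correction. In the regime $\ul\alpha(\epsilon^q)\ge\alpha/(2\kappa)$ one wants a bound that decays with $N\tau$; here I would use the supermartingale $\mathsf{e}^{\kappa t}V$ together with an exponential/Markov-type estimate — more precisely, track the nonnegative supermartingale built from $V$ directly (without the exponential factor, using the bound $\mathcal L V\le -\kappa V + \alpha/2\le \alpha/2$ on the sub-level region and a stopping-time argument at the first exceedance of $\ul\alpha(\epsilon^q)$), giving a failure probability of the form $1-\mathsf{e}^{-\alpha N\tau/(2\ul\alpha(\epsilon^q))}$ via a comparison with the probability that a process with bounded drift escapes a fixed level. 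In the complementary regime $\ul\alpha(\epsilon^q)\le\alpha/(2\kappa)$ the stationary level $\alpha/(2\kappa)$ already exceeds the threshold, so one cannot hope for decay; instead I apply Ville's inequality to $Y_t=\mathsf{e}^{\kappa t}V(\xi(t),\ol\xi(t))$ over $t\in[0,N\tau]$: since $\EE[Y_0]=0$ this is degenerate, so I would rather bound $\EE[\mathsf{e}^{\kappa t}V]\le \tfrac{\alpha}{2\kappa}(\mathsf{e}^{\kappa t}-1)$ from the drift estimate, apply the maximal inequality to the submartingale $\mathsf{e}^{\kappa t}V$ (or to $V$ with a shifted compensator), and at $t=N\tau$ obtain $\PP\{\sup_{k\le N}V>\ul\alpha(\epsilon^q)\}\le \tfrac{(\mathsf{e}^{N\tau\kappa}-1)\alpha}{2\kappa\,\ul\alpha(\epsilon^q)\,\mathsf{e}^{N\tau\kappa}}$, matching \eqref{simultaneous2}.

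The main obstacle I anticipate is getting the two probability bounds in precisely the stated forms: the compensated supermartingale $\mathsf{e}^{\kappa t}V - \tfrac{\alpha}{2\kappa}(\mathsf{e}^{\kappa t}-1)$ starts at $0$, so a naive Ville inequality gives only the trivial bound, and one must be careful to apply the maximal inequality to the \emph{right} auxiliary process in each regime (and to verify the It\^o/localization hypotheses — integrability of $V$ along trajectories confined to a compact set, and that the local-martingale part is a genuine martingale after localizing at the first exit of a large ball and passing to the limit). The ``vice versa'' direction is symmetric: starting from a symbolic run, bisimilarity $S_\tau(\ol\Sigma)\cong^\varepsilon_{\mathcal S}S_\params(\Sigma)$ yields a noise-free run of $\ol\Sigma$ $\varepsilon$-close to it, and the same It\^o estimate relates the noisy run of $\Sigma$ starting from the same initial point to that noise-free run, so \eqref{simultaneous1}--\eqref{simultaneous2} follow verbatim.
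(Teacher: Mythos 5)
Your proposal follows essentially the same route as the paper: reduce via the triangle inequality and the deterministic $\varepsilon$-bisimilarity of $S_\tau(\ol\Sigma)$ and $S_\params(\Sigma)$ to bounding $\PP\{\sup_{k\le N}V(\xi_{x_{\tau0}\upsilon}(k\tau),\ol\xi_{x_{\tau0}\upsilon}(k\tau))>\ul\alpha(\epsilon^q)\}$, establish the drift estimate $\mathcal{L}^{\upsilon,\upsilon}V\le-\kappa V+\alpha/2$ exactly as in the proof of Lemma \ref{lemma3}, and then extract the two regime-dependent probability bounds. The only divergence is at that last step, where the paper simply invokes Kushner's stochastic stability theorem (Theorem 1, Chapter III of \cite{kushner}), which delivers \eqref{simultaneous1} and \eqref{simultaneous2} verbatim for processes satisfying such a drift inequality with $V(x_0,x_0)=0$; the maximal-inequality machinery you sketch (and the subtleties you flag) is precisely what that cited theorem packages up.
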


\begin{proof}
As argued in the proof of Theorem \ref{lemma5}, for any state run $\ol\xi_{x_{\tau0}\upsilon}$ of $S_\tau\left(\ol\Sigma\right)$ (notice that here the non-probabilistic model is excited with the same input $\upsilon$ of $\Sigma$), there exists a state run $\mathsf{X}_{x_{\params0}u}$ of $S_\params(\Sigma)$ such that:$$\sup_{k\in{\N_0}}\left\Vert\ol\xi_{x_{\tau0}\upsilon}(k\tau)-\mathsf{X}_{x_{\params0}u}(k)\right\Vert\leq\varepsilon.$$ Hence, one obtains the following chain of (in)equalities:
\begin{align}\nonumber
&\PP\left\{\sup_{0\leq k\leq N}\left\Vert\xi_{x_{\tau0}\upsilon}(k\tau)-\mathsf{X}_{x_{\params0}u}(k)\right\Vert>\epsilon+\varepsilon\,\,|\,\,x_{\tau0}\right\}=\\\notag
&\PP\left\{\sup_{0\leq k\leq N}\left\Vert\xi_{x_{\tau0}\upsilon}(k\tau)-\ol\xi_{x_{\tau0}\upsilon}(k\tau)+\ol\xi_{x_{\tau0}\upsilon}(k\tau)-\mathsf{X}_{x_{\params0}u}(k)\right\Vert>\epsilon+\varepsilon\,\,|\,\,x_{\tau0}\right\}\leq\\\notag
&\PP\left\{\sup_{0\leq k\leq N}\left\{\left\Vert\xi_{x_{\tau0}\upsilon}(k\tau)-\ol\xi_{x_{\tau0}\upsilon}(k\tau)\right\Vert+\left\Vert\ol\xi_{x_{\tau0}\upsilon}(k\tau)-\mathsf{X}_{x_{\params0}u}(k)\right\Vert\right\}>\epsilon+\varepsilon\,\,|\,\,x_{\tau0}\right\}\leq\\\notag
&\PP\left\{\sup_{0\leq k\leq N}\left\{\ul\alpha\left(\left\Vert\xi_{x_{\tau0}\upsilon}(k\tau)-\ol\xi_{x_{\tau0}\upsilon}(k\tau)\right\Vert^q\right)\right\}>\ul\alpha\left(\epsilon^q\right)\,\,|\,\,x_{\tau0}\right\}\leq\\\label{final_probability}
&\PP\left\{\sup_{0\leq k\leq N}\left\{V\left(\xi_{x_{\tau0}\upsilon}(k\tau),\ol\xi_{x_{\tau0}\upsilon}(k\tau)\right)\right\}>\ul\alpha\left(\epsilon^q\right)\,\,|\,\,x_{\tau0}\right\}.
\end{align} 
As showed for the $\delta$-ISS-M$_q$ Lyapunov function $V$ in the proof of Lemma \ref{lemma3}, we have:
\begin{align}\nonumber
&\mathcal{L}^{\upsilon(t),\upsilon(t)}V\left(\xi_{x_{\tau0}\upsilon}(t),\ol\xi_{x_{\tau0}\upsilon}(t)\right)\\\notag&\leq-\kappa V\left(\xi_{x_{\tau0}\upsilon}(t),\ol\xi_{x_{\tau0}\upsilon}(t)\right)+\frac{1}{2}\Tr\left(\sigma(\xi_{x_{\tau0}\upsilon}(t))^T\partial_{x,x}V(\xi_{x_{\tau0}\upsilon}(t),\ol\xi_{x_{\tau0}\upsilon}(t))\sigma(\xi_{x_{\tau0}\upsilon}(t))\right)\\\label{bias}
&\leq-\kappa V\left(\xi_{x_{\tau0}\upsilon}(t),\ol\xi_{x_{\tau0}\upsilon}(t)\right)+\frac{\alpha}{2},
\end{align}
for any $\upsilon\in\mathcal{U}_\tau$ and any $x_{\tau0}\in\R^n$.
Using inequalities \eqref{final_probability}, \eqref{bias}, and Theorem 1 in \cite[Chapter III]{kushner}, one obtains the relations (\ref{simultaneous1}) and (\ref{simultaneous2}).
%\begin{align}
%&\PP\left\{\sup_{0\leq k\leq N}\left\Vert\xi_{x_{\tau0}\upsilon}(k\tau)-\mathsf{X}_{x_{\params0}u}(k)\right\Vert>\epsilon+\varepsilon\,\,|\,\,x_{\tau0}\right\}\leq\\\notag
%&\PP\left\{\sup_{0\leq k\leq N}\left\{V\left(\xi_{x_{\tau0}\upsilon}(k\tau),\ol\xi_{x_{\tau0}\upsilon}(k\tau)\right)\right\}>\ul\alpha\left(\epsilon^q\right)\,\,|\,\,x_{\tau0}\right\}\leq\frac{\alpha N\tau}{2\ul\alpha\left(\epsilon^q\right)}.
%\end{align}
In a similar way, we can prove that for any state run $\mathsf{X}_{x_{q0}u}$ of $S_\params(\Sigma)$, there exists a state run $\xi_{x_{\tau0}\upsilon}$ of $S_\tau(\Sigma)$ such that the relations in (\ref{simultaneous1}) and (\ref{simultaneous2}) hold.
\end{proof}

As an alternative to the two results above, 
we now introduce a probabilistic approximate bisimulation relation between $S_\tau(\Sigma)$ and $S_\params(\Sigma)$ point-wise in time: 
this relation is sufficient to work with LTL specifications which satisfiability can be ascertained at single time instances,  
such as for next ($\bigcirc$) and eventually ($\diamondsuit$). 
%, require only point-wise probabilistic approximate bisimulation relation \cite[Lemma 5.8]{majid5}.      
%These relations imply that at each sampling time instance the probability of which the distance between observations is close in Euclidean metric is (and remains) lower bounded by some precision. 
%Extending the results in the sense of probability which is simultaneous in time is under investigations.

%For systems $S_\tau(\Sigma)$ and $S_\params(\Sigma)$, consider the following proposed notion of approximate (bi)simulation relation.

\begin{definition}\label{NAPSR}
Consider two systems $S_{\tau}(\Sigma)=(X_{\tau},X_{\tau0},U_{\tau},\rTo_{\tau},Y_\tau,H_{\tau})$ and\\ $S_{\params}(\Sigma)=(X_{\params},X_{\params0},U_{\params},\rTo_{\params},Y_\params,H_{\params})$, and precisions $\epsilon\in\mathbb{R}^{+}$ and $\widehat\varepsilon\in[0,1]$. A relation \mbox{$R\subseteq X_{\tau}\times X_{\params}$} is said to be an $(\epsilon,\widehat\varepsilon)$-approximate simulation relation from $S_{\tau}$ to $S_{\params}$, if the following three conditions are satisfied:

\begin{itemize}
\item[(i)] for every $x_{\tau0}\in{X_{\tau0}}$, there exists $x_{\params0}\in{X_{\params0}}$ with $(x_{\tau0},x_{\params0})\in{R}$;
\item[(ii)] for every $(x_{\tau},x_{\params})\in R$ we have $\PP\left\{\left\Vert H_{\tau}(x_{\tau})-H_{\params}(x_{\params})\right\Vert\geq\epsilon\right\}\leq\widehat\varepsilon$;
\item[(iii)] for every $(x_{\tau},x_{\params})\in R$ we have that \mbox{$x_{\tau}\rTo_{\tau}^{\upsilon_\tau}x'_{\tau}$ in $S_\tau$} implies the existence of \mbox{$x_{\params}\rTo_{\params}^{u_\params}x'_{\params}$} in $S_\params$ satisfying $(x'_{\tau},x'_{\params})\in R$.
\end{itemize}  

A relation $R\subseteq X_{\tau}\times X_{\params}$ is said to be an $(\epsilon,\widehat\varepsilon)$-approximate bisimulation relation between $S_{\tau}$ and $S_{\params}$
if $R$ is an $(\epsilon,\widehat\varepsilon)$-approximate simulation relation from $S_\tau$ to $S_\params$ and
$R^{-1}$ is an $(\epsilon,\widehat\varepsilon)$-approximate simulation relation from $S_\params$ to $S_\tau$.

System $S_{\tau}$ is $(\epsilon,\widehat\varepsilon)$-approximately simulated by $S_{\params}$, or $S_\params$ $(\epsilon,\widehat\varepsilon)$-approximately simulates $S_\tau$, 
denoted by \mbox{$S_{\tau}\preceq_{\mathcal{S}}^{(\epsilon,\widehat\varepsilon)}S_{\params}$}, if there exists
an $(\epsilon,\widehat\varepsilon)$-approximate simulation relation from $S_{\tau}$ to $S_{\params}$.
System $S_{\tau}$ is $(\epsilon,\widehat\varepsilon)$-approximate bisimilar to $S_{\params}$, denoted by \mbox{$S_{\tau}\cong_{\mathcal{S}}^{(\epsilon,\widehat\varepsilon)}S_{\params}$}, if there exists
an $(\epsilon,\widehat\varepsilon)$-approximate bisimulation relation $R$ between $S_{\tau}$ and $S_{\params}$.
\end{definition}

We show next the existence of symbolic models with respect to the notion of approximate bisimulation relation in Definition \ref{NAPSR}. 

\begin{theorem}\label{lemma2}
Let $\Sigma=(\mathbb{R}^{n},\mathsf{U},\mathcal{U}_\tau,f,\sigma)$ be a stochastic control system. For any $\epsilon\in\R^+$ and $\widehat\varepsilon\in[0,1]$, we have \mbox{$S_{\tau}(\Sigma)\cong_{\mathcal{S}}^{(\epsilon,\widehat\varepsilon)}S_{\params}(\Sigma)$} if \mbox{$S_{\tau}(\Sigma)\cong_{\mathcal{S}}^{\varepsilon}S_{\params}(\Sigma)$} for $\varepsilon=\epsilon\widehat\varepsilon$.
\end{theorem}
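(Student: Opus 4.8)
The plan is to verify that an $\varepsilon$-approximate bisimulation relation, with $\varepsilon=\epsilon\widehat\varepsilon$, is \emph{itself} an $(\epsilon,\widehat\varepsilon)$-approximate bisimulation relation between $S_\tau(\Sigma)$ and $S_\params(\Sigma)$ in the sense of Definition \ref{NAPSR}. Concretely, I would fix a relation $R\subseteq X_\tau\times X_\params$ witnessing $S_\tau(\Sigma)\cong^{\varepsilon}_{\mathcal{S}}S_\params(\Sigma)$ (so that $R$ is an $\varepsilon$-approximate simulation relation from $S_\tau(\Sigma)$ to $S_\params(\Sigma)$ and $R^{-1}$ is one from $S_\params(\Sigma)$ to $S_\tau(\Sigma)$, with respect to the moment metric $\mathbf{d}(y,y')=(\EE[\Vert y-y'\Vert^q])^{1/q}$), and then check the three conditions of Definition \ref{NAPSR} for $R$ (and, symmetrically, for $R^{-1}$).

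Conditions (i) and (iii) of Definition \ref{NAPSR} are literally conditions (i) and (iii) of Definition \ref{APSR} instantiated to $S_\tau(\Sigma)$ and $S_\params(\Sigma)$, so they transfer verbatim from the hypothesis. The entire content is in condition (ii): I must upgrade the moment bound $\mathbf{d}(H_\tau(x_\tau),H_\params(x_\params))=\bigl(\EE[\Vert H_\tau(x_\tau)-H_\params(x_\params)\Vert^q]\bigr)^{1/q}\le\varepsilon=\epsilon\widehat\varepsilon$, supplied by Definition \ref{APSR}(ii), to the tail bound required by Definition \ref{NAPSR}(ii). The key step is Markov's inequality applied to the nonnegative random variable $\Vert H_\tau(x_\tau)-H_\params(x_\params)\Vert^q$, together with $\epsilon\in\R^+$:
\[
\PP\left\{\Vert H_\tau(x_\tau)-H_\params(x_\params)\Vert\ge\epsilon\right\}=\PP\left\{\Vert H_\tau(x_\tau)-H_\params(x_\params)\Vert^q\ge\epsilon^q\right\}\le\frac{\EE\left[\Vert H_\tau(x_\tau)-H_\params(x_\params)\Vert^q\right]}{\epsilon^q}\le\frac{(\epsilon\widehat\varepsilon)^q}{\epsilon^q}=\widehat\varepsilon^q\le\widehat\varepsilon,
\]
where the last inequality uses $\widehat\varepsilon\in[0,1]$ and $q\ge1$. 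Since condition (ii) of Definition \ref{NAPSR} is symmetric in its two arguments, the identical computation handles $R^{-1}$.

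There is no serious obstacle here; the only point deserving a moment's care is the degenerate case $\widehat\varepsilon=0$ (equivalently $\varepsilon=0$), where the hypothesis forces $\EE[\Vert H_\tau(x_\tau)-H_\params(x_\params)\Vert^q]=0$, hence $H_\tau(x_\tau)=H_\params(x_\params)$ $\PP$-a.s., so the tail probability is $0=\widehat\varepsilon$ for every $\epsilon>0$ and the bound holds trivially (Markov is not even needed). Collecting these observations shows that $R$ is an $(\epsilon,\widehat\varepsilon)$-approximate bisimulation relation between $S_\tau(\Sigma)$ and $S_\params(\Sigma)$, which yields $S_\tau(\Sigma)\cong_{\mathcal{S}}^{(\epsilon,\widehat\varepsilon)}S_\params(\Sigma)$.
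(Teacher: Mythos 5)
Your proposal is correct and follows essentially the same route as the paper: keep the witness relation $R$ from the $\varepsilon$-approximate bisimulation (conditions (i) and (iii) transfer verbatim) and upgrade the moment bound in condition (ii) to a tail bound via Markov's inequality. The only cosmetic difference is that the paper applies Markov to the first moment and then bounds $\EE[\Vert\cdot\Vert]\le(\EE[\Vert\cdot\Vert^q])^{1/q}\le\varepsilon$ to get exactly $\widehat\varepsilon$, whereas you apply Markov to the $q$th power and obtain the (slightly stronger) bound $\widehat\varepsilon^{\,q}\le\widehat\varepsilon$; both arguments are valid.
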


\begin{proof}
The proof is a simple consequence of Theorems \ref{main_theorem} or \ref{main_theorem2} and Markov inequality \cite{oksendal}, used as the following:
\begin{align}\nonumber
\PP\left\{\left\Vert H_{\tau}(x_{\tau})-H_{\params}(x_{\params})\right\Vert\geq\epsilon\right\}&\leq\frac{\EE\left[\left\Vert H_{\tau}(x_{\tau})-H_{\params}(x_{\params})\right\Vert\right]}{\epsilon}
%\\\notag&
\leq\frac{\left(\EE\left[\left\Vert H_{\tau}(x_{\tau})-H_{\params}(x_{\params})\right\Vert^q\right]\right)^{\frac{1}{q}}}{\epsilon}\leq\frac{\varepsilon}{\epsilon}=\widehat\varepsilon.
\end{align}
\end{proof}

\section{Case Studies}
We now experimentally demonstrate the effectiveness of the discussed results. 
In the examples below, the computation of the abstract systems $S_\params(\Sigma)$ have been implemented by the software tool \textsf{Pessoa}~\cite{pessoa} on a laptop with CPU $2$GHz Intel Core i$7$. We have assumed that the control inputs are piecewise constant of duration $\tau$ and that $\mathcal{U}_\tau$ is finite and contains curves taking values in $[\mathsf{U}]_{\mu}$. Hence, as explained in Remark \ref{remark5}, $\mu=0$ is to be used in the conditions (\ref{bisim_cond}) and (\ref{bisim_cond2}). 
The controllers enforcing the specifications of interest have been found by standard algorithms from game theory \cite{MalerPnueliSifakis95,Thomas95}, 
as implemented in \textsf{Pessoa}. 
In both examples, the terms $W_t^i, i=1,2,$ denote the standard Brownian motion terms.

\subsection{Nonlinear model, $2{\textsf{nd}}$ moment, ``sequential target tracking'' property}
Consider the nonlinear model of a pendulum on a cart borrowed from \cite{pola4}, which is now affected by noise. The model is described by:
\begin{equation}\label{example}
    \Sigma:\left\{
                \begin{array}{ll}
                  \diff{\xi}_1=\xi_2\diff{t}+0.03\xi_1\diff{W^1_t},\\ 
                 \diff{\xi}_2=\left(-\frac{g}{l}\sin(\xi_1)-\frac{k}{m}\xi_2+\frac{1}{ml^2}\upsilon\right)\diff{t}+0.03\xi_2\diff{W^1_t},
                \end{array}
                \right.
\end{equation}
where $\xi_1$ and $\xi_2$ represent the angular position and the velocity of the point mass on the pendulum, 
$\upsilon$ is the torque applied to the cart, 
$g=9.8$ is acceleration due to gravity, 
$l=0.5$ is the length of the shaft, 
$m=0.6$ is the mass, 
and $k=2$ is the friction coefficient. 
All the constants and the variables are considered in SI units. 
We assume that $\mathsf{U}=[-1.5,~1.5]$ and that $\mathcal{U}_\tau$ contains curves taking values in $[\mathsf{U}]_{0.5}$. We work on the subset $D=[-1,~1]\times[-1,~1]$ of the state space of $\Sigma$. 
Using the Lyapunov function $V(x,x')=(x-x')^TP(x-x')$, for all $x,x'\in\R^2$, proposed in \cite{pola4}, where $P=\left[
                \begin{array}{cc}
                  1.5&0.3\\ 
                 0.3&1.5
                \end{array}
                \right],$ 
it can be readily verified that $V$ satisfies conditions (i) and (ii) in Definition \ref{delta_PISS_Lya} with $\ul\alpha({r})=1.2\,r$, and $\ol\alpha({r})=3.6\,r$, for $q=2$. Moreover, $V$ satisfies condition (iii) in Definition \ref{delta_PISS_Lya} with $\kappa=0.7691$, and $\rho({r})=8.76\,r$, for all $x,x'\in D$, and again $q=2$. 
Therefore, $\Sigma$ is $\delta$-ISS-M$_2$, equipped with the $\delta$-ISS-M$_2$ Lyapunov function $V$, as long as we are interested in its dynamics, initialized on $D$. 
Using the results in Theorem \ref{the_Lya}, one obtains that functions $\beta(r,s)=3\mathsf{e}^{-\kappa s}$ and $\gamma({r})=3.49\,r$ satisfy property (\ref{delta_PISS}) for $\Sigma$. 
                
For a given fixed sampling time $\tau=3$, the precision $\varepsilon$ is lower bounded by $0.0778$ and $0.4848$ using the results in Theorems \ref{main_theorem2} and \ref{main_theorem}, respectively. Hence, the results in Theorem \ref{main_theorem2} provide a symbolic model which is much less conservative than the one provided by Theorem \ref{main_theorem}. By selecting an $\varepsilon=0.085$, the parameter $\eta$ for $S_{\params}(\Sigma)$ based on the results in Theorem \ref{main_theorem2} is obtained as $0.0033$. The resulting cardinality of state and input sets for $S_{\params}(\Sigma)$ are $370881$ and $7$, respectively. 
The CPU time taken for computing the abstraction $S_{\params}(\Sigma)$ has amounted to 17278 seconds.

Now, consider the objective to design a controller forcing the trajectories of $\Sigma$, starting from the initial condition $x_0=(0,~0)$, to first sequentially visit (in the $2{\textsf{nd}}$ moment metric) two regions of interest $W_1=[0.48,~0.58]\times[-1,~1]$, and $W_2=[-0.58,~-0.48]\times[-1,~1]$; then, once the system has visited the regions, to return to the first region $W_1$ and remain there forever. The LTL formula\footnote{Note that the semantics of LTL are defined over the output behaviors of $S_{\params}(\Sigma)$.} encoding this goal is $\Diamond\Box W_1\wedge \Diamond \left(W_1\wedge\Diamond W_2\right)$. The CPU time taken for synthesizing the controller has amounted to $3.02$ seconds. Figure \ref{fig1} displays a few realizations of the closed-loop solution process $\xi_{x_0\upsilon}$ stemming from the initial condition $x_0=(0,~0)$, as well as the corresponding evolution of the input signal. In Figure \ref{fig3}, we show the square root of the average value (over 100 experiments) of the squared distance in time of the solution process $\xi_{x_0\upsilon}$ to the sets $W_1$ and $W_2$, namely $\left\Vert\xi_{x_0\upsilon}(t)\right\Vert^2_{W_1}$ and $\left\Vert\xi_{x_0\upsilon}(t)\right\Vert^2_{W_2}$, where the point-to-set distance 
is defined as $\Vert{x}\Vert_{W}=\inf_{w\in{W}}\Vert x-w\Vert$.   
Notice that the square root of this empirical (averaged) squared distances is significantly lower than the selected bound on the precision $\varepsilon=0.085$, 
as expected since the conditions based on Lyapunov functions can lead to conservative bounds. 
(We have discussed that the bounds can be improved by seeking optimized Lyapunov functions.) 

Notice that using the chosen quantization parameters and \eqref{bisim_cond200}, we obtain a precision $\varepsilon=0.005$ in \eqref{simultaneous1} or \eqref{simultaneous2}.
Using the result in Theorem \ref{lemma6}, one can conclude that, as long as we are interested in dynamics of $\Sigma$ on $D$, we have an $\alpha=0.0032$ in Theorem \ref{lemma6}, and that the refinement of a controller satisfying an LTL formula $\varphi$ for $S_\params(\Sigma)$ (e.g. $\varphi=\Diamond\Box W_1\wedge \Diamond \left(W_1\wedge\Diamond W_2\right)$) to the system $S_\tau(\Sigma)$ satisfies the ``inflated formula'' $\varphi^{0.31}$ (e.g. $\varphi^{0.31}=\Diamond\Box W_1^{0.31}\wedge \Diamond \left(W_1^{0.31}\wedge\Diamond W_2^{0.31}\right)$) with probability at least $70\%$ over a discrete time horizon spanning $\{0,3,\cdots,27\}$ seconds,  where $\varphi^{0.31}$ is $0.31$-inflation of $\varphi$ as defined in \cite{liu}. 

Furthermore, employing the result in Theorem \ref{lemma2}, one can conclude that the refinement of a controller satisfying $\Diamond{W}$ for $S_\params(\Sigma)$ to  system $S_\tau(\Sigma)$ satisfies $\Diamond{W^\epsilon}$ with probability $1-\varepsilon/{\epsilon}$, where $W^\epsilon=\left\{x\in\R^n\,\,|\,\,\left\Vert{x}\right\Vert_W\leq\epsilon\right\}$. For example, refinement of a controller, satisfying $\Diamond{W_1}$ for $S_\params(\Sigma)$, to the system $S_\tau(\Sigma)$ satisfies $\Diamond{W_1^{0.28}}$ with probability at least $70\%$.

\begin{figure}[h]
\begin{center}
\includegraphics[width=12cm]{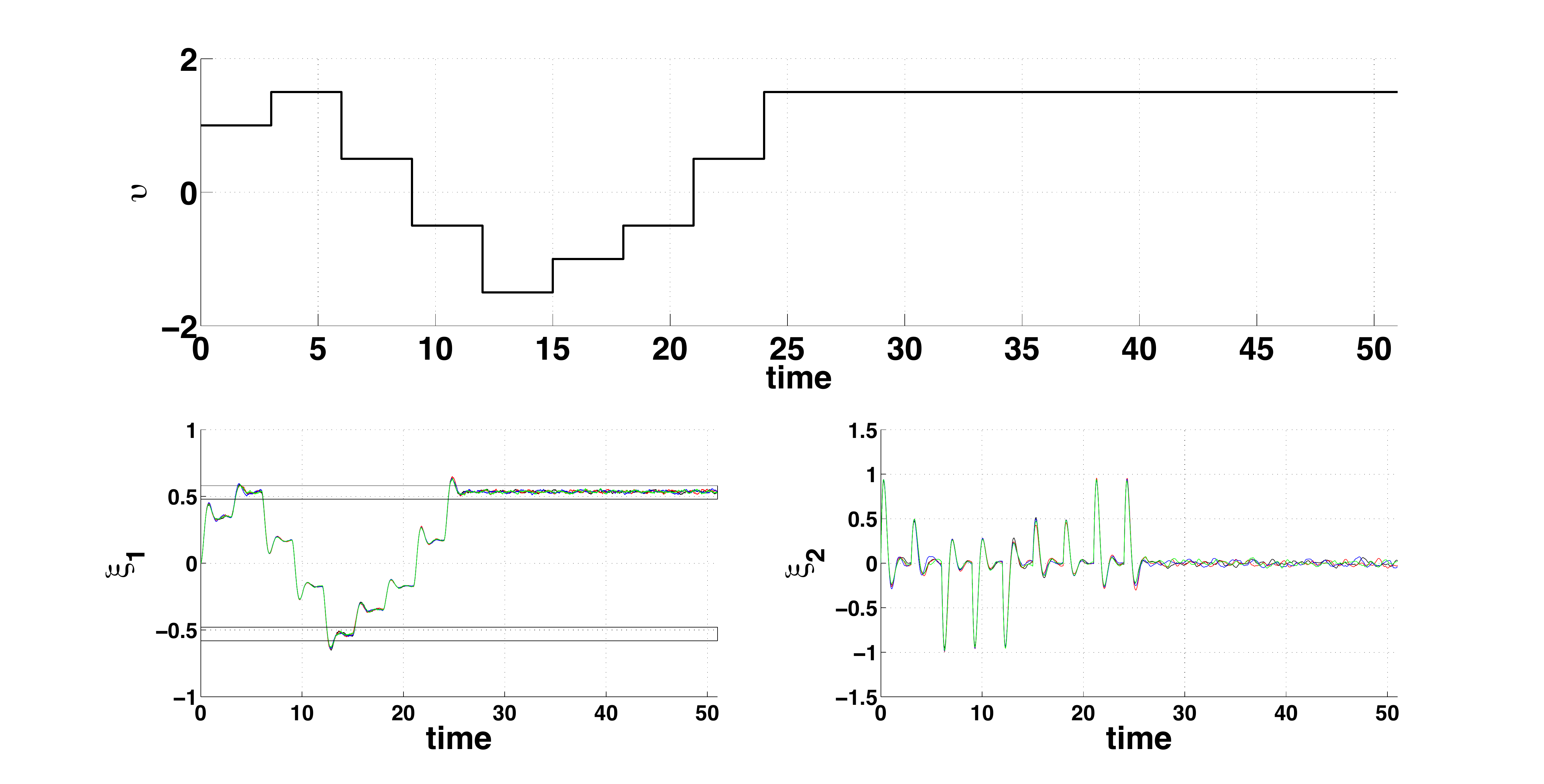}
\end{center}
\caption{Example 1: A few realizations of the closed-loop solution process $\xi_{x_0\upsilon}$ with initial condition \mbox{$x_0=(0,~0)$} (bottom panel) and the corresponding evolution of the obtained (unique) input signal $\upsilon$ (top panel).}
\label{fig1}
\end{figure}

\begin{figure}[h]
\begin{center}
\includegraphics[width=12cm]{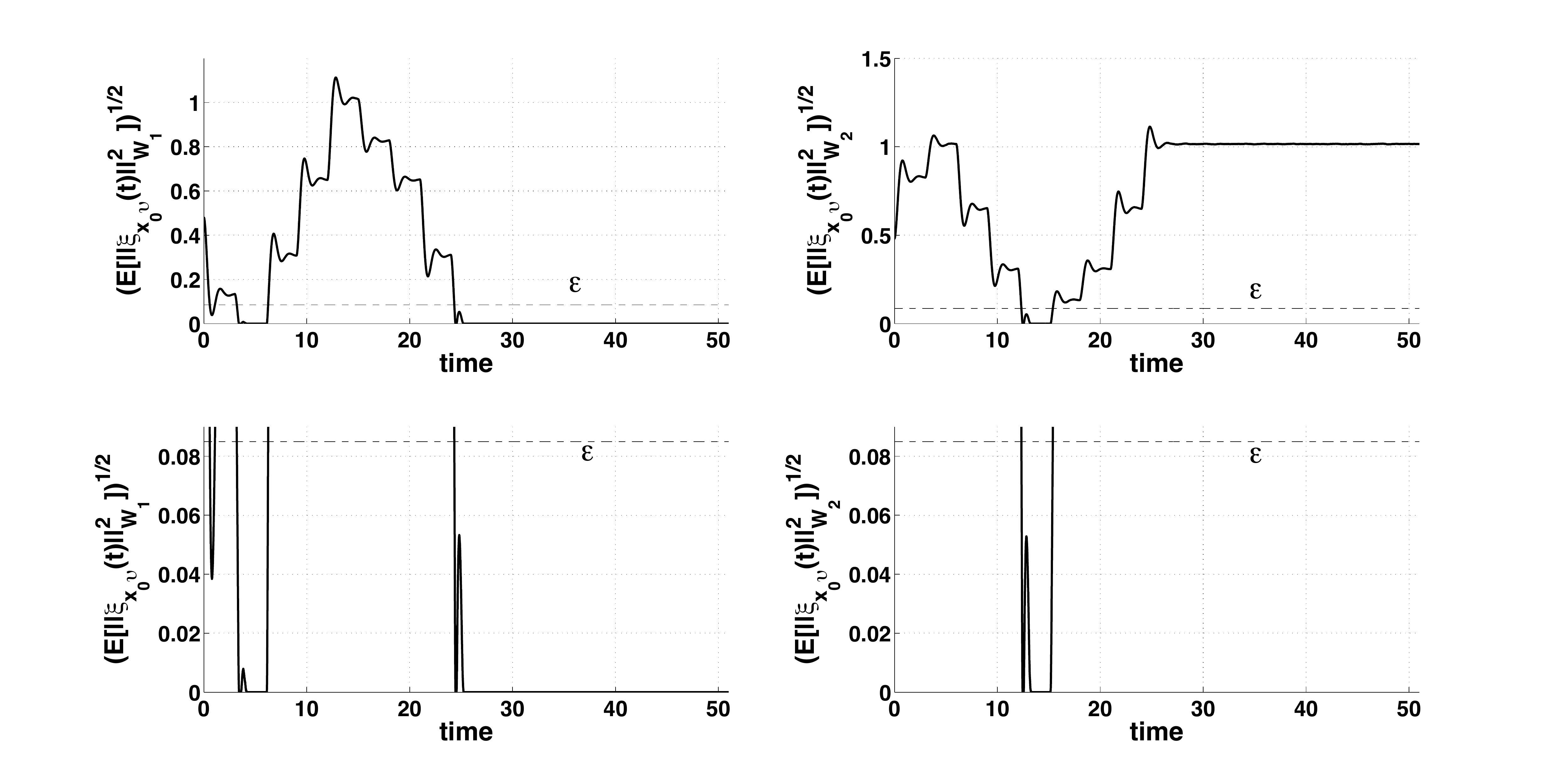}
\end{center}
\caption{Example 1: Square root of the average values (over 100 experiments) of the squared distance of the solution process $\xi_{x_0\upsilon}$ to the sets $W_1$ (left panels) and $W_2$ (right panels), in two different vertical scales (top vs bottom panels).}
\label{fig3}
\end{figure}

\subsection{Linear model, $1{\textsf{st}}$ moment, ``reach-and-stay, while staying'' property}
Consider a linear DC motor borrowed from \cite{CMU_Examples}, now affected by noise and described by:
\begin{equation}\label{DCmotor}
    \Sigma:\left\{
                \begin{array}{ll}
                  \diff{\xi}_1=\left(-\frac{b}{J}\xi_1+\frac{K}{J}\xi_2\right)\diff{t}+0.15\xi_1\diff{W^1_t},\\ 
                 \diff{\xi}_2=\left(-\frac{K}{L}\xi_1-\frac{R}{L}\xi_2+\frac{1}{L}\upsilon\right)\diff{t}+0.15\xi_2\diff{W^2_t}, 
                \end{array}
                \right.
\end{equation}
where $\xi_1$ is the angular velocity of the motor, 
$\xi_2$ is the current through an inductor, 
$\upsilon$ is the voltage signal, 
$b=10^{-4}$ is the damping ratio of the mechanical system, 
$J=25\times10^{-5}$ is the moment of inertia of the rotor, 
$K=5\times{10}^{-2}$ is the electromotive force constant, 
$L=3\times{10}^{-4}$ is the electric inductance, and 
$R=0.5$ is the electric resistance. 
All constants and variables refer to the SI units. 
It can be readily verified that the system $\Sigma$ satisfies condition (\ref{LMI}) with constant $\widehat\kappa=40$ and matrix 
$P=\left[
                \begin{array}{cc}
                  1.2201&0.2224\\ 
                 0.2224&1.2248
                \end{array}
                \right].$ 
Therefore, $\Sigma$ is $\delta$-ISS-M$_q$ and equipped with the $\delta$-ISS-M$_q$ Lyapunov function $V(x,x')$ in (\ref{V}), where $q\in\{1,2\}$. In this example, we use $q=1$.
                
We assume that $\mathsf{U}=[-0.5,~0.5]$ and that $\mathcal{U}_\tau$ contains curves taking values in $[\mathsf{U}]_{0.1}$. 
We work on the subset $D=[-5,~5]\times[-5,~5]$ of the state space of $\Sigma$.  
For a given precision $\varepsilon=1$ and fixed sampling time $\tau=0.01$, the parameter $\eta$ of $S_{\params}(\Sigma)$ based on the results in Theorem \ref{main_theorem} is equal to $0.01$. 
Note that for sampling times $\tau<0.026$, and in particular for a choice $\tau=0.01$, 
the results in Theorem \ref{main_theorem2} cannot be applied here because $\left(\beta\left(\varepsilon^q,\tau\right)\right)^{\frac{1}{q}}>\varepsilon$ and condition (\ref{bisim_cond2}) in Theorem \ref{main_theorem2} is not fulfilled. 
The resulting cardinality of the state and input sets for $S_{\params}(\Sigma)$ amounts to $1002001$ and $11$, respectively. 
The CPU time used for computing the abstraction has amounted to $148.092$ seconds. 

Now, consider the objective to design a controller forcing (in the $1{\textsf{st}}$ moment metric) the trajectories of $\Sigma$ to reach and stay within $W=[4.5,~5]\times[-0.25,~0.25]$, while ensuring that the current through the inductor is restricted between $-0.25$ and $0.25$. 
This corresponds to the LTL specification $\Diamond\Box W\wedge \Box Z$, where $Z=[-5,~5]\times[-0.25,~0.25]$. 
The CPU time used for synthesizing the controller has been of $3.88$ seconds. 
Figure \ref{fig11} displays a few realizations of the closed-loop solution process $\xi_{x_0\upsilon}$ stemming from the initial condition $x_0=(-5,~-0)$, as well as the corresponding evolution of the input signal. In Figure \ref{fig22}, we show the average value (over 100 experiments) of the distance in time of the solution process $\xi_{x_0\upsilon}$ to the sets $W$ and $Z$, namely $\left\Vert\xi_{x_0\upsilon}(t)\right\Vert_W$ and $\left\Vert\xi_{x_0\upsilon}(t)\right\Vert_{Z}$.   
Notice that the empirical average distances are as expected significantly lower than the precision $\varepsilon=1$.  

Note that using the selected quantization parameters in \eqref{bisim_cond10} and \eqref{bisim_cond0}, 
we obtain a value $\varepsilon=0.08$ in \eqref{simultaneous}, \eqref{simultaneous1} or \eqref{simultaneous2}.
Further, using the result in Theorem \ref{lemma5}, one obtains that, as long as we are interested in dynamics of $\Sigma$, initialized on $D$ and $\mathsf{U}=\{0_m\}$, there exists a state run $\mathsf{X}_{x_{\params0}u}$ of $S_\params(\Sigma)$ satisfying an LTL formula $\varphi$ if and only if there exists a state run $\xi_{x_{\tau0}\upsilon}$ of $S_\tau(\Sigma)$ satisfying $\varphi^{1}$ with probability at least $79\%$ over the infinite time horizon,  where $\varphi^{1}$ is the 
$1$-inflation of $\varphi$. 

Moreover, using the result in Theorem \ref{lemma6}, one can conclude that, as long as we are interested in dynamics of $\Sigma$ on $Z$, implying that $\alpha=0.6917$ in Theorem \ref{lemma6}, the refinement of a controller, satisfying an LTL formula $\varphi$ for $S_\params(\Sigma)$ (e.g. $\varphi=\Diamond\Box W\wedge \Box Z$) to the system $S_\tau(\Sigma)$ satisfies $\varphi^{1}$ (e.g. $\varphi^1=\Diamond\Box W^1\wedge \Box Z^1$) with probability at least $70\%$ over a discrete time horizon spanning $\{0,0.01,0.02,\cdots,1\}$ seconds.

\begin{figure}[h]
\begin{center}
\includegraphics[width=12cm]{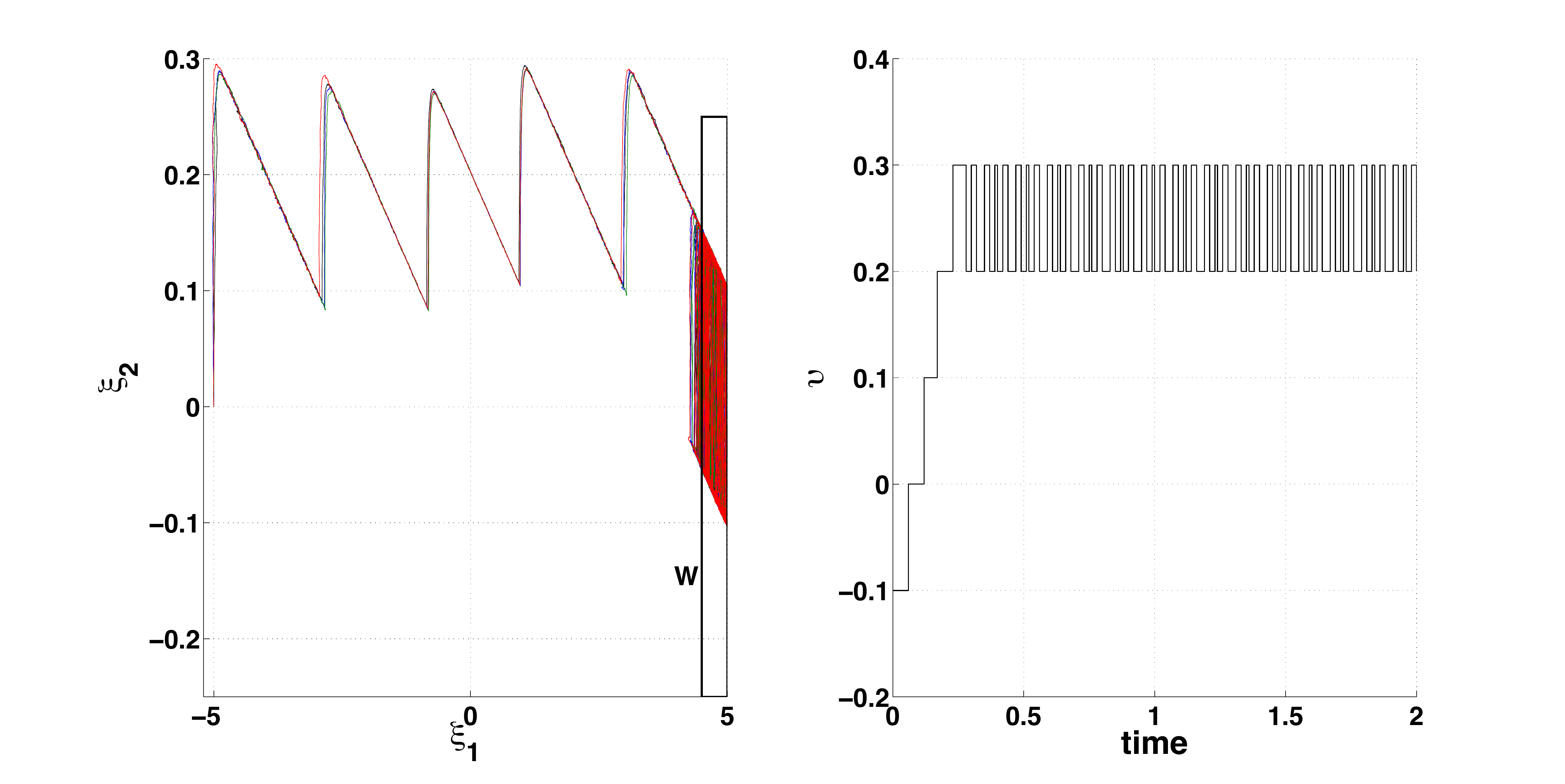}
\end{center}
\caption{Example 2: A few realizations of the closed-loop solution process $\xi_{x_0\upsilon}$ with initial condition \mbox{$x_0=(-5,~0)$} (left panel), 
and the evolution of the input signal $\upsilon$ (right panel).}
\label{fig11}
\end{figure}

\begin{figure}[h]
\begin{center}
\includegraphics[width=12cm]{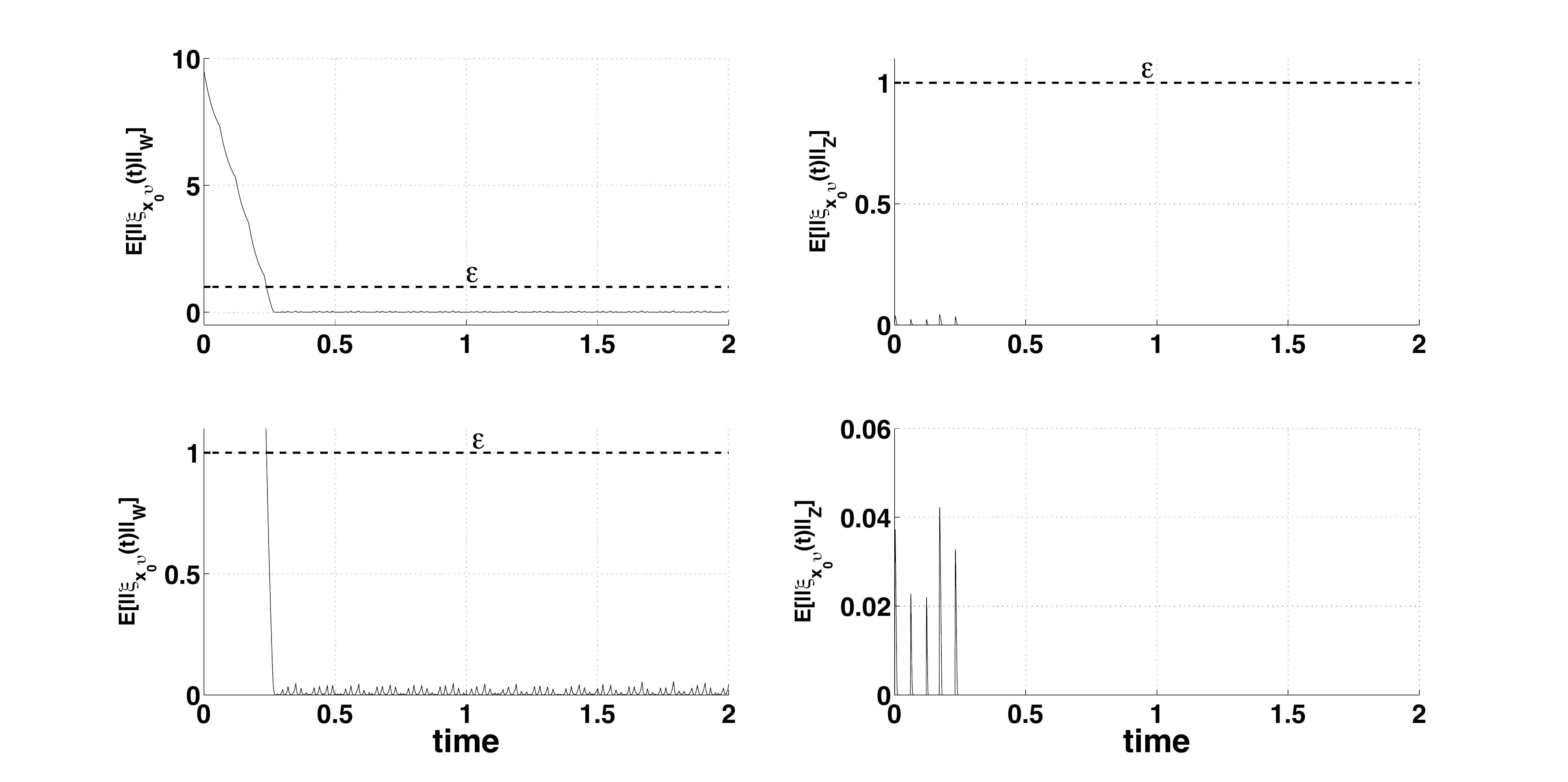}
\end{center}
\caption{Example 2: Average values (over 100 experiments) of the distance of the solution process $\xi_{x_0\upsilon}$ to the sets $W$ (left panels) and $Z$ (right panels), in two different vertical scales (top vs bottom panels).}
\label{fig22}
\end{figure}

\section{Conclusions}
This work has shown that any stochastic sampled-data control system, admitting a $\delta$-ISS-M$_q$ Lyapunov function of the form in (\ref{V}) or with a shape as in Lemma \ref{lemma3}, and initializing within a compact set of states, admits a finite approximately bisimilar symbolic model (in the sense of moments or probability). 
The constructed symbolic model can be used to synthesize controllers enforcing complex logic specifications, 
expressed via linear temporal logic or as automata on infinite strings. 

The main limitation of the design methodology developed in this paper lies in the cardinality of the set of states of the computed symbolic model. The authors are currently investigating several different techniques to address this limitation. Promising work over non-probabilistic control systems includes specification-guided abstractions \cite{matthias}, results on differentially flat systems \cite{alessandro}, and the use of non-uniform quantization \cite{tazaki}. Furthermore, the authors are currently working toward extensions of the results over general stochastic hybrid systems. 

\section{Acknowledgements}
The authors would like to thank Ilya Tkachev for fruitful technical discussions. 

\bibliographystyle{alpha}
\bibliography{reference}
% ---------------------------------------------------------------------------------------------------------
\newpage
\section{Appendix}

%Proof of Lemma \ref{lem:lyapunov}:
\begin{proof}[Proof of Lemma \ref{lem:lyapunov}]
It is not difficult to check that the function $V$ in \eqref{V} satisfies properties (i) and (ii) of Definition \ref{delta_PISS_Lya} with functions $\ul{\alpha}(y) \Let\left(\frac{1}{q}\lambda_{\min}\left({P}\right)\right)^{\frac{q}{2}}y$ and $\ol{\alpha}(y) \Let \left(\frac{n}{q}\lambda_{\max}\left({P}\right)\right)^{\frac{q}{2}}y$. It then suffices to verify property (iii). We verify property (iii) for the case that $f$ is differentiable and using condition (\ref{nonlinear ineq cond1}). The proof, using condition (\ref{nonlinear ineq cond}), follows similarly by removing the inequalities in the proof including derivative of $f$. By the definition of $V$ in (\ref{V}), for any $x,x'\in\R^n$ such that $x\neq{x'}$, and for $q\in\{1,2\}$, one has	
	%\begin{small}
	\begin{align*}
		\partial_xV&= -\partial_{x'} V=(x-x')^TP\left(\widetilde{V}(x,x')\right)^{\frac{q}{2}-1},\\
		\partial_{x,x}V&= \partial_{x',x'}V= -\partial_{x,x'}V %\\\notag&
		= P\left(\widetilde{V}(x,x')\right)^{\frac{q}{2}-1}+\frac{q-2}{q}P(x-x')(x-x')^TP\left(\widetilde{V}(x,x')\right)^{\frac{q}{2}-2}.
	\end{align*}	
	%\end{small}
	Therefore, following the definition of $\mathcal{L}^{u,u'}$, and for any \mbox{$x,x',z\in\R^n$} such that $x\neq{x'}$, and any $u,u'\in\mathsf{U}$, one obtains the chain of (in)equalities in \eqref{1}. In \eqref{1}, $z \in \R^n$ and the mean value theorem \cite{kolk} is applied to the differentiable function $x \mapsto f(x,u)$ at points $x,x'$ for a given input value $u \in \mathsf{U}$ and $L_u$ is the Lipschitz constant, as introduced in Definition \ref{Def_control_sys}. Therefore, the function $V$ in \eqref{V} satisfies property (iii) of Definition \ref{delta_PISS_Lya} with positive constant $\kappa=\frac{\widetilde\kappa}{q}$ and $\mathcal{K}_\infty$ function $\rho({r})=\frac{n^{\frac{q}{2}}L^q_u}{\widetilde\kappa^{q-1}}\left\Vert\sqrt{P}\right\Vert^q r^q$.
\begin{figure}[b]
--------------------------------------------------------------------------------------------------------------------------------------------
	\begin{small}
	\begin{align}\nonumber
		\mathcal{L}^{u,u'} V(x, x')&= (x-x')^TP\left(\widetilde{V}(x,x')\right)^{\frac{q}{2}-1}\left(f(x,u)-f(x',u')\right)+\frac{1}{2} \text{Tr} \left(\begin{bmatrix} \sigma(x) \\ \sigma(x') \end{bmatrix}\left[\sigma^T(x)~~\sigma^T(x')\right] \begin{bmatrix}
\partial_{x,x} V& -\partial_{x,x}V\\ -\partial_{x,x}V& \partial_{x,x}V
\end{bmatrix}\right)\\\notag
		& = (x-x')^TP\left(\widetilde{V}(x,x')\right)^{\frac{q}{2}-1}\left(f(x,u) - f(x',u')\right)+ \frac{1}{2}\text{Tr}\left( \left(\sigma(x)-\sigma(x')\right) \left ( \sigma^T(x) - \sigma^T(x') \right) \partial_{x,x} V \right)	\\\notag	
		& = (x-x')^TP\left(\widetilde{V}(x,x')\right)^{\frac{q}{2}-1}\left(f(x,u) - f(x',u')\right)+\frac{1}{2}\left \| \sqrt{P} \left( \sigma(x) - \sigma(x')\right) \right \|_F^2\left(\widetilde{V}(x,x')\right)^{\frac{q}{2}-1}\\\notag&\qquad +\frac{q-2}{q}\left \|(x-x')^TP\left( \sigma(x) - \sigma(x')\right) \right \|_F^2\left(\widetilde{V}(x,x')\right)^{\frac{q}{2}-2} \\\notag
		&\le(x-x')^TP\left(\widetilde{V}(x,x')\right)^{\frac{q}{2}-1} \left(f(x,u)-f(x',u)+f(x',u) - f(x',u')\right)\\\notag&\qquad+\frac{1}{2}\left \| \sqrt{P} \left( \sigma(x) - \sigma(x')\right) \right \|_F^2\left(\widetilde{V}(x,x')\right)^{\frac{q}{2}-1}\\\label{1}
		& \le (x-x')^TP\left(\widetilde{V}(x,x')\right)^{\frac{q}{2}-1} \partial_x f(z,u)(x-x')+  (x-x')^TP\left(\widetilde{V}(x,x')\right)^{\frac{q}{2}-1}\left(f(x',u) - f(x',u')\right)\\\notag
		& \qquad + \frac{1}{2}\left \| \sqrt{P} \left( \sigma(x) - \sigma(x')\right) \right \|_F^2\left(\widetilde{V}(x,x')\right)^{\frac{q}{2}-1} \nonumber \\\notag
		&\le \left((x-x')^TP \partial_x f(z,u)(x-x')+\frac{1}{2}\left \| \sqrt{P} \left( \sigma(x) - \sigma(x')\right) \right \|_F^2\right)\left(\widetilde{V}(x,x')\right)^{\frac{q}{2}-1}\\\notag
		&\qquad+\sqrt{n}\left((x-x')^TP(x-x')\right)^{\frac{1}{2}}\left\Vert\sqrt{P}\right\Vert L_u\left\Vert u-u'\right\Vert\left(\widetilde{V}(x,x')\right)^{\frac{q}{2}-1}\\\notag
		&\leq-\widetilde\kappa V(x,x')+(q-1)\frac{\widetilde\kappa}{2}V(x,x')+\frac{n^{\frac{q}{2}}L^q_u}{\widetilde\kappa^{q-1}}\left\Vert\sqrt{P}\right\Vert^q\left\Vert u-u'\right\Vert^q=-\frac{\widetilde\kappa}{q}V(x,x')+\frac{n^{\frac{q}{2}}L^q_u}{\widetilde\kappa^{q-1}}\left\Vert\sqrt{P}\right\Vert^q\left\Vert u-u'\right\Vert^q 
\end{align}
\end{small}
--------------------------------------------------------------------------------------------------------------------------------------------
%\vspace{-1.5cm}
\end{figure}
\end{proof}

\medskip

%Proof of Lemma \ref{lemma3}:
\begin{proof}[Proof of Lemma \ref{lemma3}]
	In the proof, we use the notation $\sigma\sigma^T(x)$ instead of $\sigma(x)\sigma^T(x)$ and $H(V)(x,x')$ for the Hessian matrix of $V$ at $(x,x')\in\R^{2n}$. We drop the arguments of $\partial_{x,x}{V}$, $\partial_{x}{V}$, $\partial_{x'}{V}$, and $H(V)$ for the sake of simplicity. 
	In view of Ito's formula, Jensen's inequality, and similar to calculations in Lemma \ref{lem:lyapunov}, we have 
	\begin{footnotesize}
	\begin{align}\nonumber
		\ul\alpha\left(\EE\left[\left\Vert\traj{\xi}{x}{\upsilon}(t)-\traj{\ol \xi}{x}{\upsilon}(t)\right\Vert^q\right]\right)&\le\EE\left[\ul\alpha\left(\left\Vert\traj{\xi}{x}{\upsilon}(t)-\traj{\ol \xi}{x}{\upsilon}(t)\right\Vert^q\right)\right]\le\EE\left[ V \left(\traj{\xi}{x}{\upsilon}(t), \traj{\ol \xi}{x}{\upsilon}(t) \right)\right] =\int_0^t \EE\left[\mathcal{L}^{\upsilon(s),\upsilon(s)}V \left(\traj{\xi}{x}{\upsilon}(s), \traj{\ol \xi}{x}{\upsilon}(s) \right)\right]ds\\\notag 
		&=\int_0^t\EE\left[\left[\partial_xV~~\partial_{x'}V\right] \begin{bmatrix} f\left(\traj{\xi}{x}{\upsilon}(s),\upsilon(s)\right)\\f\left(\traj{\ol \xi}{x}{\upsilon}(s),\upsilon(s)\right)\end{bmatrix}+\frac{1}{2}\Tr\left(\sigma\sigma^T\left(\traj{\xi}{x}{\upsilon}(s)\right)\partial_{x,x}V\right)\right]ds\\\notag
		&= \int_0^t \EE \Bigg [ \left[\partial_xV~~\partial_{x'}V\right] \begin{bmatrix} f\left(\traj{\xi}{x}{\upsilon}(s),\upsilon(s)\right)\\f\left(\traj{\ol \xi}{x}{\upsilon}(s),\upsilon(s)\right)\end{bmatrix} + \frac{1}{2} \text{Tr} \left(\begin{bmatrix} \sigma\left(\traj{\xi}{x}{\upsilon}(s)\right) \\ \sigma\left(\traj{\ol \xi}{x}{\upsilon}(s)\right) \end{bmatrix}\left[\sigma^T\left(\traj{\xi}{x}{\upsilon}(s)\right)~~\sigma^T\left(\traj{\ol \xi}{x}{\upsilon}(s)\right)\right]H(V)\right)\notag\\ 
		&\quad+ \frac{1}{2}\Tr\left( \sigma\sigma^T\left(\traj{\xi}{x}{\upsilon}(s)\right)\partial_{x,x}{V}\right) - \frac{1}{2} \text{Tr} \left(\begin{bmatrix} \sigma\left(\traj{\xi}{x}{\upsilon}(s)\right) \\ \sigma\left(\traj{\ol \xi}{x}{\upsilon}(s)\right) \end{bmatrix}\left[\sigma^T\left(\traj{\xi}{x}{\upsilon}(s)\right)~~\sigma^T\left(\traj{\ol \xi}{x}{\upsilon}(s)\right)\right]H(V)\right)\Bigg]ds \notag\\ 
		 \label{lem11} &\leq \int_0^t - \kappa \EE \left[ V \left(\traj{\xi}{x}{\upsilon}(s), \traj{\ol \xi}{x}{\upsilon}(s) \right)\right]ds \\\notag&\quad+ \int_0^t \EE \left[\frac{1}{2}\Tr\left( \sigma\sigma^T\left(\traj{\xi}{x}{\upsilon}(s)\right)\partial_{x,x}{V}\right) - \frac{1}{2} \text{Tr} \left(\begin{bmatrix} \sigma\left(\traj{\xi}{x}{\upsilon}(s)\right) \\ \sigma\left(\traj{\ol \xi}{x}{\upsilon}(s)\right) \end{bmatrix}\left[\sigma^T\left(\traj{\xi}{x}{\upsilon}(s)\right)~~\sigma^T\left(\traj{\ol \xi}{x}{\upsilon}(s)\right)\right]H(V)\right)\right]ds \\
		 \label{lem22} &\leq \int_0^t -\kappa \EE \left[ V (\traj{\xi}{x}{\upsilon}(s), \traj{\ol \xi}{x}{\upsilon}(s) )\right]ds+ \int_0^t \EE \left[ \frac{1}{2}\Tr\left( \sigma\sigma^T\left(\traj{\xi}{x}{\upsilon}(s)\right)\partial_{x,x}{V} \right)\right]ds \le \widehat{h}(\sigma,t)\mathsf{e}^{-\kappa t},  
	\end{align}
	\end{footnotesize}where the function $\widehat{h}$ can be computed as
	%\begin{align}
	%\label{h}
		$\widehat{h}(t,\sigma) \Let {\int_0^t \EE \left[ \frac{1}{2}\left\Vert \sqrt{\partial_{x,x}{V} }\sigma \left(\traj{\xi}{x}{\upsilon}(s)\right)\right\Vert_F^2 \right]ds}$.
	%\end{align}
	Inequality \eqref{lem11} is a straightforward consequence of $V$ satisfying the condition (iii) in Definition \ref{delta_PISS_Lya}, and \eqref{lem22} follows from Gronwall's inequality. 
	Using the Lipschitz continuity assumption on the diffusion term $\sigma$, we get:
	%\begin{small}
	\begin{align}\notag
	&\int_0^t \EE \left[ \frac{1}{2}\left\Vert \sqrt{\partial_{x,x}{V} }\sigma \left(\traj{\xi}{x}{\upsilon}(s)\right)\right\Vert_F^2 \right]ds\leq\frac{1}{2}\sup_{x,x'\in{D}}\left\{\left\Vert{\sqrt{\partial_{x,x}{V}(x,x')}}\right\Vert^2\right\}n\min\{n,p\}Z^2\int_0^t\EE\left[\left\Vert\xi_{x\upsilon}(s)\right\Vert^2\right]ds.
	\end{align}
	%\end{small}
	Since $V$ is a $\delta$-ISS-M$_q$ Lyapunov function, $q\geq2$, $f(0_n,0_m)=0_n$, $\sigma(0_n)=0_{n\times{p}}$, and using functions $\beta$ and $\gamma$ in (\ref{delta_PISS}), one can verify that:
	%\begin{small}
	\begin{align}\nonumber
	\left(\EE\left[\left\Vert\xi_{x\upsilon}(t)\right\Vert^2\right]\right)^{\frac{q}{2}}&\leq\EE\left[\left(\left\Vert\xi_{x\upsilon}(t)\right\Vert^2\right)^{\frac{q}{2}}\right]=\EE\left[\left\Vert\xi_{x\upsilon}(t)\right\Vert^q\right]\leq\beta\left(\left\Vert{x}\right\Vert^q,t\right)+\gamma\left(\Vert\upsilon\Vert_\infty\right)\\\notag&\leq\beta\left(\sup_{x\in{D}}\left\{\left\Vert{x}\right\Vert^q\right\},t\right)+\gamma\left(\sup_{u\in{\mathsf{U}}}\left\{\Vert{u}\Vert\right\}\right),
	\end{align}
	%\end{small}
	and hence
	%\begin{small}
	\begin{align}\nonumber
	\EE\left[\left\Vert\xi_{x\upsilon}(t)\right\Vert^2\right]\le\left(\beta\left(\sup_{x\in{D}}\left\{\left\Vert{x}\right\Vert^q\right\},t\right)+\gamma\left(\sup_{u\in{\mathsf{U}}}\left\{\Vert{u}\Vert\right\}\right)\right)^{\frac{2}{q}}.
	\end{align}
	%\end{small}
	Therefore, we have
	%\begin{small}
	\begin{align}\nonumber
	\widehat{h}(t,\sigma)&\leq\frac{1}{2}\sup_{x,x'\in{D}}\left\{\left\Vert{\sqrt{\partial_{x,x}{V}(x,x')}}\right\Vert^2\right\}n\min\{n,p\}Z^2\int_0^t\left(\beta\left(\sup_{x\in{D}}\left\{\left\Vert{x}\right\Vert^q\right\},s\right)+\gamma\left(\sup_{u\in{\mathsf{U}}}\left\{\Vert{u}\Vert\right\}\right)\right)^{\frac{2}{q}}ds.
	\end{align}
	%\end{small}
	By defining: 
	\begin{small}
	\begin{align}\label{up_bound1}
	h(\sigma,t)=\ul\alpha^{-1}\Bigg(\frac{1}{2}\sup_{x,x'\in{D}}\left\{\left\Vert{\sqrt{\partial_{x,x}{V}(x,x')}}\right\Vert^2\right\}n\min\{n,p\}Z^2\mathsf{e}^{-\kappa t}\int_0^t\left(\beta\left(\sup_{x\in{D}}\left\{\left\Vert{x}\right\Vert^q\right\},s\right)+\gamma\left(\sup_{u\in{\mathsf{U}}}\left\{\Vert{u}\Vert\right\}\right)\right)^{\frac{2}{q}}ds\Bigg),
	\end{align}
	\end{small}
	we obtain
	%\begin{equation}\nonumber
	$\EE\left[\left\Vert\traj{\xi}{x}{\upsilon}(t)-\traj{\ol \xi}{x}{\upsilon}(t)\right\Vert^q\right]\leq h(\sigma,t)$.
	%\end{equation}
	It is not hard to observe that the proposed function $h$ meets the conditions of the lemma.
\end{proof}

\medskip

%Proof of Lemma \ref{lem:moment est}:
\begin{proof}[Proof of Lemma \ref{lem:moment est}]
	In the proof, we use the notation $\sigma\sigma^T(x)$ instead of $\sigma(x)\sigma^T(x)$ for the sake of simplicity. 
	In view of Ito's formula and similar to calculations in Lemma \ref{lemma3}, we have 
	%\begin{small}
	\begin{align}\nonumber
		&\frac{1}{2}\lambda_{\min}(P)\EE\left[\left\Vert\traj{\xi}{x}{\upsilon}(t)-\traj{\ol \xi}{x}{\upsilon}(t)\right\Vert^2\right]\le  \frac{q}{2}\EE\left[ V^{\frac{2}{q}} \left(\traj{\xi}{x}{\upsilon}(t), \traj{\ol \xi}{x}{\upsilon}(t) \right)\right]  \\\notag 
%		&  = \EE \bigg [ \int_0^t 2 \big( \traj{\xi}{x}{\upsilon}(s) - \traj{\ol \xi}{x}{\upsilon}(s) \big)^TP \Big( f\big( \traj{\xi}{x}{\upsilon}(s),\upsilon(s)\big) - \EE \big [ f\big(\traj{\xi}{x}{\upsilon}(s),\upsilon(s)\big) \big] \Big) + \Tr \left( \sigma\left(\traj{\xi}{x}{\upsilon}(s)\right)\sigma^T\left(\traj{\xi}{x}{\upsilon}(s)\right)P \right) ds \bigg] \notag \\ 
		&= \int_0^t \EE \left[  \left( \traj{\xi}{x}{\upsilon}(s) - \traj{\ol \xi}{x}{\upsilon}(s) \right)^TP \left( f \left( \traj{\xi}{x}{\upsilon}(s),\upsilon(s)\right) - f \left( \traj{\ol\xi}{x}{\upsilon}(s),\upsilon(s)\right) \right)+ \frac{1}{2}\Tr \left( \sigma\sigma^T\left(\traj{\xi}{x}{\upsilon}(s)\right)P \right) \right] ds \\\notag
		&= \int_0^t \EE \bigg [ \left( \traj{\xi}{x}{\upsilon}(s) - \traj{\ol \xi}{x}{\upsilon}(s) \right)^TP \left( f \left( \traj{\xi}{x}{\upsilon}(s),\upsilon(s)\right) - f \left( \traj{\ol\xi}{x}{\upsilon}(s),\upsilon(s)\right) \right)\\\notag&\qquad+\frac{1}{2}\Tr\left(\sigma\sigma^T\left(\traj{\xi}{x}{\upsilon}(s) - \traj{\ol \xi}{x}{\upsilon}(s) \right)P \right)+ \frac{1}{2}\Tr\left( \sigma\sigma^T\left(\traj{\xi}{x}{\upsilon}(s)\right)P -  \sigma\sigma^T\left(\traj{\xi}{x}{\upsilon}(s) - \traj{\ol \xi}{x}{\upsilon}(s) \right)P \right)\bigg]ds \notag\\ 
		 \label{lem1} &\leq \int_0^t - \widetilde{\kappa} \EE \left[ V^{\frac{2}{q}} \left(\traj{\xi}{x}{\upsilon}(s), \traj{\ol \xi}{x}{\upsilon}(s) \right)\right]ds\\\notag&\qquad+ \frac{1}{2}\int_0^t \EE \left[ \Tr\left( \sigma\sigma^T\left(\traj{\xi}{x}{\upsilon}(s)\right)P -   \sigma\sigma^T\left(\traj{\xi}{x}{\upsilon}(s) - \traj{\ol \xi}{x}{\upsilon}(s) \right)P \right)\right]ds \\ \label{lem2}
		 &\leq \int_0^t - \widetilde{\kappa} \EE \left[ V^{\frac{2}{q}} \left(\traj{\xi}{x}{\upsilon}(s), \traj{\ol \xi}{x}{\upsilon}(s) \right)\right]ds+ \frac{1}{2}\int_0^t \EE \left[ \Tr\left( \sigma\sigma^T\left(\traj{\xi}{x}{\upsilon}(s)\right)P\right)\right]ds \le \widehat{h}(t,\sigma)\mathsf{e}^{\frac{-2\widetilde{\kappa} t}{q}},  
	\end{align}
	%\end{small}
	where the function $\widehat{h}$ can be computed as
	%\begin{small}
	%\begin{align}
	%\label{h}
		$\widehat{h}(t,\sigma) \Let \frac{1}{2}{\int_0^t \EE \left[ \left\Vert \sqrt{P}\sigma \big(\traj{\xi}{x}{\upsilon}(s)\big)\right\Vert_F^2 \right]ds}$.
	%\end{align}
	%\end{small}
	Inequality \eqref{lem1} is a straightforward consequence of \eqref{nonlinear ineq cond}, and \eqref{lem2} follows from Gronwall's inequality. 
	Using the Lipschitz continuity assumption on the diffusion term $\sigma$, we get:
	%\begin{small}
	\begin{align}\notag
	&\frac{1}{2}\int_0^t \EE \left[ \left\Vert \sqrt{P}\sigma \big(\traj{\xi}{x}{\upsilon}(s)\big)\right\Vert_F^2 \right]ds\leq\frac{1}{2}\left\Vert{\sqrt{P}}\right\Vert^2n\min\{n,p\}Z^2\int_0^t\EE\left[\left\Vert\xi_{x\upsilon}(s)\right\Vert^2\right]ds.
	\end{align}
	%\end{small}
	Since $V^{\frac{2}{q}}$ is a $\delta$-ISS-M$_2$ Lyapunov function, $f(0_n,0_m)=0_n$, $\sigma(0_n)=0_{n\times{p}}$, and using functions $\beta$ and $\gamma$ in (\ref{delta_PISS}), obtained by the $\delta$-ISS-M$_q$ Lyapunov function $V^{\frac{2}{q}}$, one can verify that:
	%\begin{small}
	\begin{align}\nonumber
	\EE\left[\left\Vert\xi_{x\upsilon}(t)\right\Vert^2\right]\leq\frac{n\lambda_{\max}({P})}{\lambda_{\min}({P})}\Vert{x}\Vert^2\textsf{e}^{\frac{-\widetilde{\kappa}{t}}{2}}+\frac{2n\left\Vert{\sqrt{P}}\right\Vert^2L_u^2}{\textsf{e}\widetilde{\kappa}^2\lambda_{\min}({P})}\Vert\upsilon\Vert_\infty^2.
	\end{align}
	%\end{small}
	Therefore, one obtains:
	%\begin{small}
	\begin{align}\nonumber
	\widehat{h}(t,\sigma)&\leq\frac{\left\Vert\sqrt{P}\right\Vert^2n^2\min\{n,p\}Z^2}{\lambda_{\min}({P})\widetilde{\kappa}}\left(\lambda_{\max}({P})\left(1-\textsf{e}^{\frac{-\widetilde{\kappa} t}{2}}\right)\sup_{x\in{D}}\left\{\Vert{x}\Vert^2\right\}+\frac{\left\Vert\sqrt{P}\right\Vert^2L_u^2}{\textsf{e}\widetilde{\kappa}}\sup_{u\in\mathsf{U}}\left\{\Vert{u}\Vert^2\right\}t\right).
	\end{align}
	%\end{small}
	By defining: 
	%\begin{small}
	\begin{align}\label{up_bound2}
	h(\sigma,t)=&\frac{2\left\Vert\sqrt{P}\right\Vert^2n^2\min\{n,p\}Z^2\mathsf{e}^{\frac{-2\widetilde{\kappa}t}{q}}}{\lambda_{\min}^2({P})\widetilde{\kappa}}\left(\lambda_{\max}({P})\left(1-\textsf{e}^{\frac{-\widetilde{\kappa} t}{2}}\right)\sup_{x\in D}\left\{\Vert{x}\Vert^2\right\}+\frac{\left\Vert\sqrt{P}\right\Vert^2L_u^2}{\textsf{e}\widetilde{\kappa}}\sup_{u\in\mathsf{U}}\left\{\Vert{u}\Vert^2\right\}t\right),
	\end{align}
	%\end{small}
	we obtain
	%\begin{small}
	%\begin{equation}\nonumber
	$\EE\left[\left\Vert\traj{\xi}{x}{\upsilon}(t)-\traj{\ol \xi}{x}{\upsilon}(t)\right\Vert^2\right]\leq h(\sigma,t)$.
	%\end{equation}
	%\end{small}
	It is not hard to observe that the proposed function $h$ meets the conditions of the lemma.
\end{proof}

\medskip

%Proof of Corollary \ref{corollary1}:
\begin{proof}[Proof of Corollary \ref{corollary1}]
	Motivated by inequality \eqref{lem1}, one can obtain
	%\begin{footnotesize}
	\begin{align}\nonumber
		\EE &\left[ \Tr\left(  \sigma\sigma^T\left(\traj{\xi}{x}{\upsilon}(s)\right)P  -   \sigma\sigma^T\left(\traj{\xi}{x}{\upsilon}(s) - \traj{\ol \xi}{x}{\upsilon}(s) \right)P \right) \right] \\\notag
		& = \EE \left[ \traj{\xi^T}{x}{\upsilon}(s)  \left( \sum\limits_{i = 1}^{p} \sigma^T_iP\sigma_i  \right)\traj{\xi}{x}{\upsilon}(s)- \left( \traj{\xi}{x}{\upsilon}(s)  - \traj{\ol \xi}{x}{\upsilon}(s) \right)^T \left( \sum\limits_{i = 1}^{p} \sigma^T_iP\sigma_i \right) \left( \traj{\xi}{x}{\upsilon}(s)  - \traj{\ol \xi}{x}{\upsilon}(s) \right)\right] \\\notag
		& \label{rem1} = \traj{\ol \xi^T}{x}{\upsilon}(s)  \left( \sum\limits_{i = 1}^{p} \sigma^T_iP\sigma_i  \right)\traj{\ol \xi}{x}{\upsilon}(s) \le n\lambda_{\max}\left(\sum\limits_{i = 1}^{p} \sigma^T_iP\sigma_i\right) \left\|\traj{\ol \xi}{x}{\upsilon}(s) \right\|^2,  
	\end{align}
	%\end{footnotesize}
	where $\traj{\ol \xi}{x}{\upsilon}$ satisfies the ODE $\dot{\ol{\xi}}_{x\upsilon}(t) = A\ol\xi_{x\upsilon}(t) + B\upsilon(t)$. 
	It can be readily verified that
	%\begin{small}
	\begin{align}
	%\nonumber
		\left\Vert\traj{\ol \xi}{x}{\upsilon}(t)\right\Vert \le \left\Vert\mathsf{e}^{At}\right\Vert\left\Vert x\right\Vert+\left(\int_0^{t}\left\Vert\mathsf{e}^{As}B\right\Vert ds\right)\Vert\upsilon\Vert_\infty\le\left\Vert\mathsf{e}^{At}\right\Vert\sup_{x\in{D}}\left\{\left\Vert x\right\Vert\right\}+\left(\int_0^{t}\left\Vert\mathsf{e}^{As}B\right\Vert ds\right)\sup_{u\in\mathsf{U}}\left\{\Vert{u}\Vert\right\}.
	\end{align}
	%\end{small}
%	Since $\Sigma$ is $\delta$-SISS, it can be seen that all the eigenvalues of $A$ have negative real parts. Therefore, we have
%	\begin{align}
%		\left\Vert\traj{\ol \xi}{x}{\upsilon}(t)\right\Vert \le C_1\textsf{e}^{\lambda_{\max}t}\left\Vert x\right\Vert+C_2\Vert\upsilon\Vert_\infty,
%	\end{align}
%	for some constants $C_1,C_2\in\R^+$, and $\lambda_{\max}$ is the maximum real part among all eigenvalues of $A$. 
	The above approximation, 
	together with \eqref{lem1}, 
	leads to a more explicit bound in terms of the system parameters as follows:
	%\begin{small}
	\begin{align}\label{up_bound3}
	\EE\left[\left\Vert\traj{\xi}{x}{\upsilon}(t)-\traj{\ol \xi}{x}{\upsilon}(t)\right\Vert^2\right] \leq\frac{n\lambda_{\max}\left(\sum\limits_{i = 1}^{p} \sigma^T_iP\sigma_i\right)\mathsf{e}^{-\widehat{\kappa} t}}{\lambda_{\min}({P})}\int_0^t\left(\left\Vert\mathsf{e}^{As}\right\Vert\sup_{x\in{D}}\left\{\left\Vert x\right\Vert\right\}+\left(\int_0^{s}\left\Vert\mathsf{e}^{Ar}B\right\Vert dr\right)\sup_{u\in\mathsf{U}}\left\{\Vert{u}\Vert\right\}\right)^2ds,
	\end{align}	
	%\end{small}
	where $\widehat\kappa=\frac{2\widetilde\kappa}{q}$.
%Note that when we are interested in the dynamics of $\Sigma$ on a compact subset $D\subset\R^n$, one gets the following uniform upper bound:
%%\begin{small}
%\begin{align}\label{up_bound4}
%	&\EE\left[\left\Vert\traj{\xi}{x}{\upsilon}(t)-\traj{\ol \xi}{x}{\upsilon}(t)\right\Vert\right] \leq \sqrt{\frac{n\lambda(\sigma,P)}{\lambda_{\min}({P})}}\cdot\\\notag&\qquad\sqrt{\int_0^t\left(C_1\textsf{e}^{\lambda_{\max}s}\sup_{x\in{D}}\left\{\left\Vert x\right\Vert\right\}+C_2\sup_{u\in\mathsf{U}}\left\{\Vert{u}\Vert\right\}\right)^2ds}\mathsf{e}^{-\kappa t}.
%\end{align}
%\end{small}
\end{proof}

\end{document}